\documentclass{article}
\usepackage{inputenc, amsmath}
\usepackage{amsthm}
\usepackage{xcolor}
\usepackage{amssymb, mathrsfs}
\usepackage{tabularx} 
\usepackage{booktabs} 
\usepackage{enumitem}  
\usepackage{lineno}
\usepackage[affil-it]{authblk}

\newtheorem{theorem}{Theorem}[section]
\newtheorem{defn}[theorem]{Definition}

\newtheorem{obs}[theorem]{Observation}
\newtheorem{fact}[theorem]{Fact}
\newtheorem{cor}[theorem]{Corollary}
\newtheorem{lemma}[theorem]{Lemma}

\newtheorem{claim}[theorem]{Claim}
\newtheorem{example}[theorem]{Example}

\renewcommand{\epsilon}{{\varepsilon}}

\providecommand{\keywords}[1]{\textbf{\textit{Key words.}} #1}
\usepackage{mathtools}
\usepackage{graphics}
\DeclarePairedDelimiter{\abs}{\lvert}{\rvert}

\title{Rainbow cycles in triangle-free graphs}
\author{Andrzej Czygrinow\thanks{School of Mathematical and Statistical Sciences, Arizona State University. Email: {\tt  aczygri@asu.edu}.} and 
Skand Parvatikar\thanks{School of Mathematical and Statistical Sciences, Arizona State University. Email: {\tt  sparvati@asu.edu}.}}

\usepackage{graphicx} 

\usepackage{geometry}
\date{December 2023}

\begin{document}

\maketitle
\begin{abstract}
Let $G = (V,E)$ be an edge-colored graph, and let $\delta^c(G) = \min_{v \in V} \{ d^{c}(v) \}$ where $d^c(v)$ is the number of colors on edges incident to a vertex $v$.  
We show that for a sufficiently large $n$ if $G$ is an edge-colored triangle-free graph of order $n$ that satisfies $\delta^c(G)\geq (n+7)/5$, then $G$ contains a rainbow cycle of length four, which improves a bound of Ding et al. and is best possible. In addition, we show that given $k$, there is $n_0$ such that for $n\geq n_0$, if $G$ is an edge-colored triangle-free graph with $\delta^c(G)> n/5+3$, then $G$ contains a rainbow cycle of length $4k$. 
\end{abstract}

\keywords{rainbow cycles, triangle-free graphs, graph regularity, {stability method}}
\section{Introduction}
An \textit{edge-colored graph} is a graph $H = (V, E)$ with an edge-coloring $c: E \to \mathcal{C}$ where $\mathcal{C}$ is  a set of colors. A subgraph $F$ of $H$ is called \textit{rainbow} if all edges of $F$  have different colors. 
For a vertex $v$, let $d^c(v)$ be the number of colors on the edges incident to $v$, and let $\delta^c(H):=\min_{v\in V(H)} d^c(v)$ denote the minimum color-degree of $H$. H. Li \cite{H-Li} and independently Li et al. \cite{LNXZ} proved that if $H$ is an edge-colored graph on $n$ vertices that satisfies $\delta^c(H)\geq (n+1)/2$, then $H$ contains a rainbow triangle. For longer cycles, it is known (\cite{CMNO-even}, \cite{CMNO-odd}) that for every $k\in \mathbb{Z}^+$ there is $n_0$  such that if $H$ is an edge-colored graph of order  $n\geq n_0$  with $\delta^c(H)\geq (n+1)/2$, then $H$ has a rainbow cycle of length ${2k+1}$ and if $\delta^c(H)\geq (n+5)/3$ then $H$ has a rainbow cycle of length ${2k+2}$. The bounds for the minimum color degree are best possible { by considering a rainbow $K_{n,n}$ and a tail-coloring of a blowup of $C_3$, respectively}. In the case of balanced bipartite graphs of order $n$, if $H$ satisfies $\delta^c(H)\geq n/6+C$ for some constant $C$ then $H$ has a rainbow cycle of length four \cite{CY}. In the current paper we prove an analogous bound for triangle-free graphs.
\v{C}ada et al. \cite{Cada} showed that if $H$ is a triangle-free  graph of order $n$ with minimum color-degree at least $n/3+1$, then $H$ has a rainbow cycle of length four. This was substantially improved by Ding et al. \cite{Ding} who proved the following theorem.
\begin{theorem}\label{thm-ding}[Ding et al.]
    Let $H$ be an edge-colored triangle-free graph of order $n$ such that $\delta^c(H)\geq n/5 +3\sqrt{n}$. Then $H$ has a rainbow cycle of length four.
\end{theorem}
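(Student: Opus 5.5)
We pass to a rainbow spanning subgraph that keeps the colour-degrees, and then double count suitably chosen paths of length two. For every vertex $v$ we keep one edge of each colour at $v$. Let $G'\subseteq H$ be the union over all $v$ of these $d^c(v)$ edges. Then $G'$ is triangle-free, and in $G'$ each vertex $v$ meets at least $d^c(v)\geq \delta:=n/5+3\sqrt n$ edges of distinct colours. A rainbow $C_4$ in $G'$ is a rainbow $C_4$ in $H$. So it suffices to find four vertices $x,a,y,b$ with $xa,ay,yb,bx$ all of different colours.

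The main counting step goes as follows. Fix $v$, and let $N(v)$ be a set of $\delta$ neighbours joined to $v$ by edges of distinct colours. Triangle-freeness makes $N(v)$ independent. For $u\in N(v)$ we likewise choose $\delta$ neighbours of $u$ with distinct colours. We discard at most one neighbour $w$ of $u$ with $c(uw)=c(uv)$, which leaves at least $\delta-1$ ``good'' paths $v\!-\!u\!-\!w$ with $c(vu)\neq c(uw)$ and $w\ne v$. Summing over $u\in N(v)$ gives at least $\delta(\delta-1)$ such cherries from $v$. Their endpoints $w$ lie outside $N(v)\cup\{v\}$, so they lie in a set of size at most $n-1-\delta$. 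Summing over all $v$, we get at least $n\delta(\delta-1)$ good cherries with ordered endpoint pairs $(v,w)$. Suppose two good cherries $v\!-\!u\!-\!w$ and $v\!-\!u'\!-\!w$ have $\{c(vu),c(uw)\}\cap\{c(vu'),c(u'w)\}=\emptyset$. Then they form a rainbow $C_4$. So for a fixed pair $(v,w)$ we must bound how many cherries can be pairwise colour-intersecting. Because $c(vu)$ are distinct over $u$ and $c(u w)$ are distinct over $u$ (using the representative edges at $w$ as well), a pairwise intersecting family is a set of edges in an auxiliary graph on colours, one edge $\{c(vu),c(uw)\}$ per $u$. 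These edges are pairwise intersecting and have no repeated left or right labels. Such a family is a star or a triangle, and it therefore has size at most roughly $2$ plus a "star" part where all share one colour $\alpha$. In the star case, $\alpha=c(vu_0)$ can occur on the $v$ side only once, so essentially all $u$ satisfy $c(uw)=\alpha$ or $c(vu)=\alpha$. I will therefore refine the count by charging star configurations to the colour $\alpha$ at $w$ (or at $v$).

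The crux, and the expected obstacle, is to make this charging yield a codegree-type inequality strong enough to reach $n/5$. The trivial bound (codegree of $(v,w)$ at most $O(1)$) would give only $n\delta^2\lesssim n^2$, i.e. $\delta\lesssim\sqrt n$, which is too good to be true. The point is that monochromatic stars, where many $u$ have $c(uw)=\alpha$, are allowed in $H$ but are destroyed in $G'$ only partially. So I will instead count directly in $H$ via a weighting. I assign each vertex $w$ and colour $\alpha$ the set $S_\alpha(w)$ of $\alpha$-neighbours of $w$, and I show that for a fixed $v$ the sets $N(v)\cap S_\alpha(w)$ contribute at most one useful colour class per $w$. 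This gives $|N(N(v))|$-type bounds: the number of $w$ reached from $N(v)$ by non-star cherries is at least $\delta(\delta-1)/c$, while being at most $n-\delta$. Combining the structure around a vertex with its second neighbourhood, and using the independence of $N(v)$ and of each $N(u)$, I expect to obtain an inequality of the form $\delta^2-O(\delta)\le (n-2\delta)\cdot(\text{something})$ plus the sum of squared star sizes. The $\sqrt n$ error term comes from a Cauchy–Schwarz step on the star sizes $\sum_{w,\alpha}|S_\alpha(w)\cap N(v)|^2$. I anticipate pinning down this final inequality, which should resolve to $5\delta\le n+O(\sqrt n)$, as the hardest and most delicate part. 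It would use the extremal blow-up-of-$C_5$ structure as a guide: each vertex's neighbourhood and second neighbourhood should split into roughly five parts of size $n/5$.
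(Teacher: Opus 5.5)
The paper does not prove this statement. It is quoted from Ding et al., and within the paper it is recovered only for $n\ge n_0$, as a consequence of Theorem~\ref{rainbowC4-thm}. That proof uses the associated digraph $D_H$, digraph regularity with Fact~\ref{fact-di-cycles}, and a stability analysis around blow-ups of a directed $C_5$.

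Taken on its own terms, your proposal has a genuine gap at its first substantive step. Your codegree bound uses that, for fixed $w$, the colours $c(uw)$ with $u\in N(v)$ are distinct. This is false. The edge $uw$ was chosen among the representative edges at $u$, and nothing limits how many edges of one colour $w$ receives. Restricting to edges that are also representative at $w$ would invalidate your count $n\delta(\delta-1)$, which came from out-degrees.

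Example~\ref{ex-C4} shows this concretely; there $G'=H$, so passing to $G'$ gains nothing. For $v\in V_i$ and $w\in V_{i+2}$, the roughly $n/5$ good cherries $v\!-\!u\!-\!w$ with $u\in V_{i+1}$ carry the colour pairs $\{c_u,c_w\}$, all containing $c_w$. Your two claims are also inconsistent: with distinct labels on both sides, a pairwise intersecting family has at most three members, so a large star arises only because the $w$-side labels repeat.

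What your intersecting-family observation actually gives is this: for each pair $(v,w)$, either there are at most three good cherries, or all but one have $c(uw)=\alpha_{v,w}$ for a single colour $\alpha_{v,w}$. In Example~\ref{ex-C4}, where $\delta^c$ is within $O(1)$ of $n/5$ and there is no rainbow $C_4$, essentially every cherry is of the second kind. A lower bound of order $\delta^2$ on ``non-star'' cherries would even force $\delta=O(\sqrt n)$. So no local count can give it, and it would have to use the threshold $n/5$ essentially, i.e.\ it is as hard as the theorem itself.

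The argument meant to replace the codegree bound is not supplied. The weighting by the sets $S_\alpha(w)$, the claim of ``at most one useful colour class per $w$'', the inequality ``$\delta^2-O(\delta)\le (n-2\delta)\cdot(\text{something})$ plus squared star sizes'', and the Cauchy--Schwarz step are announced but neither formulated nor proved. You yourself flag them as the crux.

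Nothing in the proposal shows where the factor $5$ comes from. A count around one vertex sees only $v$, $N(v)$ and the second neighbourhood, whereas $n/5$ reflects the cyclic five-part structure of Example~\ref{ex-C4}. You need a global mechanism with two parts:
\begin{itemize}
\item a way to turn the star structure (each vertex receiving most edges from the preceding layer in one colour) into a digraph statement in which the relevant directed $C_4$'s are rainbow;
\item an argument that a triangle-free oriented graph with out-degrees above $n/5$ has a directed $C_4$.
\end{itemize}
In the paper, for large $n$, the passage from directed to rainbow $C_4$'s is done by Fact~\ref{fact-di-cycles} and regularity, together with the extremal analysis of Section~\ref{ext-section}. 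The digraph side is handled in Section~\ref{non-ext-section}, e.g.\ Lemma~\ref{non-ext-lem01}, which analyses the five sets $N^{\pm}(u_0)$, $N^{\pm}(v_0)$, $C$ for an edge $u_0v_0$ with large $|N^-(u_0)|+|N^-(v_0)|$.

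As it stands, your text is a correct preliminary setup followed by an acknowledged obstacle, not a proof.
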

It is not difficult to see that the bound in Theorem \ref{thm-ding} is asymptotically best possible (Example \ref{ex-C4}). However, the bound for $\delta^c(H)-n/5$ can be further improved. In the current paper, we give such an improvement and  show the following.
\begin{theorem}\label{rainbowC4-thm}
There is $n_0$  such that for every $n\geq n_0$ the following holds. If  $H$ is an edge-colored { triangle-free} graph on $n$ vertices with $\delta^c(H)\geq (n+7)/5$, then $H$ contains a rainbow cycle of length four.
\end{theorem}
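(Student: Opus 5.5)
We argue by the stability method. Suppose $H$ is an edge-colored triangle-free graph on $n$ vertices with $\delta^c(H)\ge (n+7)/5$ and no rainbow $C_4$. First we pass to a spanning subgraph $G\subseteq H$ in which every vertex $v$ keeps exactly one edge of each color present at $v$. Then $d_G(v)\ge (n+7)/5$, $G$ is triangle-free, and $G$ still has no rainbow $C_4$. For this $G$ we prove a stability statement: for every $\epsilon>0$ and $n$ large, if $G$ has no rainbow $C_4$ and $\delta^c\ge n/5$, then $V(G)$ can be partitioned into $V_1,\dots,V_5$ with $|V_i|=(1/5\pm\epsilon)n$ such that all but $\epsilon n^2$ edges go between cyclically consecutive classes $V_i,V_{i+1}$. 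In other words, $G$ is close to a blow-up of $C_5$, which is the extremal configuration of Example~\ref{ex-C4}.

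\textbf{Step 1 (stability).} We apply the regularity lemma to $G$, or to the union of color classes. In the reduced graph $R$, the condition of no rainbow $C_4$ together with triangle-freeness forces strong restrictions.
\begin{enumerate}
\item A dense regular pair whose edges use many colors can host a rainbow $C_4$ once the pair has a vertex-disjoint second neighbor structure. Counting rainbow $C_4$'s via the standard regularity counting lemma applied to "colorful" pairs should show that $R$ is essentially $C_4$-free in the colorful sense.
\item Inheriting the minimum degree $\approx n/5$, $R$ is a triangle-free graph on $t$ vertices with minimum degree about $t/5$ and no $C_4$ built from colorful pairs.
\item By a counting argument on $\sum_v \binom{d(v)}{2}$ over common neighborhoods, combined with triangle-freeness (neighborhoods are independent), the only near-extremal structure is the blow-up of $C_5$.
\end{enumerate}
Alternatively, one could reprove a robust version of Theorem~\ref{thm-ding}: the slack $3\sqrt n$ there comes from a double counting of paths of length two. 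Redoing that count with $\delta^c=n/5+o(n)$ shows that equality must nearly hold everywhere, and that near-equality yields the $C_5$-blow-up structure.

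\textbf{Step 2 (exact structure).} Starting from the approximate partition, we clean up. Move each vertex to the class where it has most neighbors and identify a small set of atypical vertices. Using the minimum color degree $(n+7)/5$, we show that typical vertices in $V_i$ have almost all their colors into $V_{i-1}\cup V_{i+1}$. In such a near-complete bipartite structure between consecutive classes, a rainbow $C_4$ $x y x' y'$ with $x,x'\in V_i$ and $y,y'\in V_{i+1}$ is avoided only if the coloring is very special. Essentially, it must be a "tail/star"-type coloring in which edges at each vertex in one direction share few colors. Exploiting this, we then show that every vertex has about $n/5$ distinct colors available only if class sizes and color multiplicities satisfy exact counting identities. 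Summing color degrees around the five classes then gives $\sum_v d^c(v)\le (n+6)n/5$ or so, which contradicts $(n+7)/5$. This is where the constant $7$ comes from, and it matches the extremal example.

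\textbf{Main obstacle.} Step 2 is the main obstacle, and in particular handling the exceptional vertices and the precise color accounting between consecutive classes. An exceptional vertex can have neighbors in non-adjacent classes, but triangle-freeness together with its large color degree forces it into some class pattern. We must show that it cannot be used to rescue the bound. We first try a local lemma: any vertex $v$ with at least $\epsilon n$ colors going to each of two classes $V_i,V_j$ produces a rainbow $C_4$ through typical vertices. The proof would pick two neighbors of $v$ in the same class and a common neighbor among many typical vertices, then choose colors greedily, which is possible since color degrees are linear. This leaves only an $O(1)$-sized error, which the exact count in Step 2 must absorb.
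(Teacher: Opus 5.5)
Your overall architecture (stability via regularity, then an exact analysis near a blow-up of $C_5$) matches the paper's. However, neither step contains the idea that makes it work, and the key claim of Step~1 (items 1--2) is false.

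\textbf{Step 1.} An undirected reduced graph with a symmetric notion of a ``colorful'' pair cannot detect the obstruction, as the extremal configuration shows. In the tail coloring of Example~\ref{ex-C4}, a vertex of $V_{i+1}$ sees essentially one color towards $V_i$, while a vertex of $V_i$ sees pairwise distinct colors towards $V_{i+1}$.
\begin{itemize}
\item If a pair counts as colorful when it carries many colors, every dense pair there is colorful. The reduced graph is then full of $C_4$'s (two clusters in $V_i$, two in $V_{i+1}$), yet there is no rainbow $C_4$.
\item If you demand many colors at the vertices of both sides, no pair qualifies, and the degree bound $\approx t/5$ is not inherited.
\end{itemize}
Also, a subgraph in which every vertex keeps \emph{exactly} one edge of each of its colors need not exist, e.g.\ for a monochromatic star.

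The missing idea is to orient. Let each $v$ choose $d^c(v)$ out-neighbours along edges of distinct colors, giving $D_H$ with $\delta^+(D_H)=\delta^c(H)$. A non-rainbow directed $\ell$-cycle in $D_H$ has two arcs of equal color. Since the arcs leaving a vertex have distinct colors, the head of the second arc is then determined, so there are only $O(n^{\ell-1})$ such cycles (Fact~\ref{fact-di-cycles}). Hence a directed $C_4$ or a $2$-cycle in the reduced \emph{digraph} yields $\Omega(n^4)$ directed $C_4$'s in $D_H$, and so a rainbow $C_4$.

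The stability statement you actually need is: an oriented triangle-free digraph with $\delta^+\ge(1/5-\epsilon)m$ and no directed $C_4$ is close to a blow-up of the directed $C_5$. This is the entire non-extremal part of the paper, and it is not a cherry count. One first forces the in-degrees to be $\approx m/5$ as well (Lemma~\ref{non-ext-lem01} and the defect Cauchy--Schwarz argument of Lemma~\ref{lem2-non-ext-part1}). One then analyses $N^{\pm}(u_0),N^{\pm}(v_0)$ for an arc $u_0v_0$, splitting on the size of $E_D(B_1,B_0)$. Appealing to a ``robust'' version of Ding et al.'s count is not an argument as it stands.

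\textbf{Step 2.} This step states the goal rather than proving it, and the accounting you propose fails. The inequality $\sum_v d^c(v)\le n(n+6)/5$ is false for near-extremal triangle-free graphs with no rainbow $C_4$. Take the blow-up of the directed $C_5$ with classes $W_0,\dots,W_4$, and color each edge $uv$ with $u\in W_i$, $v\in W_{i+1}$ by $v$. No vertex of a rainbow cycle can be the head of both its cycle edges, so a rainbow cycle is a directed cycle and has length divisible by $5$. With $|W_i|=n/5+t,\,n/5+t,\,n/5-t,\,n/5-t,\,n/5$ one gets $\sum_v d^c(v)=\sum_i|W_i|(|W_{i+1}|+1)=n^2/5+t^2+n$. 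This exceeds $n(n+7)/5$ as soon as $t^2>2n/5$, e.g.\ $t\approx\sqrt n=o(n)$.

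So the minimum color degree must be applied locally to well-chosen vertices. Everything then hinges on the $O(1)$ colors a typical $v\in V_i$ can have besides its $|V_{i+1}|$ forward colors and its one incoming color. Controlling these is the bulk of the paper:
\begin{itemize}
\item a dominant color $c_v$ on the edges from $V_{i-1}$ into almost every $v\in V_i$, with these colors distinct within a class;
\item backward ``special'' edges $vw$ ($w\in V_{i-1}$, $c(vw)\ne c_v$), either of type~1 ($c_w=c_v$; these form matchings, like the $\alpha$-matching in Example~\ref{ex-C4}) or of type~2, the latter ruled out on large sets via rainbow connectors and the parameter $M$;
\item the bound $|V_i|\ge\lfloor n/5\rfloor$;
\item a case analysis on $n \bmod 5$, where $n\equiv 3$ with sizes $(n+2)/5,(n-3)/5,(n+2)/5,(n-3)/5,(n+2)/5$ needs further claims about the exceptional vertices.
\end{itemize}
Your local lemma handles only vertices with $\epsilon n$ deviant colors. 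The residual $O(1)$ error you defer to ``the exact count'' is precisely the part that remains unproved.
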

The proof relies on the regularity and stability  method, also used in \cite{CMNO-even}, and is split into two cases. In the first case, called the non-extremal case, we show that $H$ has $\Omega(n^3)$ rainbow triangles and in the second, the extremal case,  we analyze the structure that arises form the example showing tightness. Although the argument is based on the same general outline as in \cite{CMNO-even}, triangle-free graphs pose unique challenges that must be carefully addressed, and the extremal case in particular is quite delicate. The bound for the minimum color-degree in Theorem \ref{rainbowC4-thm} is best possible as seen from the following example.
\begin{example}\label{ex-C4}
Let $n \bmod 5 = 4$, and consider four pairwise disjoint sets $V_0,V_1, V_3, V_4$ each of size $(n+1)/5$ and one, $V_2$, of size $(n-4)/5$. Define the following digraph $D$. For every vertex $v\in V_i$ put the edge $vw$ for every $w\in V_{i+1}$. Now consider the edge-colored graph $H$ obtained from $D$ in two steps. First, for every edge $uv$ in $D$ color $uv$ with color $v$. Second, fix a perfect matching $M$ between $V_0$ with $V_1$ and  for every $vv'\in M$, recolor all edges of color $c_{v'}$ with  $c_v$, and recolor $vv'$ with $\alpha$ where $\alpha$ is a new color.
\begin{figure}[h] 
    \centering
    \includegraphics[width=0.9\textwidth]{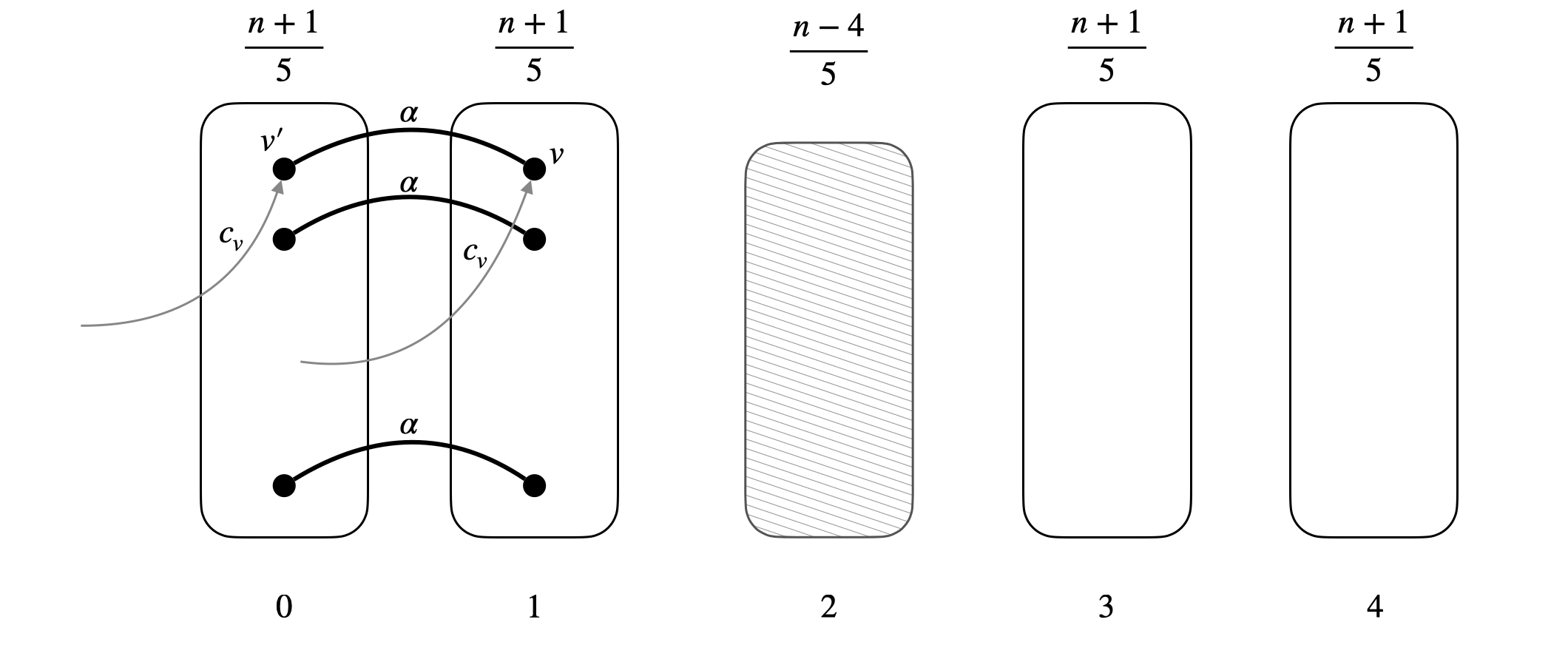} 
    \caption{Example \ref{ex-C4}}
    \label{fig:Lem-Example}
\end{figure}

If $v\in V_i$ for $i=0,2,3, 4$, then $v$ is incident to edges $vw$ of $(n+1)/5$ colors for each $w\in V_{i+1}$ and at least one edge of color $c_v$. Thus $d^c(v)=\frac{n+1}{5}+1$.
If $v\in V_1$, then $v$ is incident to edges $vw$ of $(n-4)/5$ colors for each $w\in V_2$, edges of color $c_v$, and color $\alpha$. Thus $d^c(v)=\frac{n-4}{5}+2$.
Thus $\delta^c(H)= \frac{n+6}{5}$.

Suppose $H$ has a rainbow cycle of length four. Then, since $D$ does not have directed cycles of length four, the cycle has exactly one edge of color $\alpha$.  Suppose $xvv'y$ is a rainbow $C_4$ with $v\in V_0$, $v'\in V_1$, and $\alpha =c(vv')$.  If $y\in V_2$, then $x\in V_1$ and so $c(xy)=c_y=c(v'y)$. Thus $y\in V_0$ and $c(yv')=c_{v'}$. If $x\in V_4$, then $c(xv)=c_v=c_{v'}=c(yv').$ Thus $x\in V_1$ and $c(vx)=c_x$. Then however $c(xy)\in \{\alpha, c_x\}$, and the cycle is not rainbow.

\end{example}
 The bound  in Theorem \ref{rainbowC4-thm} is not enough to guarantee rainbow cycles of any fixed even length. For example, for a rainbow cycle of length six, $\delta^c(H)> n/4$ is needed even when $H$ is bipartite. At the same time, the bound is almost enough to give any length that is a multiple of four. Specifically, we show that an edge-colored triangle-free graph $H$ of order $n$ contains a rainbow cycle of length $4k$ provided $\delta^c(H)> n/5+ 3$  and $n$ is large enough with respect to $k$. 
\begin{theorem}\label{rainbowgeneral-thm}
For every $k\in \mathbb{Z}^+$ there is $n_0\in \mathbb{Z}^+$  such that for every $n\geq n_0$ the following holds. If $H$ is an edge-colored triangle-free graph with $\delta^c(H)>n/5 +3$, then $H$ contains a rainbow $C_{4k}$.
\end{theorem}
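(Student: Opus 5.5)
We will follow the regularity–stability scheme used for Theorem \ref{rainbowC4-thm}, reducing the problem of finding a rainbow $C_{4k}$ to finding many rainbow copies of a short "gadget" that can be chained together. The argument splits into a non-extremal case and an extremal case according to how close $H$ is to the construction of Example \ref{ex-C4}.

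We first pass to a convenient subgraph. For each vertex $v$ we keep one edge of each colour at $v$, obtaining a spanning subgraph $H'\subseteq H$ in which every vertex has degree at least $\delta^c(H)>n/5+3$. We then apply the degree form of the Szemerédi regularity lemma to $H'$ with parameters $\epsilon\ll d\ll 1/k$. The resulting reduced graph $R$ on $t$ clusters inherits minimum degree roughly $(1/5-2d)t$. Since $H$ is triangle-free, the reduced graph contains only few triangles after the standard cleaning, so we may treat $R$ as essentially triangle-free.

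\textbf{Non-extremal case.} In the non-extremal case, the key claim is that $R$ contains a closed walk of length $4$ (a $C_4$, or an edge traversed as a "degenerate" walk) whose regular pairs carry many rainbow homomorphic copies. More precisely, we aim to show that $H$ contains $\Omega(n^4)$ rainbow $C_4$'s, or $\Omega(n^3)$ rainbow paths of length two closing appropriately, as in the proof of Theorem \ref{rainbowC4-thm}. This works unless $R$ is close to a blow-up of $C_5$ with the extremal colouring. Once many rainbow $C_4$'s are found in a dense regular configuration $V_1V_2V_3V_4$ of clusters, a rainbow $C_{4k}$ is produced greedily. We wind $k$ times around the $4$-cycle of clusters, choosing each new vertex from a typical set. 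At each step we need to avoid the at most $4k$ colours already used; this forbids only $O(k)$ edges at each vertex under the colour-degree condition, so the embedding lemma with $n$ large relative to $k$ completes the cycle. To make the colours distinct, we first extract from each pair a "locally rainbow" dense subgraph. Here we use the property that each vertex sees each colour on at most a few edges of $H'$, since colours are distinct at every vertex of $H'$. Then, when embedding a path of length $4k$, each already used colour excludes at most one neighbour of the current vertex.

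\textbf{Extremal case.} In the extremal case, $H$ is $\epsilon$-close to the blow-up of $C_5$ with parts $V_0,\dots,V_4$ of size about $n/5$. The colour structure is then forced to be close to that of Example \ref{ex-C4}: most edges from $V_i$ to $V_{i+1}$ are coloured by their head, except for a small number of exceptional colour classes. The threshold $n/5+3$ means every vertex has at least about $3$ more colours than this near-"tail" colouring supplies. We first clean the structure, placing atypical vertices into parts where they have many neighbours. We then show that the surplus colours force some vertex $v$ to have an edge $e$ whose colour breaks the head-colouring pattern. The next step is to build a rainbow $C_{4}$-like "switch" using $e$. A $C_5$-blow-up alone has no closed walks of length $4k$ that are rainbow in the tail colouring, since winding gives length divisible by $5$. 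So we use one or two exceptional edges, in the spirit of the $\alpha$ edge in Example \ref{ex-C4}, together with edges inside parts or backward edges, to create a closed walk of length $4k$ whose projection has the right residue. The long remaining segments are then completed by paths going around the $C_5$ structure with fresh head colours.

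\textbf{Main obstacle.} We expect the extremal case to be the main obstacle. It requires showing that the extra $+3$ colour-degree guarantees enough exceptional edges in suitable positions. These edges must be usable simultaneously without colour clashes, and they must allow the length to be adjusted modulo $5$ to reach $4k$. We would first try to find two disjoint exceptional edges and a case analysis on which pairs $(V_i,V_j)$ they lie in, mirroring the extremal analysis for Theorem \ref{rainbowC4-thm}.
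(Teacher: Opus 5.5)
There is a genuine gap, and it starts with the undirected reduction. You rely on the claim that ``colours are distinct at every vertex of $H'$'', and that claim is false. An edge $uv$ lies in $H'$ because $u$ chose it as its representative of $c(uv)$; $v$ has no control over it. Take the tail colouring of a balanced blow-up of $C_5$ (Example~\ref{ex-C4} before the recolouring). There every $u\in V_{i-1}$ keeps all its edges into $V_i$, so each $v\in V_i$ has about $n/5$ edges of $H'$, all of colour $c_v$. Requiring both endpoints to keep an edge does not help: it leaves only $O(n)$ edges.

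Consequences of this:
\begin{itemize}
\item The colour-degree condition bounds no colour multiplicities, so your greedy step (each used colour excludes $O(1)$ neighbours) fails.
\item Regular pairs need not contain any dense ``locally rainbow'' subgraph.
\item Your non-extremal case is a restatement of the theorem rather than a step. An undirected reduced graph carries no colour information, and every edge of $R$ already gives $\Theta(n^4)$ copies of $C_4$. The tail-coloured blow-up above has such pairs, yet no rainbow $C_{4k}$ with $5\nmid 4k$.
\end{itemize}

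The missing idea is orientation. In the associated digraph $D_H$, with an arc $v\to w$ for each representative $w\in N^c(v)$, the out-arcs at every vertex have distinct colours. Hence all but $O(n^{l-1})$ directed $C_l$'s are rainbow (Fact~\ref{fact-di-cycles}). A directed $C_4$ or a $2$-cycle in the reduced \emph{digraph} then gives $\Omega(n^{4k})$ directed $C_{4k}$'s, and so a rainbow one. The non-extremal case becomes a stability theorem you would still have to prove: an oriented triangle-free digraph with $\delta^+\ge(1/5-\xi)m$ and no directed $C_4$ contains a near-complete blow-up of a directed $C_5$. The paper needs a defect Cauchy--Schwarz argument to force in-degrees near $m/5$ as well. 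It then analyses $N^\pm(u_0)$ and $N^\pm(v_0)$ for an arc $u_0v_0$, splitting on the density of arcs from $N^+(v_0)$ to $N^+(u_0)$.

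Your extremal plan is also missing its mechanism, and its first step cannot suffice. In Example~\ref{ex-C4}, every vertex of $V_0\cup V_1$ already has an edge breaking the head-colouring pattern. For $vv'\in M$, the arc $v'\to v$ runs backward with colour $\alpha\neq c_{v'}$ and $c_{v'}=c_v$; in the paper's terms it is special of type~$1$. Yet there is no rainbow $C_4$.

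What is needed is an accounting of where the surplus colours go. The paper proceeds as follows:
\begin{itemize}
\item It passes to cores $V_i'$ in which each $v$ has a dominant in-colour $c_v$, and puts the few remaining vertices into sets $V_i''$.
\item It splits backward (special) arcs into type~$1$, which form a matching, and type~$2$.
\item Many type-$2$ special arcs, or many arcs from $V_i'$ into $V_{i+2}''$, yield \emph{extendable} rainbow cycles of lengths $4,8,12,16$. Wrapping once around $V_0',\dots,V_4'$ adds $5$ to any of these, so every residue of $4k$ modulo $5$ is covered.
\item This forces the maximal type-$2$ special degree $M$ to be at most $1$, $|V_i'\cup V_i''|<n/5+20$, and $|Z_{i+2}|<50$. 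Here $Z_{i+2}$ is the set of vertices of $V_{i+2}''$ hit by an arc from $V_i'$.
\end{itemize}
Only then is the $+3$ used, and it is used globally. Let $X_i$ be the vertices of $V_i'$ with no neighbour in $Z_{i+1}$. Choose $v_i\in X_i$ with at most three out-arcs back into $V_{i-1}'\cup V_{i-1}''$. This gives $n/5+3<3+|V_{i+1}'\cup V_{i+1}''|+|Z_{i+2}|-|Z_{i+1}|$, and summing over $i=0,\dots,4$ yields $n<n$.

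Nothing in your plan stops the surplus colours from being absorbed by type-$1$ matchings, by $O(1)$ part-size imbalances, or by vertices of $V_i''$. A case analysis on two disjoint exceptional edges does not address this.
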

On one hand, the proof of  Theorem \ref{rainbowgeneral-thm} is obtained by largely extending the methods for Theorem \ref{rainbowC4-thm}. On the other, additional observations are needed in the extremal case to take care of the remaining values of $4k \bmod 5 $.

The bound for $\delta^c(H)-n/5$  in Theorem \ref{rainbowgeneral-thm} is certainly not best possible. However,  the constant from Theorem \ref{rainbowC4-thm} does not seem to be enough to guarantee the existence of a rainbow cycle of length eight.
The rest of the paper is structured as follows. In Section \ref{prelim-section} we discuss preliminaries and  background related to the regularity lemma. Among others, we introduce the concept of a digraph associated with a colored graph $H$ that will be used to distingiush between the two cases mentioned before. In Section \ref{non-ext-section}, that consists of two parts, we discuss the non-extremal case.  The purpose of the first part of this section is to argue that we may assume that almost all vertices in the digraph associated with $H$ have the out-degree and in-degree approximately equal. The second part contains the proof of the fact that either $H$ contains desired rainbow cycles or the digraph has the special structure that arises from Example \ref{ex-C4}. Finally, in Section \ref{ext-section}, we analyze this special structure and using the bound for the minimum color-degree, we show that $H$ has a rainbow cycle of length $4k$.

\section{Preliminaries}\label{prelim-section}
A directed graph $D=(V, E)$ is a pair such that $V$ is a finite set and $E\subseteq \{(u,v)| u,v\in V, u\neq v\}$. A directed graph $D$ is called an {\it oriented graph} (or an {\it orientation}) if for every $u,v\in V(D)$ at most one of $uv, vu$ is in $E(D).$ 
A {\it directed cycle of length $l$} is an orientation of a cycle on $l$ vertices such that every vertex has its in- and out-degree equal to one. We say that an edge-colored graph $H$ has a {\it rainbow $C_l$} if  $H$ has a cycle of length $l$ with all edges of different colors. Similarly, we say that a directed graph $D$ has a {\it $C_l$}, if $D$ contains a directed cycle of length $l$. 
For two (not necessarily disjoint) sets of vertices $A, B$, let $E_D(A,B)$ denote the set of edges in $D$ that start at a vertex from $A$ and end at a vertex from $B$.

For a graph (or a digraph) $G=(V,E)$ we use $|G|:=|V|$ and $\| G\| = |E|$. 

\begin{defn}
Let $D$ be a 5-partite directed graph with vertex-partition $W_0,\dots, W_4$, let $\mu\in (0,1)$ and $n\in \mathbb{Z}^+$. We say that $D$  contains a $(n, \mu)$-blow-up of a directed cycle of length five if for every $i\in \{0,\dots, 4\}$, $(1-\mu)n \leq |W_i|\leq (1+\mu)n$  and $|E_D(W_i, W_{i+1})|\geq (1-\mu)|W_i|\cdot |W_{i+1}|.$  
\end{defn}
A directed graph $D$ is {\it triangle-free} if the underlying graph of $D$ has no cycle of length three. 
Let $H=(V,E)$ be an edge-colored graph. For a vertex $v$, let $N^c(v)$ denote a maximal set of vertices $w$ such that $w$ is a neighbor of $v$ in $H$ and all colors $c(vw)$ for $w\in N^c(v)$ are different. Define an {\it associated digraph $D_H$} as follows. Let $V(D_H):=V$ and let $E(D_H)$ contain all (directed) edges $vw$ for every $v\in V$  and  $w\in N^c(v).$  Note that $D_H$ depends on the choices of $N^c(v)$, and that it is not necessarily an orientation. We fix one choice of $D_H$ and record the following fact from \cite{CMNO-even}. 
\begin{fact}[\cite{CMNO-even}]\label{fact-di-cycles}
Let $l\in \mathbb{Z}^+$. If $H$ is an edge-colored graph on $n$ vertices that does not contain a rainbow cycle of length $l$ and $D_H$ is an associated digraph, then $D_H$ has at most $n^{l-1}$ directed cycles of length $l$.    
\end{fact}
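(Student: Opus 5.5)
The plan is to bound the number of directed $l$-cycles in $D_H$ by injecting them into a small set: we charge each directed cycle of $D_H$ to a non-rainbow undirected $l$-cycle of $H$ together with a repeated pair of edges, and then count.

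Let $C = v_1v_2\cdots v_lv_1$ be a directed cycle of length $l$ in $D_H$, with indices taken mod $l$. Each arc $v_iv_{i+1}$ comes from $v_{i+1}\in N^c(v_i)$, so $v_iv_{i+1}$ is an edge of $H$. Hence the underlying vertex sequence is a cycle of length $l$ in $H$. When $l\ge 3$ its vertices are distinct, since a directed cycle has distinct vertices. Because $H$ has no rainbow $C_l$, two edges of this cycle share a color: there are $i\neq j$ with $c(v_iv_{i+1})=c(v_jv_{j+1})$.

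The key observation uses the definition of $N^c$. For a fixed tail $v$, the colors $c(vw)$ with $w\in N^c(v)$ are pairwise distinct, so two arcs of $D_H$ with the same tail never have the same color. This holds here automatically, since $v_i\neq v_j$ when $i\neq j$. More useful is the following. Among the repeated colors, fix the pair $i<j$ chosen by some canonical rule, for example lexicographically first. Then $C$ is determined by the vertices $v_1,\dots,v_l$ with $v_{j+1}$ deleted, together with the indices $i,j$, because $v_{j+1}$ can be recovered. Indeed, $v_{j+1}$ is the unique $w\in N^c(v_j)$ with $c(v_jw)=c(v_iv_{i+1})$, and it is unique because the colors on arcs out of $v_j$ are distinct. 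So each directed cycle is encoded by a sequence of $l-1$ vertices plus a pair $(i,j)$.

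This encoding gives at most $\binom{l}{2}n^{l-1}$ cycles, so we must still remove the factor $\binom{l}{2}$. This is the main obstacle. To get exactly $n^{l-1}$, I would make the choice of $(i,j)$ recoverable from the remaining data. First fix the position to delete: rotate $C$ so that the recovered vertex always sits in the last slot. That is, write the cycle as $(u_1,\dots,u_l)$ where $u_l$ is the head of the chosen repeated-color arc $u_{l-1}u_l$. The decoder sees $u_1,\dots,u_{l-1}$ and knows $u_l$ is determined by $u_{l-1}$ and a color. That color must be the color of some earlier edge $u_ku_{k+1}$ with $k\le l-2$, or of the closing edge $u_lu_1$.

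I would try to let the decoder simply test each candidate: for each $k$, take the $w\in N^c(u_{l-1})$ with $c(u_{l-1}w)=c(u_ku_{k+1})$. The difficulty is uniqueness across different $k$. To handle it, I would choose the rotation and the pair minimally, say the first repeated color in cyclic order, so that the edges $u_1u_2,\dots,u_{l-2}u_{l-1}$ all have distinct colors. Then only one color can produce a valid cycle consistent with the minimality rule, and the decoding is unique.

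If exact uniqueness proves too delicate, the fallback is the bound $l^2n^{l-1}$. That bound suffices for all applications, since only $o(n^l)$ is needed. Both the injection and the counting are then routine.
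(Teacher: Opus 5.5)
\paragraph{Verdict: genuine gap.} The paper does not prove this Fact itself; it quotes it from \cite{CMNO-even}. So the question is whether your argument gives the stated bound with constant $1$, and it does not. The step meant to remove the factor $\binom{l}{2}$ fails in two ways.

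First, the rotation you ask for need not exist. For $l\ge 4$, if all arcs of the directed cycle have one colour, no $l-2$ consecutive arcs are rainbow. The same happens for $l=6$ with colours $a,a,b,b,c,c$ in cyclic order.

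Second, and decisively, the decoding is not unique even when $u_1\cdots u_{l-1}$ is rainbow. The decoder faces up to $l-2$ candidates for $u_l$, namely the out-neighbour of $u_{l-1}$ of colour $c(u_ku_{k+1})$. Several of these can close genuine, distinct cycles, and a canonical choice made one cycle at a time cannot stop two of them from receiving the same code.

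Here is a concrete instance. Let $H=K_{2,4}$ with parts $\{u_1,u_3\}$ and $\{u_2,w,w',z\}$. Colour it by $c(u_1u_2)=a$, $c(u_1w)=c(u_1w')=c(u_1z)=x$, $c(u_3w)=a$, and $c(u_3u_2)=c(u_3w')=c(u_3z)=b$, with $a,b,x$ distinct. Take $N^c(u_1)=\{u_2,z\}$ and $N^c(u_3)=\{w,w'\}$; every other vertex keeps both of its neighbours. One checks that all six $4$-cycles of $H$ repeat a colour, so $H$ has no rainbow $C_4$.

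Call $(y_1,y_2,y_3)$ a code of the directed cycle $y_1y_2y_3y_4y_1$ if $c(y_1y_2)\neq c(y_2y_3)$ and $c(y_3y_4)\in\{c(y_1y_2),c(y_2y_3)\}$; this is exactly what your rule outputs. Then:
\begin{itemize}
\item $u_1u_2u_3w'u_1$ has the single code $(u_1,u_2,u_3)$;
\item $u_3wu_1zu_3$ has the single code $(u_3,w,u_1)$;
\item $u_1u_2u_3wu_1$ has exactly these two codes.
\end{itemize}
So two of these cycles must collide. The Fact itself is not contradicted here, since $D_H$ has only four directed $4$-cycles; it is your encoding that fails to be injective. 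Hence the claim that ``only one color can produce a valid cycle consistent with the minimality rule'' is false. Reaching constant $1$ for $l\ge 4$ would need a genuinely global argument, which the proposal does not supply.

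\paragraph{A repair needed even for the fallback.} Your basic encoding needs a fix before it gives the fallback bound. If the chosen pair is $(i,j)=(1,l)$, then $v_{j+1}=v_1=v_i$. Deleting $v_{j+1}$ then removes the arc $v_1v_2$ whose colour the decoder needs. In that case delete $v_2$ instead, and recover it as the out-neighbour of $v_1$ of colour $c(v_lv_1)$.

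With this fix you get at most $\binom{l}{2}n^{l-1}$ labelled cycles. Each directed $l$-cycle has exactly $l$ rotations, so there are at most $\frac{l-1}{2}n^{l-1}$ directed $l$-cycles. This is the stated bound for $l=3$, but only an $O(n^{l-1})$ bound for $l\ge 4$.

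That weaker form is all the paper ever uses: the Fact is applied only to digraphs with $\Omega(n^{l})$ directed $l$-cycles, so any constant works. Your corrected fallback therefore suffices for the paper's purposes, but it is not a proof of the Fact as stated.

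Restricting to $l\ge 3$, as you do, is also necessary. If digons counted as directed $2$-cycles, a properly coloured $K_n$ would give $\binom{n}{2}$ of them.
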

Throughout the rest of the paper we suppose that $H$ is an edge-minimal counter-example subject to the bound on the minimum-color degree in Theorem \ref{rainbowC4-thm} (or Theorem \ref{rainbowgeneral-thm}).
Consequently, $H$ has not monochromatic path of length three. This observation will be used extensively in Section \ref{ext-section}.

The proofs of both theorems, Theorem \ref{rainbowC4-thm} and Theorem \ref{rainbowgeneral-thm}, will be split into non-extremal and extremal cases, based on the structural properties of $D_H$. In the non-extremal case, we use the reduced digraph of $D_H$ (defined below) and show that unless $D_H$ has special structure, the reduced digraph contains a directed cycle of length four. Then for a fixed $k\in \mathbb{Z}^+$, $D_H$ contains $\Omega(n^{4k})$ directed cycles of length $4k$ by simple counting, and so, in view of Fact \ref{fact-di-cycles}, $H$ contains a rainbow $C_{4k}$. The other option for $D_H$, as we will show in the next section, is to have the following structure. 
\begin{defn}\label{ext-def}Let $\eta>0$. 
We say that $D_H$ is $\eta$-extremal if there exist five pairwise-disjoint sets $V_0, V_1, V_2, V_3, V_4$ such that 
\begin{itemize}
\item $(1-\eta)n/5\leq |V_i|\leq (1+\eta)n/5$ for every $i=0, \dots, 4$, and
\item for every $i$ and every $v\in V_i$,
$$|N^+_{D_H}(v) \cap V_{i+1}|\geq (1-\eta)|V_{i+1}|,$$
$$|N^-_{D_H}(v) \cap V_{i-1}|\geq (1-\eta)|V_{i-1}|.$$
\end{itemize}
 \end{defn}
We continue with a discussion of the concepts that will be used to define the reduced digraph.
Let $D=(V,E)$ be a digraph and let $V_1, V_2$ be two disjoint non-empty subsets of $V$. The {\it density} of the ordered pair $(V_1,V_2)$ is 
\[d_D(V_1,V_2):=\frac{|E_D(V_1, V_2)|}{|V_1|\cdot |V_2|}.\]
We say that $(V_1,V_2)$ 
is \textit{$\epsilon$-regular with density $d$} if for every $V_1'\subseteq V_1$ and $V_2'\subseteq V_2$ with $|V_i'|\geq \epsilon |V_i|$, 
$|d_{D}(V_1', V_2')|-d|\leq \epsilon.$
A partition $V_0, V_1, \dots, V_t$ of $V$ is called \textit{$\epsilon$-regular} if $|V_0|\leq \epsilon |V|$, $|V_i|=|V_j|$ for $i,j\geq 1$, and all but at most $\epsilon t^2$ pairs $(V_i, V_j)$ with $i, j\geq 1$ are $\epsilon$-regular. 
A modification of the regularity lemma of Szemer\'{e}di \cite{szem} for directed graphs was obtained by Alon and Shapira \cite{alon-shapira}. 
\begin{lemma}[The digraph regularity lemma]\label{reg-lemma}
    Let $\epsilon \in (0,1)$ and let $t\in \mathbb{Z}^+$. Then there exist integers $T$ and $n_0$ such that every directed  graph $D=(V, E)$ on at least  $n_0$ vertices admits an $\epsilon$-regular partition $V_0, V_1,\dots, V_m$, with $t \leq m\leq T.$
\end{lemma}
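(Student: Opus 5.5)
We follow Szemer\'{e}di's original energy-increment argument \cite{szem}, with ordered pairs in place of unordered ones. Fix $D=(V,E)$ with $|V|=n$. For a partition $\mathcal{P}=\{V_0,V_1,\dots,V_m\}$ with exceptional class $V_0$, define the index
\[
q(\mathcal{P}) := \sum_{i\neq j,\ i,j\geq 1} \frac{|V_i||V_j|}{n^2}\, d_D(V_i,V_j)^2 ,
\]
where the sum runs over \emph{ordered} pairs. Since every density lies in $[0,1]$, we have $0\le q(\mathcal{P})\le 1$. The plan has three steps.

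First, we start from an arbitrary equipartition into $m_0\ge t$ classes, with at most $m_0$ leftover vertices put into $V_0$; taking $m_0\ge t$ guarantees $m\ge t$ at the end. Second, we prove the key refinement step. Suppose $\mathcal{P}$ is an equipartition that is not $\epsilon$-regular, i.e.\ more than $\epsilon m^2$ ordered pairs $(V_i,V_j)$ are irregular. For each irregular ordered pair choose witnesses $V_i^{(j)}\subseteq V_i$ and $V_j^{(i)}\subseteq V_j$ of relative sizes at least $\epsilon$ with $|d_D(V_i^{(j)},V_j^{(i)})-d_D(V_i,V_j)|>\epsilon$. Refine each $V_i$ by the Venn diagram of all witness sets lying in it; there are at most $2(m-1)$ such sets, so $V_i$ splits into at most $4^{m}$ atoms. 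The argument now proceeds exactly as in the undirected case. By Cauchy--Schwarz, refining never decreases the contribution of an ordered pair, which is a weighted variance identity that does not use symmetry. An irregular ordered pair gains at least $\epsilon^4\cdot |V_i||V_j|/n^2$. Summing over more than $\epsilon m^2$ irregular pairs, each carrying weight about $(1-\epsilon)^2/m^2$, the index increases by at least $c\,\epsilon^5$ for an absolute constant $c$.

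Third, we restore an equipartition. Chop each atom into pieces of a common size $\lfloor |V_i|/4^{m'}\rfloor$ for a suitable $m'$, and throw the remainders into the exceptional set. Further refinement again does not decrease $q$. The loss comes only from the vertices moved into $V_0$, and it can be made at most $\epsilon^5 c/2$ by choosing the piece size so that at most $\epsilon n/ 2^{k}$ vertices are discarded at the $k$-th iteration. The exceptional set then stays below $\epsilon n$ overall, which is the routine bookkeeping of the standard proof.

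Since $q\le 1$ and each non-regular step raises $q$ by at least $c\epsilon^5/2$, the process stops after at most $2/(c\epsilon^5)$ iterations. The number of classes grows at most as $m\mapsto m4^m$ per step, which yields a tower-type bound $T=T(\epsilon,t)$. We take $n_0\ge T/\epsilon$ so that classes are nonempty and rounding errors are negligible. The main obstacle is the third step: we must control the exceptional set and the index loss simultaneously across iterations. We handle it as in Szemer\'{e}di's proof, by budgeting a geometrically decreasing fraction of vertices per round. Alternatively, one can avoid the issue by first proving the lemma without size-equalization and then equalizing once at the end. An even shorter route we would try first is to encode $D$ as two undirected graphs on $V$ (forward edges $uv$ with $u<v$, and backward ones) and invoke a multi-graph version of Szemer\'{e}di's lemma. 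A common partition regular for both graphs is then regular for all ordered pairs of $D$, at the cost of a constant-factor change in $\epsilon$.
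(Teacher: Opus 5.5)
The paper does not prove this lemma; it cites Alon and Shapira. The standard proof is your main route: Szemer\'edi's index-increment argument run over ordered pairs. That route is sound, because the Cauchy--Schwarz/variance step never uses symmetry.

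However, the ``even shorter route'' you say you would try first is wrong. For disjoint $X,Y$ the forward graph $G_1$ and backward graph $G_2$ satisfy $e_{G_1}(X,Y)+e_{G_2}(X,Y)=|E_D(X,Y)|+|E_D(Y,X)|$, but neither graph separates the two directions. An edge $\{x,y\}$ of $G_1$ with $x\in X$, $y\in Y$ is an arc from $X$ to $Y$ exactly when $x<y$. That depends on how $X$ and $Y$ interleave in the vertex order, and regularity of $G_1$ and $G_2$ cannot see it.

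Concretely, let $D$ be the transitive tournament on $\{1,\dots,n\}$ with $u\to v$ iff $u<v$. Then $G_1=K_n$ and $G_2$ is empty, so every equipartition is $\epsilon$-regular for both. Now let $V_1$ be the odd and $V_2$ the even integers, so $d_D(V_1,V_2)\approx 1/2$. Take $X$ to be the odd integers below $n/2$ and $Y$ the even integers above $n/2$. Then $d_D(X,Y)=1$ with $|X|\approx |V_1|/2$ and $|Y|\approx |V_2|/2$, so $(V_1,V_2)$ is not $\epsilon$-regular in $D$. Stick with the direct adaptation.

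Your bookkeeping in the third step also does not work as written. Your index ignores $V_0$, so discarding a set $R$ of vertices can lower $q$ by as much as about $2|R|/n$. Discarding $\epsilon n/2^k$ vertices in round $k$ can therefore cost order $\epsilon$ in the first rounds, which swamps the gain $c\epsilon^5$. There are two fixes:
\begin{enumerate}
\item Count the vertices of $V_0$ as singleton classes in the index, as Szemer\'edi does. Then moving vertices into $V_0$ is itself a refinement and costs nothing.
\item Choose the piece size so that each round discards at most $c\epsilon^6 n/4$ vertices. Then each round loses at most $c\epsilon^6/2\le c\epsilon^5/2$ of the index, and over the at most $2/(c\epsilon^5)$ rounds the exceptional set grows by at most $\epsilon n/2$.
\end{enumerate}
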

Let $\eta>0$ be given and let $0<\epsilon \ll d\ll \eta$.  Consider an $\epsilon$-regular partition $V_0, V_1,\dots, V_m$ of $D$.
The {\it reduced digraph} of $D$ is the digraph $\mathcal{R}_D:=\mathcal{R}_D(\epsilon, d, t)$  with $V(\mathcal{R}_D)=\{1, \dots, m\}$ and the edge  set defined as follows. We have the edge from $i$ to $j$ if $(V_i, V_j)$ is $\epsilon$-regular and $d_D(V_i, V_j)\geq d$. From standard computations it follows that 
\begin{equation}\label{deltsR}\delta^+(\mathcal{R}_D)\geq (\delta^+(D_H)/n -O(d))m.\end{equation}

If $\mathcal{R}_D$ is not an oriented graph, then we have $ij\in E(\mathcal{R}_D)$ and $ji\in E(\mathcal{R}_D)$ for some $i,j\in V(\mathcal{R}_D)$. In that case, given a fixed $l$, the digraph $D_H[V_i, V_j]$ contains more than $n^{2l-1}$ directed cycles of length $2l$ provided $n$ is large enough with respect to $l$. Consequently, by Fact \ref{fact-di-cycles}, $H$ has a rainbow cycle of length $2l$. Therefore, we may assume that 
$\mathcal{R}_D$ is an oriented graph. 

One can show that if $\mathcal{R}_D$ contains a $(|\mathcal{R}_D|/5, \mu)$-blow-up of a directed cycle of length five, then $D_H$ is $\eta$-extremal. 
 \begin{lemma}\label{blow-up-extremal-lem}
 Let $n\in \mathbb{Z}^+$ be large enough. For every $0<\eta<1$, there is $0<\mu<1$ such that if $\mathcal{R}_D$ contains a $(|\mathcal{R}_D|/5, \mu)$-blow-up of a directed cycle of length five, then $D_H$ is $\eta$-extremal. 
 \end{lemma}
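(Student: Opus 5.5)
Throughout the argument the key input is the minimum out-degree of $D_H$: $\delta^+(D_H)\ge \delta^c(H)\ge n/5$, because $|N^c(v)|=d^c(v)$. The plan has three steps: pull the blow-up structure from $\mathcal{R}_D$ back to $D_H$, which gives a set of ``good'' vertices with roughly the right degrees; redistribute the few remaining vertices using the degree bound; and then clean up.

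\textbf{Step 1: pull back the blow-up.} Let $W_0,\dots,W_4\subseteq V(\mathcal{R}_D)$ witness the blow-up, with $|W_i|\approx m/5$. Set $U_i:=\bigcup_{j\in W_i}V_j$, so that $|U_i|=(1\pm 2\mu)n/5$ up to the exceptional set $V_0$ and the $O(\mu m)$ missing clusters. Since $|E_{\mathcal{R}}(W_i,W_{i+1})|\ge(1-\mu)|W_i||W_{i+1}|$ and every edge of $\mathcal{R}_D$ has density at least $d$, we only get $|E_{D_H}(U_i,U_{i+1})|\ge d(1-\mu)|U_i||U_{i+1}|$. That is too weak, so the degree bound has to do the work. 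Because $\mathcal{R}_D$ is oriented, the in- and out-neighbourhoods of any cluster are disjoint. By (\ref{deltsR}) each cluster in $W_i$ has out-degree at least $(1/5-O(d))m$ in $\mathcal{R}_D$, and almost all of $W_{i+1}$ is already an out-neighbour. Summing over all clusters, out-degrees are pinned to about $m/5$ while total edge counts are pinned to about $m^2/5$. Hence almost all out-edges from $W_i$ go to $W_{i+1}$, and the same holds for in-edges.

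Transferring this to $D_H$: for typical vertices of $U_i$, most out-edges go into clusters of $W_{i+1}$. By regularity and the density $\ge d$, these hit most of $U_{i+1}$ only in aggregate. For individual vertices I would instead count. We have $\sum_{v\in U_i}d^+(v)\ge |U_i|n/5$, while the number of edges from $U_i$ to outside $U_{i+1}$ is at most $O(\sqrt{\mu}+d+\epsilon)n^2$. So all but $\gamma n$ vertices $v\in U_i$ satisfy $|N^+(v)\cap U_{i+1}|\ge(1-\gamma)|U_{i+1}|$, where $\gamma\to0$ as $\mu,d,\epsilon\to0$. Here I use that $d^+(v)\le$ roughly $|U_{i+1}|$ plus the small amount going elsewhere, together with the lower bound $n/5$. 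The same argument applies to in-degrees, using the aggregate count of in-edges.

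\textbf{Step 2: redistribute the bad vertices.} Each remaining vertex $v$ (bad, or in $V_0$) still has $d^+(v)\ge n/5$. Its out-neighbourhood then covers a $(1/5-o(1))$-fraction of the vertices. The aim is to show that $v$ has almost all of some $U_{j+1}$ among its out-neighbours and almost all of $U_{j-1}$ among its in-neighbours, and then to put $v$ into $V_j$.

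The obstacle is exactly here. A vertex may spread its out-degree over several parts, and nothing forces its in-degree to be large. I would handle this with a second pass: first discard the small set of vertices with in-degree far from $n/5$. The sum of in-degrees equals the sum of out-degrees, which is $\ge n^2/5$, and the good structure caps in-degrees into each part. Then use that $H$ has no rainbow $C_4$, so by Fact~\ref{fact-di-cycles} there are few directed $C_4$'s in $D_H$. If $v$ has many out-neighbours in $U_{j+1}$ and also many in $U_{j'+1}$ with $j'\ne j$, or in-neighbours inconsistent with its out-neighbours, then $v$ lies on $\Omega(n^3)$ directed $4$-cycles through the good structure, since $U_i\to U_{i+1}\to\cdots$ closes up after four steps modulo $5$ only along specific patterns. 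Summing over $\Omega(n)$ such vertices contradicts Fact~\ref{fact-di-cycles}. Identifying exactly which spread-out configurations create $\Omega(n^3)$ directed $C_4$'s is the delicate part.

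\textbf{Step 3: clean up.} Any vertices that still cannot be placed are shown to be $o(n)$ in number. Since $\eta$ can be chosen after these error terms, I would then choose $\mu$ (and hence $\epsilon, d$) small enough in terms of $\eta$, and set $V_i:=U_i\cup\{\text{vertices assigned to }i\}$. This satisfies Definition~\ref{ext-def}, with the degree conditions holding for every vertex after the reassignment.
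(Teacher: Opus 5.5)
Your overall shape (pull the blow-up back to $D_H$, average against $d^+(v)\geq n/5$, discard what fails) matches the paper's ``standard computations''. But Step 1 has a genuine gap exactly where the work happens.

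\textbf{Step 1.} Your bound on the number of arcs of $D_H$ from $U_i$ to $V\setminus U_{i+1}$ rests on the out-degrees of $\mathcal{R}_D$ and on $\|\mathcal{R}_D\|$ being ``pinned'' near $m/5$ and $m^2/5$. Nothing supports this. (\ref{deltsR}) and the blow-up give only \emph{lower} bounds, nothing bounds $d^c(v)$ from above, and orientation of $\mathcal{R}_D$ controls only arcs from $W_i$ to $W_{i-1}$.

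The ingredient your proposal never uses is that $H$ is triangle-free, and without it the statement fails. Split $V$ into $U_0,\dots,U_4$ of size $n/5$, and let each $v\in U_i$ send arcs to independent random subsets of density $0.51$ of $U_{i+1}$ and of $U_{i+2}$. Colouring each arc by its head realizes this as $D_H$, up to one extra arc per vertex. Then $\delta^+>n/5$, and for a regular partition refining the $U_i$ the reduced digraph is oriented and contains a complete blow-up of $C_5$. So everything your Step 1 uses holds. Yet any two vertices share only about $0.11n$ out-neighbours, whereas two vertices of one $V_i$ would have to share about $n/5$. Hence no $\eta$-extremal sets exist.

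The repair goes as follows.
\begin{itemize}
\item Since $H$ is triangle-free, so is the underlying graph of $\mathcal{R}_D$, by the counting lemma.
\item As in Lemma \ref{non-ext-lem-comput}, near-completeness of all pairs $E_{\mathcal{R}_D}(W_j,W_{j+1})$ then leaves only $O(\mu m^2)$ arcs of $\mathcal{R}_D$ from $W_i$ into $W_i\cup W_{i+2}\cup W_{i+3}$.
\item Orientation gives $O(\mu m^2)$ arcs from $W_i$ into $W_{i-1}$.
\item At most $\mu m$ clusters lie outside $\bigcup_j W_j$.
\item Pairs of clusters that are not arcs of $\mathcal{R}_D$ are irregular or have density below $d$.
\end{itemize}
Together these give $O(\mu+d+\epsilon+1/m)n^2$ arcs of $D_H$ from $U_i$ to outside $U_{i+1}$, after which your averaging against $d^+(v)\geq n/5$ works. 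The in-degree condition is not ``the same argument'', since in-degrees have no lower bound. It follows instead from the resulting $O(\mu+d+\epsilon+1/m)n^2$ bound on pairs in $U_{i-1}\times U_i$ that are not arcs.

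\textbf{Step 2.} This step is an unnecessary detour, and its stated goal is in general unattainable. Definition \ref{ext-def} asks only for pairwise disjoint sets, not a partition (as the paper notes), and the paper simply omits failing vertices. Let $V_i$ consist of the vertices of $U_i$ that send arcs to a $(1-\gamma)$-fraction of $U_{i+1}$ and receive arcs from a $(1-\gamma)$-fraction of $U_{i-1}$. Only $O(\gamma)n$ vertices are lost, so every survivor meets the conditions relative to the shrunken $V_{i\pm1}$, with $O(\gamma)\leq\eta$ in place of $\gamma$.

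All your Step 2 candidates (bad vertices, $V_0$, vertices of clusters outside $\bigcup_j W_j$) are already this few. So no placement is needed, and no $C_4$ counting either; that counting would also require $H$ to have no rainbow $C_4$, which is not a hypothesis of the lemma. Placement can genuinely fail: for example, a vertex with no in-neighbours in $D_H$ fits in no $V_j$. Note also that your Step 3 sets $V_i:=U_i\cup\{\dots\}$, which would readmit the bad vertices of $U_i$.
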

 The proof of Lemma \ref{blow-up-extremal-lem} is based on standard computations, where vertices that fail to satisfy the condition from Definition \ref{ext-def} are omitted. (Note that $V_0,\dots, V_4$ is not necessarily a partition.) 

\section{The Non-extremal Case}\label{non-ext-section}
In this section we address the non-extremal case. We suppose that $D$ is an oriented graph  with no directed cycle of length four, that satisfies  $\delta^+(D)\geq (1/5-\epsilon)|D|$ for some small  $\epsilon$. We show that $D$ contains a $(|D|/5, \mu)$-blow-up of a directed cycle of length five for some $\mu$ that depends on $\epsilon$. The argument splits into two parts. In the first part, we show that there is a sub-digraph $D'\subseteq D$ of order at least $(1-\epsilon)|D|$ such that for every $v\in V(D')$, 
$(1/5-\epsilon)|D'|\leq \abs{N^+_{D'}(u)}, \abs{N^-_{D'}(u)}\leq (1/5+\epsilon)|D'|$. In the second part, we argue that $D'$ contains a $(|D'|/5, \mu)$-blow-up of a directed cycle of length five. This gives a $(|D|/5, O(\mu))$-blow-up of a directed cycle of length five in $D$, and so by Lemma \ref{blow-up-extremal-lem}, $D$ is $\eta$-extremal with $\eta$ that depends on $\mu$. 
\subsection{Part 1}
 We will start with the following observation.
\begin{lemma}\label{non-ext-lem01}Let $\xi>0$ and let $D$ be an triangle-free oriented graph such that for every $v\in V(D)$, $|N^+_D(v)|= (1/5-\xi)\abs{D}$ and  such that there exists an edge $u_0v_0$ in $D$ that satisfies $|N^-_D(u_0)|+|N^-_D(v_0)|\geq (2/5+ 4\xi)\abs{D}$. Then $D$ has a directed cycle of length four.
\end{lemma}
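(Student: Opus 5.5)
We argue by contradiction: assume $D$ has no directed $C_4$, write $N:=\abs{D}$, and look at the neighbourhoods around the fixed edge $u_0v_0$. Put $A:=N^-_D(u_0)$, $B:=N^-_D(v_0)$ and $C:=N^+_D(v_0)$. By hypothesis $\abs{A}+\abs{B}\ge (2/5+4\xi)N$, and $\abs{C}=(1/5-\xi)N$.

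The first step is to show that $A$, $B$, $C$ are pairwise disjoint.
\begin{itemize}
\item A vertex of $A\cap B$ would span a triangle with $u_0,v_0$.
\item A vertex of $A\cap C$ would also span a triangle with $u_0,v_0$.
\item $B\cap C=\emptyset$ because $D$ is oriented.
\end{itemize}
Hence $\abs{A\cup B\cup C}\ge (3/5+3\xi)N$, and the remainder $R:=V(D)\setminus(A\cup B\cup C)$ has $\abs{R}\le (2/5-3\xi)N$. This is the only place the in-degree hypothesis enters, so the rest must be driven by the fixed out-degree $(1/5-\xi)N$.

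Next we locate out-neighbourhoods. For $c\in C$ we have:
\begin{itemize}
\item $N^+(c)\cap A=\emptyset$, since otherwise $a\,u_0\,v_0\,c$ is a directed $C_4$;
\item $N^+(c)\cap C=\emptyset$, since otherwise $v_0,c,c'$ form a triangle;
\item $N^+(c)\cap B=\emptyset$, since otherwise $v_0,c,b$ form a triangle;
\item $v_0\notin N^+(c)$, since $D$ is oriented.
\end{itemize}
So $N^+(c)\subseteq R$ for every $c\in C$. The same reasoning applied to out-neighbourhoods of vertices $x\in N^+(C)\subseteq R$ gives $N^+(x)\cap B=\emptyset$, since $v_0\,c\,x\,b$ would be a directed $C_4$. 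It also gives $N^+(x)\cap N(c)=\emptyset$ by triangle-freeness. Symmetrically, we use $u_0$: for $a\in A$ the path $a\,u_0\,v_0$ forbids $N^-(a)\cap C$ and forces the in-neighbourhoods of $A$ away from $C$.

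The aim is an edge count showing that $R$ is too small to absorb these out-neighbourhoods. Let $X:=N^+(C)\subseteq R$. Every vertex of $C\cup X$ sends all of its $(1/5-\xi)N$ out-edges into a region of size at most about $(2/5-3\xi)N$ plus possibly $A$ (only for $X$). Combining this with triangle-freeness (each $N^+(c)$ is independent, and $N^+(c)\cup N^+(c')$ for adjacent $c,c'$ is restricted) and with the absence of a $C_4$ (no directed $2$-paths from $C$ into $B$ returning to $v_0$), we aim to show that the subdigraph on $R\cup A$ receiving these edges must contain a vertex of out-degree less than $(1/5-\xi)N$, or else a directed $C_4$.

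Concretely, we will first try to show that $X$ and $N^+(X)$ are disjoint subsets of $R\cup A$. The first is forced into $R$, and the second must avoid $B$, $C\cap N(c)$ and $X\cap N(c)$. Each has size at least $(1/5-\xi)N$. Together they would need $\abs{R}+\abs{A}$ to be large, while edges from $N^+(X)\cap A$ back toward $u_0$ close a $C_4$ $x\,a\,u_0\,\cdot$ only in specific configurations.

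I expect this final counting step to be the main obstacle: pinning down exactly which of $A$, $R$ the second out-neighbourhood lands in. The plan is to use the $4\xi$ slack in the in-degree hypothesis against the $3\xi$ deficit of $\abs{R}$, which leaves room for one more out-neighbourhood of size $(1/5-\xi)N$ only if it overlaps forbidden sets.
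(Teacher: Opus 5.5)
\textbf{There is a genuine gap: the proposal stops before any contradiction is reached, and the final step you sketch cannot supply one.}

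Your preliminary observations are correct: $A,B,C$ are pairwise disjoint, $N^+(c)\subseteq R$ for $c\in C$, and $N^+(x)\cap B=\emptyset$ for $x\in N^+(C)$. The sketched final step fails for two reasons.
\begin{itemize}
\item The claim that $X=N^+(C)$ and $N^+(X)$ are disjoint does not follow from what you have. Triangle-freeness only rules out arcs inside a single $N^+(c)$, not arcs from $N^+(c)$ to $N^+(c')$ with $c\neq c'$.
\item Even granting disjointness, comparing set sizes gives nothing. The hypothesis controls only $\abs{A}+\abs{B}$, so $\abs{B}$ may be small and $\abs{R}+\abs{A}=N-\abs{B}-(1/5-\xi)N$ may be close to $4N/5$. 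That leaves ample room for two disjoint sets of size $(1/5-\xi)N$.
\end{itemize}
Already $R$ alone, of size up to $(2/5-3\xi)N$, can host two out-neighbourhoods. So no argument of the type ``these out-neighbourhoods do not fit'' can succeed with your decomposition.

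\textbf{The missing ingredient is the set $N^+(u_0)$, which you leave inside $R$.} By triangle-freeness it is disjoint from $A$, $B$, $C$. Hence $R':=R\setminus N^+(u_0)$ satisfies $\abs{R'}\le(1/5-2\xi)N$, strictly less than an out-degree; this is where the $4\xi$ slack is actually spent. The paper's proof then runs as follows.
\begin{itemize}
\item For $c\in C$ you get $N^+(c)\subseteq N^+(u_0)\cup R'$.
\item For $y\in N^+(C)\cap N^+(u_0)$ you get $N^+(y)\subseteq C\cup R'$. Independence of $N(u_0)$ excludes $A\cup N^+(u_0)$, and your own observation (the directed $C_4$ $c\,y\,b\,v_0$) excludes $B$.
\item Let $G$ be the digraph induced on $C\cup\bigl(N^+(C)\cap N^+(u_0)\bigr)$. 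It is bipartite with each side of size at most $(1/5-\xi)N$. Also $\|G\|>0$, since $\abs{N^+(c)}>\abs{R'}$ forces each $c\in C$ to have an out-neighbour in $N^+(u_0)$.
\item For every arc $xy$ of $G$ we have $N^+(x)\cap N^+(y)=\emptyset$, so their out-neighbourhoods share nothing in $R'$. Hence $d^+_G(x)+d^+_G(y)\ge 2(1/5-\xi)N-\abs{R'}\ge N/5$.
\item Summing over arcs and double counting gives
\[
\sum_{xy\in E(G)}\bigl(d^+_G(x)+d^+_G(y)\bigr)=\sum_{w\in V(G)}d^+_G(w)\bigl(d^+_G(w)+d^-_G(w)\bigr)\le(1/5-\xi)N\,\|G\|.
\]
This yields $\|G\|N/5\le(1/5-\xi)N\,\|G\|$, a contradiction.
\end{itemize}
The key step your plan lacks is this weighted double count over arcs. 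It exploits that adjacent vertices cannot share out-neighbours inside the small leftover set $R'$, rather than comparing sizes of out-neighbourhoods.
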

\begin{proof}
Let $A_0:=N^-_D(u_0)$, $B_0:=N^+_D(u_0)$, $A_1:=N^-_D(v_0)$, $B_1:=N^+_D(v_0)$, and note that $|B_0|=  |B_1| = (1/5-\xi)|D|$. Since $D$ is triangle-free, all sets $A_0, A_1, B_0, B_1$ are pairwise disjoint.
Let $C:=V(D)\setminus (A_0\cup A_1\cup B_0\cup B_1),$ and note that
\begin{equation}\label{non-ext-p1-eq0}
|C|\leq (1/5- 2\xi)\abs{D}.
\end{equation}
 In addition, using the fact $D$ is triangle-free, $D[A_0\cup B_0]$, $D[A_1\cup B_1]$ have no edges. 
 If $E_D(B_1,A_0)\neq \emptyset$, then there is a directed  cycle of length four in $D$. Thus we may assume that for every $x\in B_1$, $N^+_D(x)\subseteq B_0\cup C.$ Similarly, if $y\in N^+_D(B_1)\cap B_0$, then we may 
 assume that $N^+_D(y)\subseteq B_1\cup C$. 
 
 Let $G$ be the sub-digraph of $D$ induced by $(N^+_D(B_1) \cap B_0) \cup B_1$. 
Since $D$ is triangle-free, if $xy\in E(G)$, then  $|N^+_D(x)\cap C|+ |N^+_D(y)\cap C|\leq |C|$. Now, in view of (\ref{non-ext-p1-eq0}), for $xy\in E(G)$ 
\begin{equation}\label{eq-non-ext-p1-eq1}
|N^+_G(x)|+|N^+_G(y)|\geq 2(1/5-\xi)\abs{D}-|C|\geq \abs{D}/5.
\end{equation}
Consequently,
\begin{equation}\label{eq-non-ext-p1-eq2}\sum_{xy\in E(G)} |N^+_G(x)|+|N^+_G(y)| \geq \abs{D}\cdot \|G\|/5.\end{equation}

At the same time, for $i=0,1$ and $w\in V(G)\cap B_i$, $|N^+_G(w)|+|N^-_G(w)|\leq |B_{1-i}|=(1/5-\xi)|D|$. Therefore,
\begin{equation}\label{eq-non-ext-p1-eq3}\sum_{xy\in E(G)} |N^+_G(x)|+|N^+_G(y)|=\sum_{w\in V(G)}(|N^+_G(w)|\cdot |N^+_G(w)|+|N^+_G(w)|\cdot |N^-_G(w)|)\leq (1/5-\xi)\abs{D} \sum_{w\in V(G)} |N^+_G(w)|.\end{equation}
 Consequently, from (\ref{eq-non-ext-p1-eq2}) and (\ref{eq-non-ext-p1-eq3}),
$||G||/5 \leq (1/5-\xi)||G||$ which is not possible since $||G||>0$.

\end{proof}
We will now show that for every vertex $v$ in $D$  but a small number of exceptions, we may assume $|N^-_D(v)|\geq (1/5-\mu)\abs{D}$ for some $\mu =\mu(\xi)$ with $\mu \rightarrow 0$ when $\xi\rightarrow 0$.
To prove this observation, we will use the following ``defect form" of Cauchy-Schwarz inequality \cite{szem}.
\begin{lemma}\label{CS-ineq}
Let $p,q\in \mathbb{Z}^+$ with $q\leq p$, $\eta>0$, and let $x_1, \dots, x_p$ be non-negative real numbers. If $\sum_{k=1}^qx_k=\frac{q}{p}\sum_{k=1}^px_k+\eta$, then
$$\sum_{k=1}^px_k^2\geq \frac{1}{p}\left(\sum_{k=1}^px_k\right)^2+\frac{\eta^2p}{q(p-q)}.$$
\end{lemma}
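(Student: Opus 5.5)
This is the classical defect Cauchy–Schwarz inequality, which I will prove by splitting the sum into the first $q$ terms and the remaining $p-q$ terms. Set $S=\sum_{k=1}^p x_k$, $A=\sum_{k=1}^q x_k$ and $B=\sum_{k=q+1}^p x_k$. By hypothesis
$$A=\frac{q}{p}S+\eta, \qquad B=\frac{p-q}{p}S-\eta.$$
If $q=p$, then $A=S$ forces $\eta=0$, contradicting $\eta>0$. So we may assume $q<p$, and all denominators below are nonzero.

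Apply the ordinary Cauchy–Schwarz inequality to each block separately:
$$\sum_{k=1}^q x_k^2\geq \frac{A^2}{q}, \qquad \sum_{k=q+1}^p x_k^2\geq \frac{B^2}{p-q}.$$
Hence $\sum_{k=1}^p x_k^2\geq A^2/q+B^2/(p-q)$, and it remains to evaluate the right-hand side.

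Substitute the expressions for $A$ and $B$:
$$\frac{A^2}{q}=\frac{q S^2}{p^2}+\frac{2S\eta}{p}+\frac{\eta^2}{q}, \qquad \frac{B^2}{p-q}=\frac{(p-q)S^2}{p^2}-\frac{2S\eta}{p}+\frac{\eta^2}{p-q}.$$
The cross terms $\pm 2S\eta/p$ cancel. The $S^2$ terms add to $S^2/p$. The $\eta^2$ terms add to
$$\eta^2\left(\frac{1}{q}+\frac{1}{p-q}\right)=\frac{\eta^2 p}{q(p-q)}.$$
This gives exactly
$$\sum_{k=1}^p x_k^2\geq \frac{1}{p}\Bigl(\sum_{k=1}^p x_k\Bigr)^2+\frac{\eta^2 p}{q(p-q)}.$$

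The only point needing care is the degenerate case $q=p$, handled above. Nonnegativity of the $x_k$ is not actually used, since Cauchy–Schwarz on each block holds for arbitrary reals.
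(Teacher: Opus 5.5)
Your argument is correct and complete. Splitting into the first $q$ and last $p-q$ terms, applying Cauchy--Schwarz to each block, and expanding gives exactly the stated bound. The case $q=p$ is correctly ruled out by $\eta>0$, and you are right that nonnegativity of the $x_k$ is never used.

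The paper itself gives no proof of this lemma; it simply cites Szemer\'edi. Your block-wise argument is the standard proof of this defect form of Cauchy--Schwarz.
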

Using Lemma \ref{CS-ineq} we can prove the following.
\begin{lemma}\label{lem2-non-ext-part1}
    For every $\mu>0$ there exists $\xi>0$ such that the following holds. If $D$ is an oriented triangle-free graph  such that for every $v\in V(D)$, $|N^+_D(v)|=(1/5-\xi)\abs{D}$ and for every edge $uv$ in $D$, $|N^-_D(u)|+|N^-_D(v)|\leq (2/5+ 4\xi)\abs{D}$, then the number of vertices $v$ in $D$ such that $|N^-_D(v)|\geq (1/5+\mu)\abs{D}$ is at most $\mu \abs{D}$.    
\end{lemma}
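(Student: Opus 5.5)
The plan is a double-counting argument over the edges of $D$, combined with the defect form of the Cauchy--Schwarz inequality (Lemma \ref{CS-ineq}). Let $N:=|D|$ and write $d^-(v):=|N^-_D(v)|$. Since every out-degree equals $(1/5-\xi)N$, we have $\|D\|=(1/5-\xi)N^2$, and hence $\sum_v d^-(v)=(1/5-\xi)N^2$. So the average in-degree is exactly $(1/5-\xi)N$. The hypothesis on edges will give an upper bound on $\sum_v d^-(v)^2$ that is only slightly above $(1/N)\left(\sum_v d^-(v)\right)^2$. Lemma \ref{CS-ineq} will then show that many vertices with in-degree well above average would force a much larger second moment.

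\emph{Upper bound on the second moment.} Sum the hypothesis $d^-(u)+d^-(v)\leq (2/5+4\xi)N$ over all edges $uv\in E(D)$. In $\sum_{uv}d^-(v)$ each vertex $v$ is counted once per in-edge, which gives $\sum_v d^-(v)^2$. In $\sum_{uv}d^-(u)$ each vertex $u$ is counted once per out-edge, which gives $\sum_u d^+(u)d^-(u)=(1/5-\xi)N\sum_u d^-(u)=(1/5-\xi)N\|D\|$; this step uses that all out-degrees are equal. Therefore
$$\sum_v d^-(v)^2\leq (2/5+4\xi)N\|D\|-(1/5-\xi)N\|D\| = (1/5+5\xi)(1/5-\xi)N^3 .$$
Since $(1/N)\left(\sum_v d^-(v)\right)^2=(1/5-\xi)^2N^3$, the excess of the second moment over the Cauchy--Schwarz minimum is at most $6\xi(1/5-\xi)N^3<2\xi N^3$.

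\emph{Lower bound via Lemma \ref{CS-ineq}.} Suppose, for a contradiction, that more than $\mu N$ vertices satisfy $d^-(v)\geq(1/5+\mu)N$. Apply Lemma \ref{CS-ineq} with $p=N$, with $x_1,\dots,x_p$ the in-degrees listed so that $x_1,\dots,x_q$ belong to a set of exactly $q$ such high in-degree vertices, where $\mu N\le q\le N$. We have $q<p$: otherwise every vertex would have in-degree above $(1/5+\mu)N$, contradicting the average $(1/5-\xi)N$. The parameter $\eta$ of the lemma satisfies
$$\eta=\sum_{k\le q}x_k-\frac{q}{p}\sum_k x_k\geq q(1/5+\mu)N-q(1/5-\xi)N\geq q\mu N>0 .$$
The lemma then yields an excess of at least
$$\frac{\eta^2p}{q(p-q)}\geq \frac{q^2\mu^2N^2\cdot N}{q\cdot N}=q\mu^2N^2\geq \mu^3N^3 .$$
Choosing $\xi:=\mu^3/2$ (we may assume $\mu<1/5$), the two bounds contradict each other, because $2\xi N^3=\mu^3N^3$ while the first bound is strict.

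The main point to get right is the double counting over edges: the tail term $\sum_{uv}d^-(u)$ collapses to $(1/5-\xi)N\|D\|$ only because out-degrees are exactly equal. This is what turns the edge condition into a second-moment bound. Everything else is the routine bookkeeping of the defect inequality, namely handling $q\le p$ and taking $q$ to be exactly the number of high in-degree vertices used. Triangle-freeness is not needed for this lemma.
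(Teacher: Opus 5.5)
Your proposal is correct and follows the paper's proof essentially step for step. Both combine the double count $\sum_{uv}\bigl(d^-(u)+d^-(v)\bigr)=\sum_w d^-(w)\bigl(d^-(w)+d^+(w)\bigr)$, which uses that all out-degrees are equal, with Lemma \ref{CS-ineq} applied to the set of high in-degree vertices. Your handling of the defect inequality (defining $\eta$ as the exact defect, checking $q<p$, and taking $\xi=\mu^3/2$) is slightly more careful than the paper's, which applies the lemma with an inequality and states the weaker threshold $\xi<\mu^3/6$.
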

\begin{proof}Let $U:= \{v\in V(D) : |N^-_D(v)|\geq (1/5+\mu)\abs{D}\}$ and suppose $|U|> \mu \abs{D}$.
We have
\begin{equation}\label{eq-non-ext-p1-eq3.5}\|D\|=\sum_{v\in V(D)}|N^-_D(v)|=\sum_{v\in V(D)}|N^+_D(v)|=(1/5-\xi)\abs{D}^2\end{equation}
and so
$$\sum_{u\in U}|N^-_D(u)|\geq (1/5-\xi)\abs{D} \cdot |U|+ (\mu+\xi)\abs{D}\cdot |U|=\frac{|U|}{\abs{D}}\sum_{v\in V(D)}|N^-_D(v)|+(\mu+\xi)\abs{D}\cdot |U|.$$
Therefore, by Lemma \ref{CS-ineq} with $\eta=(\mu+\xi)|D|\cdot |U|, p = |D|, q = |U|$,
$$\sum_{v\in V(D)}|N^-_D(v)|^2\geq \frac{1}{\abs{D}}\left(\sum_{v\in V(D)}|N^-_D(v)|\right)^2+ \frac{(\mu+\xi)^2\abs{D}^3|U|^2}{|U|(\abs{D}-|U|)}\geq \frac{1}{|D|}\left(\sum_{v\in V(D)}|N^-_D(v)|\right)^2+\mu^3\abs{D}^3,$$
since $|U|\geq \mu \abs{D}$. Consequently,
\begin{equation}\label{eq-non-ext-p1-eq4}
 \sum_{v\in V(D)}|N^-_D(v)|^2\geq\frac{1}{\abs{D}}\|D\|^2+\mu^3\abs{D}^3= (1/5-\xi)\abs{D}\cdot \|D\|+\mu^3\abs{D}^3.   
\end{equation}
Since for every $v$, $|N^+_D(v)|\geq (1/5-\xi)\abs{D}$, we have
$$\sum_{uv\in D}|N^-_D(u)|+|N^-_D(v)|= \sum_{w\in V(D)}|N^-_D(w)|(|N^-_D(w)|+|N^+_D(w)|)= (1/5-\xi)\abs{D}\cdot \|D\|+ \sum_{w\in V(D)}|N^-_D(w)|^2.$$
Thus, using the assumption that for $uv\in D$, $|N^-_D(u)|+|N^-_D(v)|\leq (2/5+4\xi)|D|$, by (\ref{eq-non-ext-p1-eq4}),
$$(2/5+4\xi)\abs{D} \cdot \|D\|\geq  (1/5-\xi)\abs{D}\cdot\|D\|+ \sum_{w\in V(D)}|N^-_D(w)|^2 \geq (2/5-2\xi)\abs{D}\cdot\|D\|+\mu^3\abs{D}^3,$$
which is a contradiction when $\xi < \mu^3/6.$
\end{proof}
From the previous two lemmas, we obtain the following corollary.
\begin{cor}\label{cor-NE-part1}
    Let $\epsilon>0$. There exists $\xi>0$ such that the following holds. If $D'$ is an oriented triangle-free graph that has no $C_4$ and satisfies $\delta^+(D') \geq (1/5-\xi)|D'|$, then there exists a sub-digraph $D$ of $D'$ such that $|D|\geq (1-\epsilon) |D'|$ and  for every $u\in V(D)$ and $o\in \{+,-\}$ $$(1/5 - \epsilon)|D|\leq |N^o_{D}(u)| \leq (1/5+\epsilon)|D|.$$
   
\end{cor}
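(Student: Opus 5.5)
We first pass to a spanning subdigraph $D_1\subseteq D'$ in which every out-degree equals exactly $\lfloor(1/5-\xi)|D'|\rfloor$, by deleting out-edges from each vertex. $D_1$ is still oriented, triangle-free and $C_4$-free, and it has the same vertex set. So Lemmas \ref{non-ext-lem01} and \ref{lem2-non-ext-part1} apply to $D_1$. Since $D_1$ has no directed $C_4$, Lemma \ref{non-ext-lem01} gives $|N^-_{D_1}(u)|+|N^-_{D_1}(v)|<(2/5+4\xi)|D'|$ for every edge $uv$. Then Lemma \ref{lem2-non-ext-part1}, applied with a small parameter $\mu\ll\epsilon$ and $\xi=\xi(\mu)$, shows that the set $U$ of vertices with $|N^-_{D_1}(v)|\geq(1/5+\mu)|D'|$ has $|U|\leq\mu|D'|$.

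Next we control small in-degrees by counting. The edge count is
$\|D_1\|=(1/5-\xi)|D'|^2$,
and every vertex outside $U$ has in-degree below $(1/5+\mu)|D'|$. The total in-degree of $U$ is at most $|U|\cdot|D'|\le\mu|D'|^2$. Let $L$ be the set of vertices with in-degree at most $(1/5-\sqrt{\mu})|D'|$. Comparing the sum of in-degrees with $\|D_1\|$ gives
$(1/5-\xi)|D'|^2\le (1/5+\mu)|D'|^2+\mu|D'|^2-\sqrt{\mu}|D'|\cdot|L|$,
so $|L|\le 3\sqrt{\mu}|D'|$. Hence all vertices outside $U\cup L$, which is all but at most $4\sqrt\mu|D'|$ of them, have in- and out-degree in $D_1$ within $\sqrt\mu|D'|$ of $|D'|/5$.

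Set $X:=U\cup L$ and $D:=D_1-X$ (or $D'[V\setminus X]$ with the edges of $D_1$). Deleting $X$ removes at most $|X|\le 4\sqrt\mu|D'|$ neighbours of any vertex, so each remaining vertex keeps in- and out-degree between $(1/5-5\sqrt\mu)|D'|$ and $(1/5+\sqrt\mu)|D'|$. Since $|D|\ge(1-4\sqrt\mu)|D'|$, these bounds rescale to $(1/5\pm\epsilon)|D|$ once $\mu$ is chosen with, say, $10\sqrt\mu<\epsilon$. Then $\xi$ is taken from Lemma \ref{lem2-non-ext-part1} and is also made smaller than $\mu$.

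The main obstacle is the lower bound on in-degrees, because Lemma \ref{lem2-non-ext-part1} only bounds large in-degrees. The averaging step above handles it, but only because the out-degrees in $D_1$ are exactly regular and so the total edge count is pinned. A secondary issue is that deleting $X$ is a single round, not an iterative cleanup. It needs no iteration because $|X|$ is small compared with every degree, so one deletion preserves all the degree bounds with only a linear loss. Finally, we check that the out-degree regularization keeps $\delta^+\ge(1/5-\xi)|D'|$ up to rounding, which is absorbed into $\xi$.
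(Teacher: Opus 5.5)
Your proposal is correct and is essentially the paper's argument: you regularize the out-degrees, use Lemma \ref{non-ext-lem01} (contrapositively) together with Lemma \ref{lem2-non-ext-part1} to discard the at most $\mu|D'|$ vertices of large in-degree, and use the fixed edge count to show that only $O(\sqrt{\mu})|D'|$ vertices have in-degree below $(1/5-\sqrt{\mu})|D'|$. The only difference is cosmetic: you find both exceptional sets in $D_1$ and delete them in one round, whereas the paper first deletes the high-in-degree vertices to get $D''$ and then does the averaging for low in-degrees inside $D''$.
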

\begin{proof}
Let $\xi \ll \mu \ll \epsilon$. By potentially deleting some edges in $D'$ we may assume that for every $v\in V(D')$, $|N^+_{D'}(v)|= (1/5 - \xi)|D'|$. From Lemma \ref{non-ext-lem01} and Lemma \ref{lem2-non-ext-part1}, there are at most $\mu |D'|$  vertices $v$ such that $|N^-_{D'}(v)|\geq (1/5+\mu)|D'|$. Deleting these vertices from $V(D')$ gives a digraph $D''$ such that for every $u\in V(D'')$, $(1/5-O(\mu))|D''|\leq |N^+_{D''}(u)|\leq (1/5+O(\mu))|D''|$ and $|N^-_{D''}(u)|\leq (1/5+O(\mu))|D''|$.

Let $U:=\{v\in V(D'') : |N^-_{D''}(v)|<(1/5-\sqrt{\mu})|D''|$. We have
$$(1/5-\sqrt{\mu})|D''|\cdot |U| +(1/5+O(\mu))|D''|(|D''|-|U|)> \|D''\| \geq (1/5-O(\mu))|D''|^2.$$
Consequently, $|U|=O(\sqrt{\mu}|D''|)$. Let $D:=D''[V(D'')\setminus U]$. Then $|D|\geq (1-O(\sqrt{\mu}))|D'|$ and for every $u\in D$ and $o\in \{+,-\}$, $(1/5 - O(\sqrt{\mu})|D|\leq |N^o_{D}(u)| \leq (1/5+O(\sqrt{\mu})|D|.$

\end{proof}
%
%
\subsection{Part 2}
Let $\epsilon$ be positive and sufficiently small, let $D$ be the oriented graph from Corollary \ref{cor-NE-part1}, and let $m := \abs{D}$. By Corollary \ref{cor-NE-part1}, for every $u\in V(D)$, \begin{equation}\label{eq-P2-new1}(1/5-\epsilon)m\leq \abs{N^+_D(u)}, \abs{N^-_D(u)}\leq (1/5+\epsilon)m.\end{equation}
We again assume that $D$ has no directed cycle of length four and that $D$ is triangle-free.

Fix ${u_0v_0} \in E(D)$ and let $A_0:=N^-_D(u_0)$, $B_0:=N^+_D(u_0)$, $A_1:=N^-_D(v_0)$, $B_1:=N^+_D(v_0).$ Let $C:=V(D)\setminus (A_0\cup A_1\cup B_0\cup B_1),$ and note that by (\ref{eq-P2-new1}),
\begin{equation}\label{eq0}
(1/5-4\epsilon)m\leq |C|\leq (1/5+4\epsilon)m.
\end{equation}
In addition, all sets $A_0, A_1, B_0, B_1, C$ are pairwise disjoint and $D[A_0\cup B_0]$, $D[A_1\cup B_1]$ have no arcs. Using the fact that $D$ has no $C_4$ we have the following. 
\begin{itemize}
    \item[(a)] For $v\in B_1$, $N^+_D(v)\subseteq B_0\cup C$ and
    \item[(b)] for $w\in N^+_D(B_1)\cap B_0$, $N^+_D(w)\subseteq B_1\cup C$.
\end{itemize}
We will use two auxiliary constants $\eta, \xi$  such that $\epsilon\ll \xi\ll \eta$ and $\eta\rightarrow 0$ when $\epsilon\rightarrow 0$. (For example, $\xi=(7\epsilon)^{1/3}$ and $\eta=\epsilon^{1/64}$ would do.)  
We will now consider two cases.\\

\noindent{\bf Case 1:} $|E_D(B_1, B_0)|\geq \eta m^2$.\\
Let $G$ denote the sub-digraph of $D$ induced by $B_1\cup (N^+_D(B_1)\cap B_0)$. To simplify the notation let $W_1:= B_1$ and $W_0:=N^+_D(B_1)\cap B_0$. 
Rephrasing the assumption of the case, we have
\begin{equation}\label{eq-case1}
    |E_D(B_1, B_0)| = |E_D(W_1, W_0)|=|E_G(W_1, W_0)|\geq \eta m^2.
\end{equation}
In addition, by (a) and (b), for $i=0,1$ every $v\in W_i$, 
\begin{equation}\label{eq1}
    N^+_D(v)\subseteq W_{1-i}\cup C.
\end{equation}
Let $uv\in E(G)$. Since , $N^+_D(u)\cap N^+_D(v)=\emptyset$, in view of (\ref{eq-P2-new1}), (\ref{eq0}) and (\ref{eq1}), 
\begin{equation}\label{eq3}
    |N^+_G(u)|+|N^+_G(v)|\geq 2\cdot(1/5-\epsilon)m - |C|\geq
    (1/5- 6\epsilon)m.
\end{equation}
We start with the following lemma. Let $\widetilde{G}$ be the underlying graph of the digraph $G$, and recall that $\epsilon \ll \xi$.
\begin{lemma}\label{non-ext-lem0}
Let $Z:=  \{w\in W_0\cup W_1: |N^+_G(w)|\leq \xi m\}$. Then at most $2\xi m^2$ pairs $(v,w)\in W_0\times W_1$ are such that $vw\notin E(\widetilde{G})$ and $|\{v,w\}\cap Z|\leq 1$.
\end{lemma}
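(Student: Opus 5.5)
The plan is to run the double-counting argument from the proof of Lemma \ref{non-ext-lem01}, now keeping track of the slack instead of deriving a contradiction. For $w\in V(G)$ write $d^+(w):=|N^+_G(w)|$, $d^-(w):=|N^-_G(w)|$ and $d(w):=d^+(w)+d^-(w)$. Since $D$ is oriented, $d(w)$ is the degree of $w$ in $\widetilde G$. Each set $W_i$ is independent in $D$, because $D[A_0\cup B_0]$ and $D[A_1\cup B_1]$ have no arcs. So for $w\in W_i$ all neighbours of $w$ in $\widetilde G$ lie in $W_{1-i}$. Since $W_0\subseteq B_0$ and $W_1=B_1$, (\ref{eq-P2-new1}) gives
\[ d(w)\le |W_{1-i}|\le (1/5+\epsilon)m . \]
The quantity to control is $\bar d(w):=|W_{1-i}|-d(w)$, the number of non-neighbours of $w$ in $\widetilde G$ on the opposite side.

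First I sum (\ref{eq3}) over all arcs $uv\in E(G)$. In the sum $\sum_{uv\in E(G)}\bigl(d^+(u)+d^+(v)\bigr)$, the term $d^+(w)$ appears once for each arc leaving $w$ and once for each arc entering $w$, so the sum equals $\sum_{w} d^+(w)\,d(w)$. Therefore
\[ \sum_{w\in V(G)} d^+(w)\,d(w)\ \ge\ (1/5-6\epsilon)m\,\|G\| = (1/5-6\epsilon)m\sum_{w} d^+(w). \]
Combining this with $|W_{1-i}|\le (1/5+\epsilon)m$ gives
\[ \sum_{w\in V(G)} d^+(w)\,\bar d(w)\ \le\ \bigl((1/5+\epsilon)-(1/5-6\epsilon)\bigr)m\,\|G\| \le 7\epsilon m\cdot m^2 = 7\epsilon m^3 . \]

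Next I restrict to vertices outside $Z$. Each $w\notin Z$ has $d^+(w)>\xi m$, and every term in the sum above is nonnegative, so
\[ \sum_{w\in (W_0\cup W_1)\setminus Z}\bar d(w)\ \le\ \frac{7\epsilon m^3}{\xi m}=\frac{7\epsilon}{\xi}\,m^2 . \]
Now take any pair $(v,w)\in W_0\times W_1$ with $vw\notin E(\widetilde G)$ and $|\{v,w\}\cap Z|\le 1$. At least one of $v,w$ lies outside $Z$, and the pair contributes $1$ to $\bar d$ of that vertex. Hence the number of such pairs is at most the left-hand side above, which is at most $(7\epsilon/\xi)m^2$. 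Since $\epsilon\ll\xi$ (for example $\xi=(7\epsilon)^{1/3}$ gives $7\epsilon/\xi=\xi^2$), this is at most $2\xi m^2$, as claimed.

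The only delicate point is the identity converting the sum over arcs into $\sum_w d^+(w)d(w)$. It uses that $G$ is oriented, so $d(w)$ counts distinct neighbours in $\widetilde G$ without double counting, and that every neighbour of $w\in W_i$ lies in $W_{1-i}$, which comes from the independence of $W_0$ and $W_1$. Once these are in place, the rest is the routine averaging shown above.
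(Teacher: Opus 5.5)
Your proof is correct and follows the paper's argument. Both double count $\sum_{w} d^+(w)\bigl(d^+(w)+d^-(w)\bigr)$ over the arcs of $G$, bound it below using (\ref{eq3}), and cap it above using $d(w)\le |W_{1-i}|\le (1/5+\epsilon)m$. The only difference is that you weight the deficit directly by $d^+(w)>\xi m$ for $w\notin Z$, whereas the paper argues by contradiction through the auxiliary set $T$ of vertices with $d^n(w)>\xi m$. Your version is shorter and gives the sharper bound $(7\epsilon/\xi)m^2$.
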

\begin{proof}
Suppose there are more than $2\xi m^2$ pairs $(v,w)\in W_0\times W_1$ such that $vw \notin E(\widetilde{G})$  and $|\{v,w\}\cap Z|\leq 1.$
Let $Z_i:= W_i\cap Z$, and for $u\in W_i$ let $d^n(u)=|W_{1-i}|- (|N^+_G(u)|+|N^-_G(u)|).$

Let $v\in W_0\setminus Z$ and $w\in W_1\setminus (N^+_G(v)\cup N^-_G(v))$. Then $vw \notin E(\widetilde{G})$ and $|\{v,w\}\cap Z|\leq 1$. Thus
\begin{equation}\label{eq4}
\sum_{w\in (W_0\cup W_1)\setminus Z} d^n(w) \geq 2\xi m^2.
\end{equation}
Let $T:= \{w\in (W_0\cup W_1)\setminus Z: d^n(w)>\xi m\}.$
Then \begin{equation}\label{eq5}|T|\geq {\xi}m,\end{equation} 
since otherwise, $$\sum_{w\in (W_0\cup W_1)\setminus Z} d^n(w) < |T|\cdot m+ {\xi}m(m-|T|)< 2\xi m^2 $$
contradicting (\ref{eq4}).
We have $|W_i|\leq (1/5+\epsilon)m$, and so, for every $w\in T$, $$|N^+_G(w)|+|N^-_G(w)|< (1/5+\epsilon -\xi)m.$$
Let $q:=\sum_{w\in W_0\cup W_1} |N^+_G(w)|(|N^+_G(w)|+|N^-_G(w)|).$ Then
$$q \leq(1/5+\epsilon)m\sum_{w\in Z}|N^+_G(w)| + \sum_{w\in (W_0\cup W_1)\setminus Z} |N^+_G(w)|(|N^+_G(w)|+|N^-_G(w)|)< $$
$$(1/5+\epsilon)m\cdot ||G|| - \xi m\sum_{w\in T}|N^+_G(w)|<(1/5+\epsilon -\xi^3)m\cdot ||G||,$$
because by (\ref{eq5}), $|T|\geq \xi m$, and if $w\in T$, then $w\notin Z$ and so $|N^+_G(w)|>\xi m.$
At the same time, by (\ref{eq3}),
$$q= \sum_{uv\in E(G)}(|N^+_G(u)|+|N^+_G(v)|)\geq (1/5-6\epsilon)m\cdot ||G||$$
contradicting the definition of $\xi$.
\end{proof}
We will now show that $\|G\|$ is approximately $|W_0|\cdot |W_1|.$ 
\begin{lemma}\label{non-ext-lem1}
 $||G||\geq |W_0|\cdot |W_1|-\sqrt{\xi}m^2.$
\end{lemma}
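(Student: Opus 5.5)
Since $W_1=B_1$ and $W_0\subseteq B_0$, and $D[A_1\cup B_1]$, $D[A_0\cup B_0]$ have no arcs, $G$ is bipartite with parts $W_0,W_1$. As $D$ is oriented, each pair $(v,w)\in W_0\times W_1$ carries at most one arc. Hence $\|G\|=|W_0|\cdot|W_1|-N$, where $N$ is the number of pairs in $W_0\times W_1$ that are non-adjacent in $\widetilde G$. Let $Z_i:=Z\cap W_i$, with $Z$ as in Lemma \ref{non-ext-lem0}. That lemma bounds the non-adjacent pairs with at most one endpoint in $Z$ by $2\xi m^2$. So it suffices to show $|Z_0|\cdot|Z_1|\le(\sqrt{\xi}-2\xi)m^2$.

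Two facts would drive the argument.

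\emph{Fact 1: no arc of $G$ has both ends in $Z$.} By (\ref{eq3}), every arc $uv$ of $G$ satisfies $|N^+_G(u)|+|N^+_G(v)|\ge(1/5-6\epsilon)m>2\xi m$.

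\emph{Fact 2: out-neighbourhoods of $Z$-vertices nearly fill $C$.} For $z\in Z_i$, (\ref{eq1}) and (\ref{eq-P2-new1}) give $|N^+_D(z)\cap C|\ge(1/5-\epsilon-\xi)m$. By (\ref{eq0}) this is all of $C$ except $O(\xi m)$ vertices.

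Suppose, for a contradiction, that $|Z_0|,|Z_1|\ge\xi^{1/2}m/2$ (otherwise $|Z_0||Z_1|$ is already small). Lemma \ref{non-ext-lem0} says the non-adjacent pairs in $Z_1\times(W_0\setminus Z_0)$ number at most $2\xi m^2$. So a typical $z\in Z_1$ is adjacent to all but $O(\sqrt{\xi}m)$ vertices of $W_0\setminus Z_0$. Since $|N^+_G(z)|\le\xi m$, almost all of these adjacencies are arcs $y\to z$ with $y\in W_0\setminus Z_0$. For such an arc, triangle-freeness forces $N^+_D(y)\cap N^+_D(z)=\emptyset$. With Fact 2, this means $N^+_D(y)\cap C$ has size $O(\xi m)$. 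Then (\ref{eq1}) and (\ref{eq-P2-new1}) give $|N^+_G(y)|\ge(1/5-O(\xi))m$, while $|W_1|\le(1/5+\epsilon)m$. So almost every $y\in W_0\setminus Z_0$ sends arcs to all but $O(\xi m)$ vertices of $W_1$. The symmetric argument, using $Z_0$ in place of $Z_1$, shows that almost every $w\in W_1\setminus Z_1$ sends arcs to all but $O(\xi m)$ vertices of $W_0$.

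If both $W_0\setminus Z_0$ and $W_1\setminus Z_1$ have size at least $\xi^{1/4}m$, there is then a pair $y\in W_0\setminus Z_0$, $w\in W_1\setminus Z_1$ with both $yw$ and $wy$ arcs. This contradicts that $D$ is oriented. Otherwise, say $|W_0\setminus Z_0|<\xi^{1/4}m$. By Fact 1 every arc of $G$ has an endpoint outside $Z$. Vertices of $Z_1$ contribute at most $\xi m$ out-arcs each, so $\|G\|\le \xi^{1/4}m^2+|W_1\setminus Z_1|\cdot|W_0\setminus Z_0|+\xi m^2$, which is $O(\xi^{1/4}m^2)$. Since $\xi\ll\eta$, this contradicts (\ref{eq-case1}).

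The main obstacle is the bookkeeping in the typicality steps. We must make sure that the $O(\sqrt{\xi})$ and $O(\xi^{1/4})$ losses from averaging Lemma \ref{non-ext-lem0} over $Z_1$ and $Z_0$ combine with the other error terms into the clean bound $\sqrt{\xi}m^2$. If they do not, I would run the contradiction argument with threshold $|Z_0|,|Z_1|\ge\xi^{1/2}m/2$ as above, and absorb the remaining terms using $\epsilon\ll\xi\ll\eta$. The conclusion $N\le 2\xi m^2+|Z_0||Z_1|\le\sqrt{\xi}m^2$ then follows.
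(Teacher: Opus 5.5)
\textbf{Verdict: there is a genuine gap in your final case.} Your reduction to bounding $|Z_0|\cdot|Z_1|$ is correct, and so are Facts 1 and 2. The first case, where both $W_0\setminus Z_0$ and $W_1\setminus Z_1$ have size at least $\xi^{1/4}m$, also works:
\begin{itemize}
\item A typical $z\in Z_1$ receives arcs from almost all of $W_0\setminus Z_0$; symmetrically, a typical $z'\in Z_0$ receives arcs from almost all of $W_1\setminus Z_1$.
\item Disjointness of out-neighbourhoods together with Fact 2 pushes almost all the out-degree of those in-neighbours onto the opposite side.
\item Double counting then yields a pair carrying arcs in both directions, contradicting that $D$ is oriented.
\end{itemize}

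The problem is the last case. The two alternatives are not symmetric. For $|W_0\setminus Z_0|<\xi^{1/4}m$, your bound $\|G\|\le \xi^{1/4}m^2+|W_1\setminus Z_1|\cdot|W_0\setminus Z_0|+\xi m^2$ is false, because it omits the arcs from $W_1\setminus Z_1$ into $Z_0$. Fact 1 only says such arcs have one end outside $Z$. Vertices of $Z_0$ have small \emph{out}-degree in $G$, but nothing bounds their in-degree from $W_1\setminus Z_1$.

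In fact your own symmetric argument shows the opposite of what you need. It says almost every $w\in W_1\setminus Z_1$ sends arcs to all but $O(\xi m)$ vertices of $W_0$, hence to almost all of $Z_0$. So $|E_G(W_1,W_0)|$ can be of order $|W_1\setminus Z_1|\cdot m/5$, and (\ref{eq-case1}) is not contradicted. Only the other alternative, $|W_1\setminus Z_1|<\xi^{1/4}m$, contradicts (\ref{eq-case1}), via $|E_G(W_1,W_0)|\le |W_1\setminus Z_1|\, m+\xi m|Z_1|$.

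Consider the configuration where $W_0$ is essentially $Z_0$, $W_1\setminus Z_1$ is almost complete to $W_0$, and $|Z_1|\ge\sqrt{\xi}m/2$. It is locally consistent, so no argument about neighbourhoods of individual vertices will rule it out. What is missing is a global count of arcs into $C$.

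The fix is essentially the paper's entire proof.
\begin{itemize}
\item Any $w\in W_1$ adjacent to some $z'\in Z_0$ has $|N^+_G(w)|\ge(1/5-6\epsilon-\xi)m$ by (\ref{eq3}). Such $w$ exist here, e.g.\ the in-neighbours of a typical $z'\in Z_0$. Hence $|W_0|\ge(1/5-2\xi)m$.
\item By (\ref{eq1}), every out-arc of a vertex of $W_0\cup W_1$ either lies in $G$ or enters $C$.
\item By (\ref{eq-P2-new1}) and (\ref{eq0}), $C$ absorbs at most $|C|(1/5+\epsilon)m\le(1/25+\xi)m^2$ arcs.
\item Therefore $\|G\|\ge(1/5-\epsilon)m(|W_0|+|W_1|)-(1/25+\xi)m^2\ge(1/25-4\xi)m^2$, which already exceeds $|W_0|\cdot|W_1|-\sqrt{\xi}m^2$.
\end{itemize}
Once $|W_0|\approx m/5$ is known, no case analysis on $Z$ is needed. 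The paper obtains the required vertex $w$ from (\ref{eq-case1}) after discarding $Z_1$ and the few vertices of $W_1$ non-adjacent to all of $Z_0$.
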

\begin{proof}Suppose for at least $\sqrt{\xi} m^2$ pairs $(v,w)\in W_0\times W_1$, $vw \notin E(\widetilde{G})$. 

Let $Z$ be as in Lemma \ref{non-ext-lem0} and let $Z_i:= W_i\cap Z$. By Lemma \ref{non-ext-lem0} we may assume that all but at most $2\xi m^2$ pairs $(v,w)\in W_0 \times W_1$ are such that if $\{v,w\}\notin E(\widetilde{G})$ then $\{v,w\}\subseteq Z.$\\
First note that we may assume   \begin{equation}\label{eq-ne-part2-n1}|Z_0|>\sqrt{\xi}m.
\end{equation}Indeed, if  $|Z_0|\leq \sqrt{\xi}m$, then the number of pairs $(v,w)\in W_0 \times W_1$ such that  $\{v,w\}\notin E(\widetilde{G})$ is less than $2\xi m^2+ |Z_0|\cdot |Z_1|< \sqrt{\xi}m^2$, and so the lemma holds.

By the definition of $Z$ for every $w\in Z$, $|N^+_G(w)|\leq \xi m$. Therefore, if $w\in Z$ and $v\in (W_0\cup W_1)\setminus Z$ are such that $vw \in E(\widetilde{G})$ (i.e. $wv\in E(G)$ or $vw\in E(G)$), then, by (\ref{eq3}), 
\begin{equation}\label{eq-ne-part2-n2}|N^+_G(v)|\geq (1/5-6\epsilon - \xi)m.
\end{equation}

Let $T_1$ be the set of vertices $v\in W_1\setminus Z_1$ such that for every $w\in Z_0$, $vw \notin E(\widetilde{G}).$
Then, by Lemma \ref{non-ext-lem0}, $|T_1|\cdot |Z_{0}| \leq 2\xi m^2$, and so by (\ref{eq-ne-part2-n1}),
$$|T_1|\leq 2\xi m^2/|Z_{0}|<2\sqrt{\xi} m.$$

\begin{claim}\label{nonext-case2-cl1n}
    $|W_0|\geq (1/5-2\xi)m$
\end{claim}
{\it{Proof of Claim \ref{nonext-case2-cl1n}}.}
By the definition of the case,  $|E_D(W_1, W_0)|=|E_D(B_1, N^+_D(B_1)\cap B_0)|=|E_D(B_1, B_0)|\geq \eta m^2.$ Consequently, $|W_1\setminus (Z_1\cup T_1)|\geq \eta m/2$, since otherwise
$$|E_D(W_1, W_0)|<|W_1\setminus (Z_1\cup T_1)|m+ |Z_1|\xi m + |T_1|m<\eta m^2.$$
By (\ref{eq-ne-part2-n2}), for $v\in W_1\setminus (Z_1\cup T_1)$, $|N^+_G(v)|\geq (1/5-2\xi)m$. Therefore, since $W_1\setminus (Z_1\cup T_1)\neq \emptyset$, by the definition of $W_0$, $|W_0|\geq  (1/5-2\xi)m$. 
$\Box$

From Claim \ref{nonext-case2-cl1n}, we have
\begin{equation}\label{eq-ne-part2-n3}|E_D(W_0\cup W_1, V(D))|\geq (1/5-\epsilon)m (|W_0|+|W_1|)\geq (2/25-3\xi)m^2, \end{equation}
and at the same time, by (a) and (b),
\begin{equation}\label{eq-ne-part2-n4}|E_D(W_0\cup W_1, V(D))|\leq ||G||+|E_D(W_0\cup W_1, C)|.
\end{equation}
By (\ref{eq-P2-new1}) and (\ref{eq0}), $|E_D(W_0\cup W_1, C)|\leq |C|(1/5+\epsilon)m \leq (1/25 +\xi)m^2.$
 Consequently, by (\ref{eq-ne-part2-n3}) and (\ref{eq-ne-part2-n4}),
$$||G||\geq (1/25- 4\xi)m^2> |W_0|\cdot |W_1| -\sqrt{\xi}m^2.$$

\end{proof}
We will now show a stronger statement. We argue that $W_0$ is approximately the whole $B_0$ and that almost all edges in $D$ between $B_1$ and $B_0$ are directed from $B_1$ to $B_0.$
\begin{lemma}\label{non-ext-lem2}
$|E_D(B_1, B_0)|\geq (1-\mu)|B_0|\cdot |B_1|$ where $\mu=O(\xi^{1/8}/\sqrt{\eta})$. 
\end{lemma}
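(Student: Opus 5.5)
The plan is to upgrade Lemma~\ref{non-ext-lem1}, which says that $G$ is almost complete bipartite between $W_0$ and $W_1$, in two steps: first show that $W_0$ is almost all of $B_0$, then show that almost every arc of $G$ is oriented from $W_1$ to $W_0$. Since $W_1=B_1$, $W_0\subseteq B_0$, and both $B_0$ and $B_1$ are independent in $D$, the digraph $G$ is bipartite with parts $W_0,W_1$.

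\emph{Sizes.} By Claim~\ref{nonext-case2-cl1n} and (\ref{eq-P2-new1}), $(1/5-2\xi)m\le |W_0|\le |B_0|\le (1/5+\epsilon)m$, so $|B_0\setminus W_0|\le 3\xi m$. Also $|W_1|=|B_1|\ge(1/5-\epsilon)m$. Hence $|W_0||W_1|\ge |B_0||B_1|-O(\xi)m^2$, and it suffices to show $|E_G(W_1,W_0)|\ge |W_0||W_1|-\mu' m^2$ with $\mu'=O(\xi^{1/8}/\sqrt\eta)$.

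\emph{Structure of the orientation.} By Lemma~\ref{non-ext-lem1}, the underlying graph $\widetilde G$ misses at most $\sqrt\xi m^2$ pairs of $W_0\times W_1$. Call a vertex \emph{good} if it is non-adjacent in $\widetilde G$ to at most $\xi^{1/4}m$ vertices of the opposite side; all but $O(\xi^{1/4})m$ vertices are good. On the good vertices, $G$ is a nearly complete bipartite tournament. A bipartite tournament containing a directed cycle contains a directed $C_4$, and the same holds robustly here: for a cycle through good vertices, any missing pair along the way can be bypassed using common neighbours. Since $D$ has no $C_4$, the good part is essentially acyclic. So I would fix a linear order $\sigma$ of the good vertices in which almost every present arc goes forward; the out-degree of a vertex is then roughly the number of vertices of the other side that come after it in $\sigma$.

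\emph{Using the degree condition and $\eta$.} Let $F$ be the set of arcs from $W_0$ to $W_1$, and suppose $|F|\ge\mu' m^2$. Inequality (\ref{eq3}) says $|N^+_G(u)|+|N^+_G(v)|\ge(1/5-6\epsilon)m\approx|W_0|\approx|W_1|$ for every arc $uv$. In terms of positions in $\sigma$, an arc $u\to v$ with $v$ after $u$ forces the number of opposite-side vertices after $u$ plus those after $v$ to be at least about $m/5$. Case~1 gives at least $\eta m^2$ arcs $W_1\to W_0$, so some $W_1$-vertices occur early in $\sigma$ with many $W_0$-vertices after them. Arcs of $F$ mean some $W_0$-vertices occur before many $W_1$-vertices. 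I would take a $W_0$-vertex $x$ that is early relative to $\Omega(\mu')m$ of $W_1$ and an early $W_1$-vertex $y$ with $\Omega(\eta)m$ out-neighbours in $W_0$. Applying (\ref{eq3}) to arcs leaving the later vertices then forces about $m/5$ opposite-side vertices after them, so the sides must alternate in long blocks. Each such block configuration closes a directed $C_4$ through good vertices unless the block of $W_0$-vertices preceding $W_1$-vertices has measure $O(\xi^{1/8}/\sqrt\eta)m$. The factor $1/\sqrt\eta$ comes from averaging over the $\eta m^2$ arcs $W_1\to W_0$ when choosing $y$, and $\xi^{1/8}$ comes from the square-root losses in passing to good vertices twice.

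\emph{Main obstacle.} The hardest step is the last one: converting the ``sum of out-degrees $\approx m/5$ along every arc'' condition from (\ref{eq3}) into a contradiction with a transitive ordering that has a substantial amount of $W_0\to W_1$ arcs. It must be done quantitatively, with the exceptional sets controlled. I would first try to prove it by showing that the position function along $\sigma$ must be essentially a two-block order, with all of $W_1$ before all of $W_0$.
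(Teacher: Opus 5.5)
There is a genuine gap: the step that actually proves the lemma is not carried out. Everything up to choosing $\sigma$ is preparation. The decisive part is turning (\ref{eq3}) together with the Case~1 hypothesis $|E_D(B_1,B_0)|\ge\eta m^2$ into the bound, and you leave exactly this as your ``main obstacle''. Your sketch for it cannot work as stated. If almost every arc goes forward in $\sigma$, there are essentially no directed $C_4$'s to close, so the contradiction has to come from (\ref{eq3}) and $\eta$. Your account of the exponents $\xi^{1/8}$ and $1/\sqrt\eta$ is not backed by any computation.

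The preparatory steps also have holes.
\begin{itemize}
\item Robust acyclicity does not follow by ``bypassing'' missing pairs. A shortest directed cycle among good vertices can be a directed $6$-cycle $v_1v_2\cdots v_6$ whose three chords $v_iv_{i+3}$ are all non-adjacent pairs (each such chord, in either orientation, would create a $C_4$). This is consistent with every vertex being good. Getting few backward arcs needs a separate argument, e.g.\ that for good $x,x'$ on the same side, $N^+_G(x)\cap N^-_G(x')$ and $N^+_G(x')\cap N^-_G(x)$ cannot both be nonempty.
\item Claim~\ref{nonext-case2-cl1n} is proved only inside the proof by contradiction of Lemma~\ref{non-ext-lem1}, under the extra assumption $|Z_0|>\sqrt\xi m$. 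So $|B_0\setminus W_0|\le 3\xi m$ is not available a priori.
\end{itemize}

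The idea you are missing, which makes the ordering unnecessary, is to work with a good vertex of minimum out-degree. This is the approximate analogue of the last vertex of $\sigma$.

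Set $\nu=\xi^{1/4}/\eta$, and let $W_i'$ be the vertices of $W_i$ adjacent in $\widetilde G$ to all but $\nu|W_{1-i}|$ vertices of $W_{1-i}$. Then $|W_i\setminus W_i'|\le\xi^{1/4}m$, by Lemma~\ref{non-ext-lem1} and $|W_{1-i}|\ge\eta m$. The paper takes $x\in W_0'$ minimizing $t=|N^+_G(x)|$. Applying (\ref{eq3}) to the arc between $x$ and each of its at least $(1-\nu)|W_1|$ neighbours $z\in W_1$ gives $|N^+_G(z)|\ge(1/5-6\epsilon)m-t$. Hence $|E_G(W_1,W_0)|\ge(m/5-t-2\nu m)|W_1|$.

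If $t\le\sqrt\nu m$, this is the lemma with $\mu=O(\sqrt\nu)=O(\xi^{1/8}/\sqrt\eta)$; that is where the exponents come from. Since $|E_D(B_1,B_0)|\le|W_0||W_1|$, it also gives $|W_0|\ge(1-\mu)|B_0|$ for free.

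The paper then rules out $t>\sqrt\nu m$ by a no-$C_4$ count in a second out-neighbourhood. One clean way to run it:
\begin{itemize}
\item The symmetric argument at a good $y\in W_1$ of minimum out-degree $s$, combined with $|E_G(W_1,W_0)|\ge\eta m^2$, forces $s\ge\eta m$.
\item For every good $z\in W_0$ that is an out-neighbour of an out-neighbour of $x$, the absence of $C_4$ gives $N^+_G(z)\cap N^-_G(x)=\emptyset$. So $t\le|N^+_G(z)|\le t-|N^+_G(x)\cap N^-_G(z)|+\nu m$.
\item Double counting the arcs from $N^+_G(x)\cap W_1'$ to $W_0'$ then gives $(t-\xi^{1/4}m)(s-\xi^{1/4}m)\le\nu m^2$.
\end{itemize}
This is impossible when $t>\sqrt\nu m$, $s\ge\eta m$ and $\nu\ll\eta^2$ (which holds for the paper's choice of constants).
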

\begin{proof}
In view of Lemma \ref{non-ext-lem1},
$$|E_G(W_1, W_0)|+|E_G(W_0,W_1)|\geq |W_0|\cdot |W_1|- \sqrt{\xi} m^2.$$
Let $\nu:=\frac{\xi^{1/4}}{\eta}$ and note that $\nu \ll \eta$. Let $W_i'= \{v\in W_i: |W_{1-i} \cap (N^+_G(v)\cup N^-_G(v))| \geq (1-\nu)|W_{1-i}|\}.$
We have
$$|W_1|\cdot |W_0| -\sqrt{\xi} m^2\leq |W_i'|\cdot |W_{1-i}|+ (1-\nu)|W_i\setminus W_i'|\cdot |W_{1-i}|\leq |W_1|\cdot |W_0|-\nu |W_i\setminus W_i'|\cdot |W_{1-i}|.$$
Therefore, \[
    \sqrt{\xi}m^2 \geq \nu|W_i \setminus W'_i|\cdot |W_{1 - i}|
\]
Since $|E_D(W_1,W_0)|=|E_D(B_1, B_0)|\geq \eta m^2$, $\min{|W_{1-i}|, |W_i|}\geq \eta m$, and so using the definition of $\nu$,
\begin{equation}\label{eq9}
|W_i\setminus W_i'|\leq \frac{\sqrt{\xi}m^2}{\nu |W_{1-i}|}\leq \xi^{1/4}m\leq \nu |W_i|.\end{equation}
Consequently,
$$|W_i'|\geq (1-\nu)|W_i|.$$

Let $t:= \min_{v\in W_0'}|N^+_{G}(v)|$ and suppose $x\in W_0'$  satisfies $t= |N^+_{G}(x)|.$ 
Then 
\begin{equation}\label{eq-ne-t}
    t\leq (1/5-\eta)m.
\end{equation}
Indeed, if $t>(1/5-\eta)m$, then using the fact that $G$ is an oriented graph, $|W_i|< m/4$ and (\ref{eq9}), we have

$$|E_D(W_1, W_0)|\leq |W_1|\cdot|W_0\setminus W_0'|+|W_1\setminus W_1'|\cdot |W_0'|+ (\eta+\epsilon)m|W_1'|< (\eta/2 +\nu)m^2,$$
which is less than $\eta m^2$ since $\nu \ll \eta$. 

By the definition of $x$, from (\ref{eq3}), for every $z\in N^+_{G}(x)\cup N^-_{G}(x)$,  $|N^+_{G}(z)|\geq (1/5-6\epsilon)m- t.$ 
Consequently, since $x\in W_0'$, $|N^+_{G}(x)\cup N^-_{G}(x)|\geq (1-\nu)|W_1|$, and so
\begin{equation}\label{eq-E_G}
    |E_{G}(W_1, W_0)|\geq (m/5-t -6\epsilon m)(1-\nu)|W_1|\geq (m/5- t-2\nu m)|W_1|.
\end{equation}
We now consider two cases based on the value of $t$.

If $t\leq \sqrt{\nu}m$, then from (\ref{eq-E_G}), 
$$|E_D(B_1, B_0)|\geq (m/5- 3\sqrt{\nu} m)|W_1|\geq (1- O(\sqrt{\nu}))|B_1|\cdot |B_0|,$$
because $|B_1|=|W_1|$ and $|B_0|\leq m/5+\epsilon m$.

Now suppose $t>\sqrt{\nu}m$. We will show that this case leads to a contradiction. Since $|E_G(W_1, W_0\setminus W_0')|<\nu |W_1|\cdot |W_0|$, from (\ref{eq-E_G}), there exists $v\in W_1'$ such that $|N_G^-(v)|\geq m/5 -t -3\nu m$. Let $A_1= N^-_G(v)$, $A_2=N^+_G(v)$, and $A_3=N^+_G(B)$. Then $A_1\cap A_2=\emptyset$, $|A_2|\geq t $ and since there is no $C_4$, for every $w\in A_3$, $N^+_G(w)\cap A_1=\emptyset$. Since $|W_0\setminus A_1|<t+4\nu m$, we have
$$(t+4\nu m)|A_3|> |E_G(W_0\setminus A_1, A_3)|+ |E_G(A_3, W_0\setminus A_1)|\geq |E_G(A_2, A_3)|+t|A_3|.$$
 Thus
 $$4\nu m |A_3|> |E_G(A_2,A_3)|\geq (m/5-t-6\epsilon m)|A_2|\geq (m/5-t-6\epsilon m)t.$$
However, since $\nu \ll \eta$ and $t>\sqrt{\nu}m$, by (\ref{eq-ne-t}),
$(m/5-t-6\epsilon m)t> \sqrt{\nu} m^2/6$, contradicting $\nu \ll \eta \leq 1$.

\end{proof}

 We have the following simple observation.
\begin{lemma}\label{non-ext-lem-comput}
Let $0<\xi_1, \xi_2<1/2$. For three distinct non-empty sets $X, Y,Z$ of vertices if $|E_D(X,Y)|\geq (1-\xi_1)|X|\cdot |Y|$ and $|E_D(Y,Z)|\geq (1-\xi_2)|Y|\cdot |Z|$, then
$$|E_D(Z, X)|\leq 2(\xi_1+\xi_2)|Z|\cdot |X|.$$
\end{lemma}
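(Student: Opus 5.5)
The plan is to use only that $D$ is triangle-free, i.e. its underlying graph has no triangle; no $C_4$-freeness is needed. Every edge of $E_D(Z,X)$ is a pair $(z,x)$ with $z\in Z$, $x\in X$ and $zx\in E(D)$. If there were $y\in Y$ with $xy\in E(D)$ and $yz\in E(D)$, then $x,y,z$ would span a triangle in the underlying graph. Here I use that the sets are pairwise disjoint. In the paper's applications the sets $A_i,B_i,C$ are disjoint, and distinctness should be read this way: if $X,Y,Z$ overlapped, the vertices $x,y,z$ need not be distinct. So for each edge $zx$ of $E_D(Z,X)$, no $y\in Y$ lies in both $N^+_D(x)$ and $N^-_D(z)$. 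Equivalently, $y$ is a \emph{non-out-neighbor} of $x$ in $Y$ or a \emph{non-in-neighbor} of $z$ in $Y$.

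To count, for $x\in X$ let $a(x):=|Y\setminus N^+_D(x)|$ and for $z\in Z$ let $b(z):=|Y\setminus N^-_D(z)|$. The hypotheses give
$$\sum_{x\in X}a(x)\le \xi_1|X||Y|,\qquad \sum_{z\in Z}b(z)\le \xi_2|Y||Z|.$$
By the triangle observation, every edge $zx\in E_D(Z,X)$ satisfies $a(x)+b(z)\ge |Y|$. Hence $a(x)\ge |Y|/2$ or $b(z)\ge |Y|/2$.

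Let $X^*:=\{x: a(x)\ge |Y|/2\}$ and $Z^*:=\{z: b(z)\ge |Y|/2\}$. Markov's inequality gives $|X^*|\le 2\xi_1|X|$ and $|Z^*|\le 2\xi_2|Z|$. Every edge of $E_D(Z,X)$ has its tail in $Z^*$ or its head in $X^*$. Therefore
$$|E_D(Z,X)|\le |Z^*||X|+|Z||X^*|\le 2(\xi_1+\xi_2)|Z||X|.$$

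There is no real obstacle here. The only points needing care are the disjointness of the sets, so that the triangle $x,y,z$ is genuine, and the Markov threshold $|Y|/2$, which is what produces the factor $2$. The hypothesis $\xi_1,\xi_2<1/2$ only ensures the bound is nontrivial; it is not used in the argument.
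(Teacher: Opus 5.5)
Your proof is correct and is essentially the paper's argument. The paper also discards the at most $2\xi_2|Z|$ vertices $z\in Z$ with $|N^-_D(z)\cap Y|<|Y|/2$ (Markov), and then uses triangle-freeness to show that every remaining $z$ has at most $2\xi_1|X|$ out-neighbors in $X$; this is your observation that such out-neighbors lie in $X^*$.

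One small remark: pairwise disjointness of $X,Y,Z$ is not actually needed, since $D$ has no loops, so arcs $x\to y\to z\to x$ automatically force $x,y,z$ to be distinct.
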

\begin{proof}
For at least $(1-2\xi_2)|Z|$ vertices $v\in Z$, $|N^-_D(v)\cap Y|\geq |Y|/2$. If for any such $v$, $|N^+_D(v)\cap X|>2\xi_1|X|$, then there is a triangle in the underlying graph. Thus $|E_D(Z, X)|\leq 2(\xi_1+\xi_2)|Z|\cdot |X|.$ 
\end{proof}
We have the following lemma which in addition to Case 1, will be used in Case 2.
\begin{figure}[h] 
    \centering
    \includegraphics[width=1\textwidth]{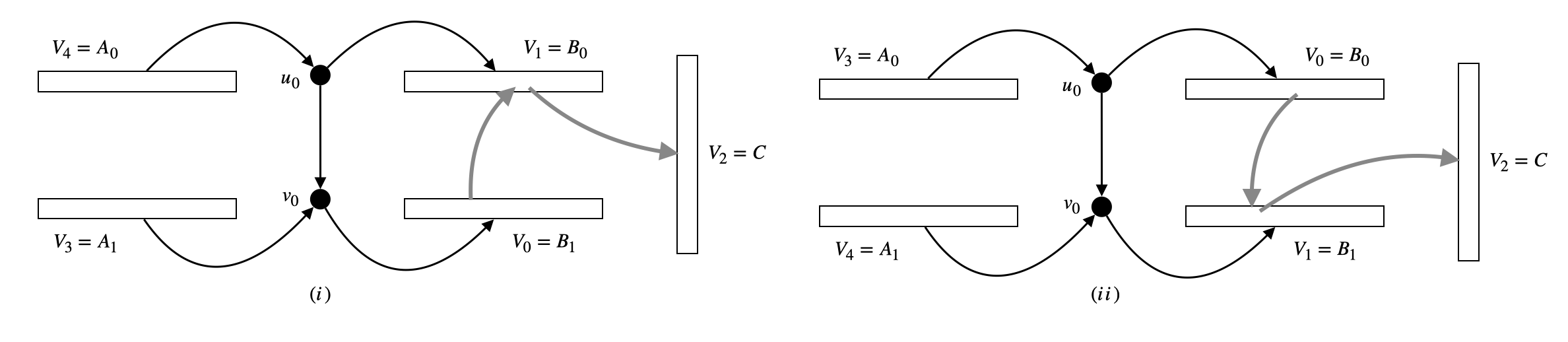} 
    \caption{Two cases in Lemma \ref{non-ext-lem3}}
    \label{fig:Lem-3-10}
\end{figure}
\begin{lemma}\label{non-ext-lem3}
Let $0<\epsilon \ll \xi$ and suppose $|E_D(V_0, V_1)|\geq (1-\xi)|V_0|\cdot |V_1|$ and $|E_D(V_1, V_2)|\geq (1-\xi)|V_1|\cdot |V_2|$ where $\{V_0,V_1\}=\{B_0, B_1\}$ and $V_2= C$. Let $(V_3, V_4)= (A_1, A_0)$ if $(V_0, V_1)=(B_1, B_0)$ and $(V_3, V_4)= (A_0, A_1)$ if $(V_0, V_1)=(B_0, B_1)$. Then
$V_0, V_1, V_2, V_3, V_4$ contains a $(m/5, \mu)$-blow up of a directed cycle of length five for $\mu=O(\xi).$
\end{lemma}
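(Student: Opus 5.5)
The plan is to propagate the two given dense pairs around the cycle $V_0\to V_1\to V_2\to V_3\to V_4\to V_0$ one step at a time. At each step I use three ingredients: the near-regular degrees from (\ref{eq-P2-new1}), the absence of triangles, and the absence of directed $C_4$'s. The size condition of the blow-up is immediate, since each of $A_0,A_1,B_0,B_1$ is an in- or out-neighbourhood and so has size $(1/5\pm\epsilon)m$ by (\ref{eq-P2-new1}), while $|C|=(1/5\pm 4\epsilon)m$ by (\ref{eq0}). So it remains to show $|E_D(V_i,V_{i+1})|\geq (1-O(\xi))|V_i||V_{i+1}|$ for $i=2,3,4$. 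I treat the case $(V_0,V_1)=(B_1,B_0)$, $(V_3,V_4)=(A_1,A_0)$; the other case is symmetric with the indices $0,1$ swapped.

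\emph{Step 1 (typical vertices of $V_2=C$).} Since $|E_D(V_1,V_2)|\geq(1-\xi)|V_1||V_2|$ and $|N^-_D(c)|\leq(1/5+\epsilon)m$, all but $O(\sqrt\xi)m$ vertices $c\in C$ have at least $(1-\sqrt\xi)|B_0|$ in-neighbours in $B_0$. Call these vertices typical.

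I then locate the out-neighbours of a typical $c$.
\begin{itemize}
\item They avoid most of $B_0$, because $D$ is oriented and $c$'s in-neighbourhood already almost covers $B_0$.
\item They avoid $B_1$ up to $O(\xi)$ mass. Otherwise, by Lemma \ref{non-ext-lem-comput} applied to $B_1\to B_0\to C$, we would get many triangles.
\item They hit few typical vertices of $C$. Two typical vertices of $C$ share an in-neighbour in $B_0$, so an edge between them would create a triangle.
\item They contain no vertex of $A_0$. If $c\to a$ with $a\in A_0$, then for any $b\in N^-_D(c)\cap B_0$ the closed walk $b\,c\,a\,u_0\,b$ is a directed $C_4$, since $a\to u_0\to b$.
\end{itemize}
Hence at least $(1/5-O(\sqrt\xi))m$ out-neighbours of a typical $c$ lie in $A_1$. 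As $|A_1|\leq(1/5+\epsilon)m$, summing over typical $c$ gives $|E_D(C,A_1)|\geq(1-O(\sqrt\xi))|C||A_1|$.

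\emph{Steps 2 and 3 (around through $A_1$ and $A_0$).} I repeat the same argument, shifted one position. Typical $a\in A_1$ have almost all of $C$ as in-neighbours. Their out-neighbours then avoid:
\begin{itemize}
\item $B_1$, because $D[A_1\cup B_1]$ has no arcs;
\item $C$ and most of $A_1$, by orientation and triangle-freeness against the common in-neighbourhood in $C$;
\item $B_0$, because $a\to b$ with $b\in B_0$ together with $b\to c\to a$ gives a triangle for a suitable typical $c$.
\end{itemize}
Hence almost all out-neighbours of a typical $a$ lie in $A_0$, which gives $A_1\to A_0$ dense. Once more, typical $a'\in A_0$ have in-neighbourhood almost all of $A_1$. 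Their out-neighbours avoid:
\begin{itemize}
\item $B_0$, because $D[A_0\cup B_0]$ has no arcs;
\item $A_1$ and $A_0$, by orientation and triangles;
\item $C$, because $c\to a_1\to a'\to c$ would be a triangle.
\end{itemize}
Hence they go to $B_1$, which gives $A_0\to B_1$ dense and closes the cycle.

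\emph{Main obstacle.} The key structural input is the single $C_4$ argument in Step 1: it forces the out-neighbourhood of $C$ into $A_1$ rather than $A_0$, and this fixes the labelling $(V_3,V_4)$. The routine but delicate part is the error bookkeeping. Each step passes from pair density $1-O(\xi)$ to ``typical vertex'' statements with errors of order $\sqrt\xi$ or $\xi$, and I must check that these errors do not compound beyond $O(\xi)$ over three steps. Should square-root losses appear, I would instead count non-edges directly with Lemma \ref{non-ext-lem-comput}, whose losses are linear in $\xi_1+\xi_2$, and use the exact out-degree lower bound $(1/5-\epsilon)m$ against the upper bounds on $|A_i|,|C|$. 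This keeps every loss linear in $\xi$ and $\epsilon$, so that $\mu=O(\xi)$ as claimed.
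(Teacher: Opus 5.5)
Your proposal is correct and follows essentially the same route as the paper. You propagate density along $V_2\to V_3\to V_4\to V_0$ by subtracting, from the out-degree lower bound, the few edges forced into the wrong parts by orientation, by the no-arc facts for $D[A_j\cup B_j]$, by Lemma~\ref{non-ext-lem-comput}, and by the single directed-$C_4$ argument through $u_0$ (resp.\ $v_0$) that rules out $V_2\to V_4$. The paper does the bookkeeping at the pair level from the start, bounding $|E_D(V_i,V_i)|$ via the in-degree upper bound, which is exactly your fallback and gives $\mu=O(\xi)$ without the compounding square-root losses of your typical-vertex phrasing.
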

\begin{proof}
From (\ref{eq0}), for every $i$, $$(1-20\epsilon)m/5\leq |V_i|\leq (1+20\epsilon)m/5,$$
and  by the assumption $\epsilon \ll \xi$.

\begin{claim}\label{cl1-lem3} $|E_D(V_2, V_3)|\geq (1-9\xi)|V_2|\cdot |V_3|.$
\end{claim}
\noindent{\it Proof of Claim \ref{cl1-lem3}.} We have \begin{equation}\label{eq1-cl1-lem3}|E_D(V_2, V_1)|\leq \xi |V_1|\cdot |V_2|.\end{equation} 

In addition, by (\ref{eq-P2-new1}), \begin{equation}\label{eq2-cl1-lem3}|E_D(V_2, V_2)|\leq (1/5+\epsilon)m|V_2| - (1-\xi) |V_1|\cdot |V_2|\leq 2\xi |V_2|^2.\end{equation}
From Lemma \ref{non-ext-lem-comput}, we also have
\begin{equation}\label{eq3-cl1-lem3}
|E_D(V_2, V_0)|\leq 4\xi |V_0|\cdot |V_2|.
\end{equation}
Since $|E_D(V_1, V_2)|\geq (1-\xi)|V_1|\cdot |V_2|$, for at least $(1-\xi) |V_2|$ vertices $v\in V_2$, $|N^-_D(v)\cap V_1|>0$ and for any such vertex $v\in V_2$, $N^+_D(v)\cap V_4=\emptyset$ or there is a $C_4$. Thus
\begin{equation}\label{eq4-cl1-lem3}
|E_D(V_2, V_4)|\leq\xi |V_2|\cdot |V_4|.
\end{equation}
Consequently, by (\ref{eq1-cl1-lem3}), (\ref{eq2-cl1-lem3}), (\ref{eq3-cl1-lem3}), and (\ref{eq4-cl1-lem3}), $$|E_D(V_2, V_3)|\geq (1/5-\epsilon)m|V_2|- 8\xi (1/5+\epsilon)m|V_2| >(1-9\xi)|V_2|\cdot |V_3|.$$
$\Box$

\begin{claim}\label{cl2-lem3} $|E_D(V_3, V_4)|\geq (1-O(\xi))|V_3|\cdot |V_4|.$
\end{claim}
\noindent{\it Proof of Claim \ref{cl2-lem3}.} Since $D$ is triangle-free, \begin{equation}\label{eq1-cl2-lem3}
|E_D(V_3, V_0)|=0.
\end{equation}
From Claim \ref{cl1-lem3},
\begin{equation}\label{eq2-cl2-lem3}
|E_D(V_3, V_2)|\leq 9\xi |V_3|\cdot |V_2|
\end{equation}
repeating the computations from (\ref{eq2-cl1-lem3}) and using (\ref{eq-P2-new1}) and Claim \ref{cl1-lem3} again, we have
\begin{equation}\label{eq3-cl2-lem3}
|E_D(V_3, V_3)|\leq O(\xi |V_3|^2).
\end{equation}
In addition, by Lemma \ref{non-ext-lem-comput}, we have
\begin{equation}\label{eq4-cl2-lem3}
|E_D(V_3, V_1)|\leq O(\xi |V_1|\cdot |V_3|).
\end{equation}
Thus, by (\ref{eq1-cl2-lem3}), (\ref{eq2-cl2-lem3}), (\ref{eq3-cl2-lem3}), and (\ref{eq4-cl2-lem3}), $|E_D(V_3, V_4)|\geq (1-O(\xi))|V_3|\cdot |V_4|$. $\Box$

Using a similar argument to the proof of Claim \ref{cl2-lem3}, we also have $|E_D(V_4, V_0)|\geq (1-O(\xi))|V_4|\cdot |V_0|.$
\end{proof}
 \begin{cor}\label{non-ext-cor1}
 $B_1,B_0,C, A_1, A_0$ contains a $(m/5,\mu)$-blow up of a directed cycle of length five for $\mu=O(\mu_{Lem \ref{non-ext-lem2}})$.
 \end{cor}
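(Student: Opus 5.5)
The plan is to apply Lemma \ref{non-ext-lem3} with $(V_0,V_1)=(B_1,B_0)$, $V_2=C$, and hence $(V_3,V_4)=(A_1,A_0)$. Lemma \ref{non-ext-lem3} needs two density conditions, $|E_D(B_1,B_0)|\geq (1-\xi')|B_1|\cdot|B_0|$ and $|E_D(B_0,C)|\geq (1-\xi')|B_0|\cdot|C|$, for some $\xi'$ with $\epsilon\ll\xi'$. It then yields a $(m/5,O(\xi'))$-blow-up on $B_1,B_0,C,A_1,A_0$. Lemma \ref{non-ext-lem2} (we are in Case 1) supplies the first condition with $\xi'=\mu_{Lem \ref{non-ext-lem2}}$. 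Since $\mu_{Lem \ref{non-ext-lem2}}=O(\xi^{1/8}/\sqrt{\eta})$ and $\epsilon\ll\xi\ll\eta$, we have $\epsilon\ll\mu_{Lem \ref{non-ext-lem2}}$, so the hypothesis $\epsilon\ll\xi'$ holds. Thus the only real work is the second condition.

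For $|E_D(B_0,C)|$, I would count the out-neighbourhoods of vertices of $B_0$.

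Set $\mu_2:=\mu_{Lem \ref{non-ext-lem2}}$. Since $|E_D(B_1,B_0)|\geq(1-\mu_2)|B_0||B_1|$, all but at most $2\mu_2|B_0|$ vertices $w\in B_0$ have at least $|B_1|/2$ in-neighbours in $B_1$. In particular such $w$ lie in $N^+_D(B_1)\cap B_0$, so by (b) we get $N^+_D(w)\subseteq B_1\cup C$. Because $D$ is oriented, the out-neighbours of $w$ in $B_1$ number at most $|B_1|$ minus its in-neighbours there. Summing over $w\in B_0$ gives
\[
|E_D(B_0,B_1)|\leq \mu_2|B_0||B_1|.
\]
Hence
\[
|E_D(B_0,C)|\geq \sum_{w\in W_0}|N^+_D(w)| - |E_D(B_0,B_1)| \geq (1/5-\epsilon)m\,(1-2\mu_2)|B_0| - \mu_2|B_0||B_1|.
\]
By (\ref{eq0}), $|C|\leq(1/5+4\epsilon)m$, and $|B_1|\leq(1/5+\epsilon)m$. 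Together these give $|E_D(B_0,C)|\geq(1-O(\mu_2))|B_0||C|$.

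Lemma \ref{non-ext-lem3} then applies with $\xi'=O(\mu_2)$, giving a blow-up with parameter $O(\mu_2)$. This is exactly the claim.

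The main obstacle is the bookkeeping in the second condition. One must ensure that vertices of $B_0$ outside $N^+_D(B_1)$ (at most $2\mu_2|B_0|$ of them) and the backward edges into $B_1$ cost only $O(\mu_2)$. One must also use (\ref{eq0}) so that the lower bound $(1/5-\epsilon)m$ on out-degree matches $|C|$ up to $O(\epsilon)\subseteq O(\mu_2)$. Once this is done, the rest is a direct invocation of Lemma \ref{non-ext-lem3}.
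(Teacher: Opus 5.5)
Your proposal is correct and follows the paper's own proof. You apply Lemma \ref{non-ext-lem3} with $(V_0,V_1,V_2,V_3,V_4)=(B_1,B_0,C,A_1,A_0)$, take the first density condition from Lemma \ref{non-ext-lem2}, and obtain $|E_D(B_0,C)|\geq(1-O(\mu_2))|B_0|\cdot|C|$ from property (b) together with the out-degree bound. Your explicit observation that orientation gives $|E_D(B_0,B_1)|\leq\mu_2|B_0|\cdot|B_1|$ spells out what the paper leaves implicit in its $O(\xi m^2)$ error term.
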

 \begin{proof}
Let $V_0:=B_1, V_1:=B_0, V_2:=C, V_3:=A_1, V_4:=A_0$, (see Figure \ref{fig:Lem-3-10}$(i)$)
 and let $\xi = \mu_{Lem. \ref{non-ext-lem2}}.$ By Lemma \ref{non-ext-lem2}, \begin{equation}\label{eq1-lem3}|E_D(V_0, V_1)|\geq (1-\xi)|V_0|\cdot |V_1|.
\end{equation}
Consequently, $|N^+_D(V_0)\cap V_1|\geq (1-\xi)|V_1|$ and for every $v\in N^+_D(V_0)\cap V_1$, $N^+_D(v)\subseteq V_0\cup V_2$. Thus

\begin{equation}\label{eq2-lem3}|E_D(V_1, V_2)|\geq (1/5-\epsilon)m |V_1|- O(\xi m^2)\geq (1-O(\xi))|V_1|\cdot |V_2|.\end{equation}
By Lemma \ref{non-ext-lem3}, $V_0, V_1, V_2, V_3, V_4$ contains a $(m/5, \mu)$-blow up of a directed cycle of length five for $\mu=O(\xi).$
 \end{proof}
We will now proceed to the second case which is substantially shorter.\\
 \noindent {\bf Case 2:} $|E_D(B_1, B_0)|\leq \eta m^2.$\\
Recall that $\epsilon \ll \eta \ll 1$, and that for every $v
 \in B_1$, $N^+_D(v)\subseteq B_0\cup C$. Consequently,
 \begin{equation}\label{eq1-case2}
 |E_D(B_1, C)|\geq |B_1|(1/5-\epsilon)m- \eta m^2,
 \end{equation}
 and so, 
\begin{equation}\label{eq1-case2.1}|E_D(B_1, C)|\geq (1-O(\eta))|B_1|\cdot |C|.\end{equation}
 From (\ref{eq1-case2}) and (\ref{eq-P2-new1}), for every $X\in \{A_0,A_1, B_0, C\}$
 \begin{equation}\label{eq2-case2}
 |E_D(X, C)|<2\eta m^2,
 \end{equation}
 as otherwise, $|E_D(X\cup B_1, C)|>m^2/25+\eta m^2/2$, and as a result
 there is $v\in C$ such that $|N^-_D(v)|>(1/5+\epsilon)m.$
 \begin{cor}\label{non-ext-cor2}
 $B_0, B_1,C, A_0, A_1$ contains a $(m/5,\mu)$-blow up of directed cycle of length five for $\mu=O(\sqrt{\eta})$.
 \end{cor}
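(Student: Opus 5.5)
In Case~2 the plan is to apply Lemma~\ref{non-ext-lem3} with $(V_0,V_1)=(B_0,B_1)$, $V_2=C$ and $(V_3,V_4)=(A_0,A_1)$, i.e.\ the cyclic order $B_0\to B_1\to C\to A_0\to A_1\to B_0$. That lemma needs two density conditions with $\xi=O(\sqrt{\eta})$:
\[|E_D(B_0,B_1)|\geq (1-\xi)|B_0|\cdot|B_1| \quad\text{and}\quad |E_D(B_1,C)|\geq (1-\xi)|B_1|\cdot|C|.\]
The second already follows from (\ref{eq1-case2.1}) with an $O(\eta)$ error. So the whole task is the first condition.

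To get it, I would bound the out-edges of $B_0$ by target set and show almost all of them must land in $B_1$.
\begin{itemize}
\item $D[A_0\cup B_0]$ has no arcs, so $E_D(B_0,B_0)=E_D(B_0,A_0)=\emptyset$.
\item $E_D(B_0,C)$ has at most $2\eta m^2$ edges by (\ref{eq2-case2}).
\item For $E_D(B_0,A_1)$: if $x\in B_0$ has an out-neighbour $y\in A_1$, then $u_0\,x\,y\,v_0\,u_0$ is a directed $C_4$, since $u_0x$, $xy$, $yv_0$ and $v_0u_0$ would all be arcs. That is a contradiction only if $v_0u_0$ were an arc, but the arc is $u_0v_0$. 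So this route fails, and I would instead use triangle-freeness together with the $C_4$-freeness of the remaining structure.
\end{itemize}

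My first attempt at $E_D(B_0,A_1)$ goes through $C$. Vertices of $A_1$ send arcs to $v_0$. Vertices of $C$ receive almost all of their in-edges from $B_1$, because $|E_D(B_1,C)|\approx |B_1||C|\approx m^2/25$ and in-degrees are at most $(1/5+\epsilon)m$. The out-edges of $C$ must then go somewhere. Excluding $C_4$'s of the form $v_0\to b\to c\to a\to v_0$ with $b\in B_1$, $c\in C$, $a\in A_1$ gives $|E_D(C,A_1)|=O(\eta m^2)$. The triangle $\{b,c\}$ plus neighbours excludes $B_1$ as a target for $C$. Reverse edges $C\to B_1$ are few, as in (\ref{eq1-cl1-lem3}), and $C\to C$ is few, as in (\ref{eq2-cl1-lem3}). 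Hence $C$ sends almost all of its out-edges to $A_0\cup B_0$.

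I would then argue $E_D(C,B_0)$ is small. A typical $c\in C$ has in-neighbours covering almost all of $B_1$, and $C_4$'s such as $b\to c\to x\to b'$ with $x\in B_0$, combined with the sparsity of $B_1\to B_0$ edges, should rule this out. This yields $|E_D(C,A_0)|\approx|C||A_0|$. Applying the same argument one step further gives $|E_D(A_0,A_1)|\approx |A_0||A_1|$. Then $A_1$ has almost all of its in-edges from $A_0$. Combined with triangle-freeness ($A_1$ and $B_0$ share the neighbour $u_0$'s structure) and the degree bounds (\ref{eq-P2-new1}), the edges $B_0\to A_1$ are few. Hence $B_0$ sends almost all of its out-edges into $B_1$, giving the first density condition.

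The main obstacle is this chain of "$C_4$-avoidance plus degree counting" steps that pins down $E_D(B_0,B_1)$. Each step loses $O(\sqrt{\eta})$ through Markov-type averaging. If the direct route via $C$ gets tangled, I would instead derive the full cycle $B_1\to C\to A_0\to A_1\to B_0\to B_1$ step by step, mirroring Claims~\ref{cl1-lem3} and~\ref{cl2-lem3} starting from the dense pair $(B_1,C)$. The final pair $(B_0,B_1)$ then follows exactly as $E_D(V_4,V_0)$ does at the end of Lemma~\ref{non-ext-lem3}.
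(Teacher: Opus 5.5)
There is a genuine gap, and it sits exactly at the step your argument depends on: showing that $|E_D(C,B_0)|$ is small. The ``$C_4$'' $b\to c\to x\to b'$ you invoke does not close up. There are no arcs inside $B_1$, so there is no arc $b'\to b$. The sparsity of $E_D(B_1,B_0)$ also says nothing about arcs from $C$ into $B_0$.

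In fact this step cannot be derived from the constraints you use. Orient the five sets as $B_1\to C\to B_0\to A_1\to A_0\to B_1$, with $u_0,v_0$ attached as in the definitions. This configuration satisfies all of the following:
\begin{itemize}
\item the degree bounds (\ref{eq-P2-new1}) and triangle-freeness;
\item no arcs in $D[A_0\cup B_0]$ or $D[A_1\cup B_1]$;
\item $E_D(B_1,B_0)=\emptyset$ and $E_D(B_1,C)$ complete;
\item no directed $C_4$ through $v_0$.
\end{itemize}
Yet in it $E_D(C,B_0)$ is complete and $E_D(B_0,B_1)=\emptyset$. The only thing that excludes it is the directed $4$-cycle $u_0\to x\to y\to a\to u_0$ with $x\in B_0$, $y\in A_1$, $a\in A_0$, and your proposal never uses this cycle.

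Your fallback of mirroring Claims~\ref{cl1-lem3} and~\ref{cl2-lem3} is circular for the same reason. In Claim~\ref{cl1-lem3}, the bound (\ref{eq3-cl1-lem3}) on $E_D(V_2,V_0)=E_D(C,B_0)$ comes from Lemma~\ref{non-ext-lem-comput}. That lemma needs the density of $(V_0,V_1)=(B_0,B_1)$, which is precisely what you are trying to prove.

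The missing idea is a small repair of your failed attempt on $E_D(B_0,A_1)$. Instead of closing $u_0\to x\to y$ through $v_0$, continue from $y$ into $A_0$ and close through $u_0$. Let $W:=A_1\cap N^+_D(B_0)$.
\begin{itemize}
\item Every $y\in W$ has no out-neighbour in $A_0$.
\item It has none in $A_1\cup B_1$, since $D[A_1\cup B_1]$ has no arcs.
\item By (\ref{eq2-case2}), $|E_D(A_1,C)|<2\eta m^2$.
\end{itemize}
Now suppose $|E_D(B_0,A_1)|\ge\sqrt{\eta}|B_0|\cdot|A_1|$. Then $|W|\ge\sqrt{\eta}|A_1|$, so the out-degree bound gives $|E_D(W,B_0)|>|W|(1/5-\epsilon)m-2\eta m^2>(1-\sqrt{\eta})|W|\cdot|B_0|$. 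Since $D$ is oriented, this contradicts $|E_D(B_0,W)|\ge\sqrt{\eta}|W|\cdot|B_0|$.

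With $E_D(B_0,A_1)$ small, your first bullet list already finishes the job: $|E_D(B_0,B_1)|\ge(1-O(\sqrt{\eta}))|B_0|\cdot|B_1|$. Together with (\ref{eq1-case2.1}), Lemma~\ref{non-ext-lem3} then applies with your labelling. No detour through $C$, $A_0$, $A_1$ is needed.
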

 \begin{proof}
 Let $V_0:=B_0, V_1:=B_1, V_2:=C, V_3:=A_0, V_4:=A_1$  (see Figure \ref{fig:Lem-3-10}$(ii)$). 
We claim that
 \begin{equation}\label{eq3-case2}
 |E_D(V_0, V_4)|<\sqrt{\eta}|V_0|\cdot |V_4|.
 \end{equation}
 Indeed, suppose (\ref{eq3-case2}) does not hold and we have $ |E_D(V_0, V_4)|\geq \sqrt{\eta}|V_0|\cdot |V_4|.$ Let $W:= V_4\cap N^+_D(V_0).$ Since $|E_D(V_0,V_4)|\leq |V_0|\cdot |W|$,  $|W|\geq \sqrt{\eta}|V_4|$. For every $v\in W$, $N^+_D(v)\cap V_3=\emptyset$ or there is a $C_4$. In addition, $E_D(W, V_4\cup V_1)=\emptyset$. Thus, by (\ref{eq2-case2}), 
 $$|E_D(W, V_0)|> |W|(1/5-\epsilon)m- 2\eta m^2>(1-\sqrt{\eta})|W|\cdot |V_0|,$$
 contradicting $|E_D(V_0, W)|=|E_D(V_0, V_4)|\geq \sqrt{\eta}|V_4|\cdot |V_0|\geq \sqrt{\eta}|W|\cdot |V_0|.$ 

 By (\ref{eq3-case2}) and (\ref{eq2-case2}),
 \begin{equation}
 |E_D(V_0, V_1)|\geq (1/5-\epsilon)m|V_0|- O(\sqrt{\eta}m^2)\geq (1-O(\sqrt{\eta}))|V_0|\cdot |V_1|,
 \end{equation}
 and by (\ref{eq1-case2.1}), $|E_D(V_1, V_2)|\geq (1-O(\eta))|V_1|\cdot |V_2|.$ Therefore, by Lemma \ref{non-ext-lem3}, $V_0, V_1, V_2, V_3, V_4$ contains a $(m/5, \mu)$-blow up of a directed cycle of length five for $\mu=O(\sqrt{\eta})$.
 \end{proof}
 \section{The Extremal Case}\label{ext-section}
Let $H$ be an edge-colored triangle-free graph on  $n$ that does not contain a monochromatic path of length three. Let $D_H$ be the associated digraph. Then $\delta^+(D_H)= \delta^c(H)>n/5$. Given $0<\eta \ll 1$, we choose $0<\epsilon\ll d \ll \eta$ and $t\in \mathbb{Z}^+$ and we apply the regularity lemma to $D_H$ to obtain the reduced digraph $\mathcal{R}_D:=\mathcal{R}_D(\epsilon, d, t)$ on $m\geq t$ vertices. By (\ref{deltsR}),  $\delta^+(\mathcal{R}_D)\geq (1/5-O(d))m$. Now, using the results from the previous section either $\mathcal{R}_D$ contains a $C_4$ or it contains an $(m/5, \mu)$-blow up of a directed cycle of length five for some $\mu \ll \eta$, and we have the following fact.  

 \begin{fact}\label{fact-about-C4}
 Let $\eta>0$. There exist $0<\epsilon\ll d\ll \eta$ and $l\in \mathbb{Z}^+$ such that either $\mathcal{R}_D:=\mathcal{R}_D(\epsilon, d, l)$ contains a $C_4$ or $D_H$ is $\eta$-extremal. 
 \end{fact}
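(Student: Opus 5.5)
The plan is to chain the regularity lemma and Section \ref{non-ext-section}, working at the level of the reduced digraph. Given $\eta$, we first obtain $\mu$ from Lemma \ref{blow-up-extremal-lem}. We then choose $\epsilon$ from Corollary \ref{cor-NE-part1} and the Part 2 argument small enough that the blow-up produced has parameter at most $\mu/2$. Finally we pick $0<\epsilon_{\rm reg}\ll d\ll \epsilon$ and $l$, and apply Lemma \ref{reg-lemma} to $D_H$ to obtain $\mathcal{R}_D=\mathcal{R}_D(\epsilon_{\rm reg},d,l)$ on $m\ge l$ vertices. By (\ref{deltsR}) and $\delta^+(D_H)=\delta^c(H)>n/5$, we get $\delta^+(\mathcal{R}_D)\ge (1/5-O(d))m$.

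Next we reduce to the hypotheses of Section \ref{non-ext-section}. We assume $\mathcal{R}_D$ has no $C_4$, since otherwise we are done. If $\mathcal{R}_D$ is not oriented, the remark before Lemma \ref{blow-up-extremal-lem} applies: we can either switch to that alternative or note that a 2-cycle $ij,ji$ already yields a directed $C_4$ $i\,j\,i\,j$ only in the non-simple sense. So we instead use the stated convention that we may assume $\mathcal{R}_D$ is oriented.

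Triangle-freeness of $\mathcal{R}_D$ is the delicate point. The underlying graph of $D_H$ is $H$, which is triangle-free. A triangle in $\mathcal{R}_D$, however, only gives three pairwise regular pairs of density at least $d$. By the counting lemma this forces $\Omega(n^3)$ triangles in the underlying graph of $D_H$, which is a contradiction. The one subtlety is that edges of $\mathcal{R}_D$ are oriented pairs while the counting lemma concerns the underlying pairs. Regularity and density of $(V_i,V_j)$ in $D_H$ imply the same for the underlying bipartite graph, up to a factor, so this step goes through. Hence $\mathcal{R}_D$ is an oriented, triangle-free, $C_4$-free digraph with $\delta^+\ge(1/5-\xi)m$ for $\xi=O(d)$.

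We then apply Corollary \ref{cor-NE-part1} to get $D\subseteq\mathcal{R}_D$ with almost regular in- and out-degrees, and run Part 2 with a fixed edge $u_0v_0$. If $|E_D(B_1,B_0)|\ge\eta' m^2$, Corollary \ref{non-ext-cor1} applies; otherwise Corollary \ref{non-ext-cor2} applies. Either way $D$ contains a $(|D|/5,O(\mu'))$-blow-up of $\vec C_5$. Since $|D|\ge(1-\epsilon)m$, this is also an $(m/5,\mu)$-blow-up in $\mathcal{R}_D$ after adjusting constants. Lemma \ref{blow-up-extremal-lem} then gives that $D_H$ is $\eta$-extremal.

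The main obstacle is bookkeeping the hierarchy $\epsilon_{\rm reg}\ll d\ll\xi\ll\epsilon\ll\eta'\ll\mu\ll\eta$ so that all of these lemmas are simultaneously applicable. A secondary point is justifying the triangle-free and oriented assumptions on $\mathcal{R}_D$ via the counting lemma; there I would first check that regular dense pairs in $D_H$ remain regular in the underlying graph.
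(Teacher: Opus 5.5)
Your proposal is correct and follows the paper's own outline: you take $\mu$ from Lemma~\ref{blow-up-extremal-lem}, apply regularity, pass through Corollary~\ref{cor-NE-part1} and then Corollary~\ref{non-ext-cor1} or~\ref{non-ext-cor2} to find an $(m/5,\mu)$-blow-up in $\mathcal{R}_D$, and conclude that $D_H$ is $\eta$-extremal; you also make explicit that $\mathcal{R}_D$ is triangle-free (via the counting lemma), which Section~\ref{non-ext-section} needs but the paper's sketch does not state. One small correction to that step: a regular directed pair does not make the undirected union $E_D(V_i,V_j)\cup E_D(V_j,V_i)$ regular, but you do not need this, since you can apply the triangle counting lemma directly to the three arc sets $E_D(\cdot,\cdot)$ realizing the reduced edges, which are dense regular bipartite subgraphs of $H$.
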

 \begin{proof}
     We will only outline the proof. Given $\eta>0$ let $\mu$ be such that the conclusion of Lemma \ref{blow-up-extremal-lem} holds. Further set $\epsilon \ll d\ll \mu$ and suppose the reduced digraph of $D_H$, $\mathcal{R}_D$, has no $C_4$. Let $m=|\mathcal{R}_D|$. Then, in view of Corollary \ref{non-ext-cor1} and Corollary \ref{non-ext-cor2}, $\mathcal{R}_D$ contains an $(m/5,\mu)$-blow-up of a directed cycle of length five. Consequently, by Lemma \ref{blow-up-extremal-lem}, $D_H$ is $\eta$-extremal. 
 \end{proof}
 In this section, we will address the case when $D_H$ is $\eta$-extremal for a sufficiently small $\eta>0$. We always assume that the order of $D_H$ is sufficiently large and we will use two auxiliary constants, $\gamma>0$ and $\beta>0$ that satisfy
 \begin{equation}\label{ext-eta-beta-gamma}
\eta \ll \beta \ll\gamma  \ll 1.
\end{equation}
The rest of this section is organized as follows. In Section \ref{ext-subsec-1}, we establish a few useful properties of an extremal digraph. In Section \ref{ext-subsec-2}, we prove main lemmas for the extremal case and prove the Theorem \ref{rainbowgeneral-thm}. Finally, in Section \ref{ext-subsec-3}, we analyze length four and prove Theorem  \ref{rainbowC4-thm}.
\subsection{Initial Observations}\label{ext-subsec-1}
Suppose that $D_H$ is $\eta$-extremal, let $V_0,\dots, V_4$ be the sets from Definition \ref{ext-def}, and let $c:E(H)\rightarrow \mathcal{C}$ denote the edge coloring of $H$.
\begin{defn}
For $i=0,\dots, 4$, let $V_i'$ be the set of vertices $v\in V_i$ such that for some color $c_v$  all but at most $\beta |V_{i-1}|$ vertices $w\in N^-_{D_H}(v)\cap V_{i-1}$ satisfy $c(wv)=c_v$.
\end{defn}
We first show that given $k\in \mathbb{Z}^+$ and assuming $k\ll |D_H|$, either there is a rainbow cycle of length $4k$ or for every $i$ almost all vertices from $V_i$ are in $V_i'$. 

\begin{lemma} \label{lem-V'}  If $|V_i'| < (1-\beta)|V_i|$, there is a rainbow cycle  of length $4k$ in $H$.
\end{lemma}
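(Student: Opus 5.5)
We will prove the contrapositive: if $|V_i'|<(1-\beta)|V_i|$, we build a rainbow $C_{4k}$ greedily inside the dense bipartite pair $(V_{i-1},V_i)$. Set $B:=V_i\setminus V_i'$, so $|B|>\beta|V_i|$. The basic tool is the observation from Section \ref{prelim-section} that $H$, as an edge-minimal counter-example, has no monochromatic path of length three. Since $H$ is triangle-free, every color class of $H$ is then a vertex-disjoint union of stars.

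\textbf{Step 1: two large color-disjoint in-neighborhoods at a bad vertex.} Fix $u\in B$ and put $S_u:=N^-_{D_H}(u)\cap V_{i-1}$. By $\eta$-extremality, $|S_u|\geq(1-\eta)|V_{i-1}|$. Since $u\notin V_i'$, no single color covers all but $\beta|V_{i-1}|$ of the edges from $S_u$ to $u$. Group the vertices of $S_u$ by the color of their edge to $u$ and merge the groups greedily. This splits $S_u$ into two sets $S_u^1,S_u^2$ such that
\begin{itemize}
\item no color appears on edges to $u$ from both $S_u^1$ and $S_u^2$, and
\item $|S_u^1|,|S_u^2|\geq \beta|V_{i-1}|/3$.
\end{itemize}
Because $\eta\ll\beta$, both sets still make up a positive fraction of $V_{i-1}$.

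\textbf{Step 2: the cycle shape and which colors can collide.} We look for a cycle $x_1u_1x_2u_2\cdots x_{2k}u_{2k}x_1$ with all $u_j\in B$ and all $x_j\in V_{i-1}$. For each $j$ we require:
\begin{itemize}
\item $x_j\in S_{u_j}^1$ and $x_{j+1}\in S_{u_j}^2$, indices taken mod $2k$;
\item both $x_ju_j$ and $x_{j+1}u_j$ are arcs of $D_H$. Every $x\in V_{i-1}$ has out-degree at least $(1-\eta)|V_i|$ into $V_i$, so this holds for all but an $O(\eta)$ fraction of choices.
\end{itemize}
Now consider when two edges of the cycle can share a color.
\begin{itemize}
\item The two edges at $u_j$ differ in color by the choice of $S_{u_j}^1,S_{u_j}^2$.
\item The two edges at $x_j$ are both out-arcs of $x_j$ in $D_H$. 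The colors on out-arcs of a vertex are pairwise distinct by the definition of $N^c$, so these also differ.
\item Two non-adjacent edges of the same color would have to lie in one star. That star has at most one edge at each leaf, so its center must be a vertex of the cycle carrying both edges. This is impossible for non-adjacent edges.
\item Two adjacent edges of the cycle have just been handled.
\end{itemize}
So any cycle of this shape is automatically rainbow, and the remaining work is only to find one.

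\textbf{Step 3: finding the cycle, and the main obstacle.} We choose $u_1,x_2,u_2,\dots$ greedily, keeping all vertices distinct. At each step the candidate set is a positive fraction of $V_{i-1}$ or of $B$, minus $O(k)$ used vertices and an $O(\eta)$ fraction of non-arcs, hence nonempty for $n$ large.

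The main obstacle is closing the cycle. We need $x_1$ to lie in $S_{u_1}^1\cap S_{u_{2k}}^2$ with both arcs present. Greedy choice does not guarantee this intersection is large, because the partitions at different bad vertices are unrelated.

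To handle closure, I would use counting rather than pure greed:
\begin{itemize}
\item Fix the pair $(u_1,u_{2k})$ first by pigeonhole. Among pairs of bad vertices, either some pair has $|S_{u}^1\cap S_{u'}^2|\geq \beta^2|V_{i-1}|/20$, or we may swap the labels $1,2$ at $u'$. Since the $S^1,S^2$ sets each have size about a $\beta$ fraction of $V_{i-1}$, for one of the two labelings the intersection has size $\Omega(\beta^2)|V_{i-1}|$.
\item Take $x_1$ in this intersection.
\item Fill in the middle path $x_1u_1x_2\cdots u_{2k}$ greedily as above, since the endpoints are already fixed.
\end{itemize}
If the swap-of-labels argument turns out too weak, the fallback is to count the homomorphic copies of this constrained $C_{4k}$ via Cauchy--Schwarz over $V_{i-1}$. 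This gives $\Omega(n^{4k})$ copies, of which only $O(n^{4k-1})$ are degenerate, so a genuine cycle exists.
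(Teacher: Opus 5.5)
\textbf{Gap: the cycle is not automatically rainbow.} The proposal breaks down in Step~2, at the claim that every cycle of your shape is rainbow. Triangle-freeness and the absence of monochromatic paths of length three do give that each color class is a vertex-disjoint union of stars. But nothing forces a color class to be a \emph{single} star. Two non-adjacent cycle edges, say $x_1u_1$ and $x_3u_3$, can lie in two different star components of the same color; in Example~\ref{ex-C4} the color $\alpha$ is an entire perfect matching. So your constraints only separate the colors of \emph{adjacent} edges, and Step~3 only keeps the vertices distinct.

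The greedy also cannot simply be told to avoid used colors. There, $x_{j+1}$ is chosen after $u_j$ is fixed, and $x_{j+1}u_j$ is an in-arc at $u_j$. A vertex of $V_i\setminus V_i'$ may receive only two colors on its in-arcs from $V_{i-1}$, one on each side of its split. If the color on the side opposite to $x_j$ already appears on the cycle, no admissible $x_{j+1}$ exists.

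\textbf{What is missing.} Every edge of your cycle is an out-arc of its endpoint in $V_{i-1}$. So colors can be controlled only if each bad vertex is chosen \emph{after} both of its cycle-neighbors; then each forbidden color rules out at most one candidate at each neighbor. This needs pairs in $V_{i-1}$ that lie on opposite sides of \emph{many common} bad vertices. The paper gets them by a K\H{o}v\'ari--S\'os--Tur\'an-type double count: sets $S\subseteq V_i\setminus V_i'$ with $|S|=10k$ and $T_1,T_2\subseteq V_{i-1}$ with $|T_1|=|T_2|=k$, such that $T_j\subseteq Z_j(v)$ (your $S^j_v$) for all $v\in S$. The cycle then alternates between $T_1\cup T_2$ and $S$. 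Each $S$-vertex is picked greedily between a vertex of $T_1$ and one of $T_2$: out of $10k$ candidates, at most $8k-4$ are excluded by the at most $4k-2$ used colors and $2k-1$ by used vertices. Closing the cycle is free, because $S$ is complete to $T_1\cup T_2$ in the auxiliary coloring.

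\textbf{Closing step.} Your closing step also fails as proposed. Fixing $u_1,u_{2k},x_1$ only moves the obstruction to the last greedy step, which needs $x_{2k}\in S^2_{u_{2k-1}}\cap S^1_{u_{2k}}$.

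\textbf{Counting fallback.} The fallback could be completed, but only with two changes. First, use the symmetric constraint that $x_j,x_{j+1}$ lie on opposite sides of $u_j$. With labels $1,2$ fixed in advance the count can be zero, e.g.\ if every split is a threshold cut of one ordering of $V_{i-1}$. Second, add an $O(n^{4k-1})$ bound on tuples with a repeated color. This follows from the same out-arc observation: if all other vertices are fixed, a prescribed color coincidence determines the bad endpoint of one of the two edges.
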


\begin{proof}Recall that by the definition of $D_H$ for every $v$ all edges in $E(D_H)$ from $v$ to the out-neighbors of $v$  have different colors and that $\eta \ll \beta \ll 1$. For every $v\in V_i\setminus V_i'$ there are two subsets $Z_1(v), Z_2(v)$ of $N_{D_H}^-(v)\cap V_{i-1}$ such that $Z_1(v)\cap Z_2(v)=\emptyset$, $|Z_1(v)|\geq \beta |V_{i-1}|$, $|Z_2(v)|\geq \beta |V_{i-1}|$, and the colors on the edges between $Z_1(v)$ and $v$ are different than the colors on the edges between $Z_2(v)$ and $v$. This gives an auxiliary 2-coloring of a subgraph of $H[V_{i-1}, V_i\setminus V_i']$ with $wv$ colored $j\in \{1,2\}$ if $w\in Z_j(v).$
Suppose $|V_i\setminus V_i'|\geq \beta |V_i|.$ Then  there are sets $S\subseteq V_i\setminus V_i'$ and $T_1, T_2\subseteq V_{i-1}$ such that $|S|=10k$, $|T_1|=|T_2|=k$ and in the auxiliary 2-coloring, $T_j\cup S$ induces the complete bipartite graph of color $j\in \{1,2\}$.
We will now construct a rainbow $C_{4k}$ connecting $k$ vertices from each of the $T_1, T_2$ one by one  using the total of $2k$ additional vertices from $S$. Recall that for every $w\in T_j$ all edges between $w$ and $S$ have distinct colors in $H$. 
Let $w_1\in T_1$ and $w_2\in T_2$. It is enough to argue that there is a rainbow path $w_1vw_2$  for some $v\in S$ that avoids any $4k-2$ colors and any $2k-1$ vertices $u\in S$. There can be at most $8k-4$ vertices $u$ in $S$ such that $c(w_1u)$ or $c(w_2u)$ is among $4k-2$ forbidden colors. Consequently, there is $v\in S$ that is not among $2k-1$ forbidden vertices, and such that $c(w_1v), c(w_2v)$ are not forbidden. Since $w_1\in T_1$ and $w_2\in T_2$, $c(w_1v)\neq c(w_2v)$.
\end{proof} 
\begin{defn}
     For $i=0, \dots, 4$ and $v\in V_i'$, let $N^m(v)$ be the set of vertices $x\in N^-_{D_H}(v)\cap V_{i-1}'$ such that $c(xv)=c_v$.
 \end{defn}
 For $v\in V_i'$,
 \begin{equation}\label{ext-Nm}
 |N^m(v)|\geq |V_{i-1}'|- \eta |V_{i-1}|-\beta|V_{i-1}|\geq (1-3\beta/2)|V_{i-1}'|.
 \end{equation}
The following simple observation will be used repeatedly when dealing with vertices in $V_i'$.
 \begin{fact}\label{ext-fact3}
 For $i=0, \dots, 4$, if $u, v \in V_i'$ are such that $u\neq v$, then $c_u\neq c_v$. 
 \end{fact}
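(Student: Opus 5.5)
The plan is to argue by contradiction, using the two facts recorded in Section \ref{prelim-section}: $H$ has no monochromatic path of length three (edge-minimality), and $H$ is triangle-free. Suppose $u,v\in V_i'$ are distinct and $c_u=c_v=:c$. By (\ref{ext-Nm}), $|N^m(u)|,|N^m(v)|\geq (1-3\beta/2)|V_{i-1}'|$. Lemma \ref{lem-V'} lets us assume $|V_{i-1}'|\geq (1-\beta)|V_{i-1}|$, and $|V_{i-1}|\geq (1-\eta)n/5$, so $V_{i-1}'$ is large. Since $\beta\ll 1$, the two sets $N^m(u)$ and $N^m(v)$ each cover almost all of $V_{i-1}'$. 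Hence they share many common vertices, and each contains plenty of vertices outside the other.

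The key step is to build a monochromatic path of length three. Pick $x\in N^m(u)$ and $y\in N^m(v)$ with $x\neq y$ and $x\notin\{u,v\}$, $y\notin\{u,v\}$. This is possible because $x,y\in V_{i-1}$ while $u,v\in V_i$, and the $V_j$ are pairwise disjoint. Then $c(xu)=c$ and $c(yv)=c$. We would like an edge joining these two edges. I would choose $w\in N^m(u)\cap N^m(v)$, which exists since both sets have size at least $(1-3\beta/2)|V_{i-1}'|$. Then $c(wu)=c(wv)=c$. Now take any $x\in N^m(u)\setminus\{w\}$. The path $x\,u\,w\,v$ is a path on four distinct vertices, because $u\neq v$, $x\neq w$, $x,w\in V_{i-1}$ and $u,v\in V_i$. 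All three of its edges $xu$, $uw$, $wv$ have color $c$. This gives a monochromatic path of length three, contradicting the minimality assumption. We need $|N^m(u)|\geq 2$, which is immediate since the set is linear in $n$.

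One point to check is that the minimality consequence is really stated as ``no monochromatic path of length three'' in the usual sense: three edges on four distinct vertices. Our path has four distinct vertices, so this is fine. The main obstacle, if there is one, is simply to ensure the intersection $N^m(u)\cap N^m(v)$ is nonempty. This follows from the size bound (\ref{ext-Nm}) together with $\beta<1/3$, since then $2(1-3\beta/2)|V_{i-1}'|>|V_{i-1}'|$. Triangle-freeness is not even needed here, because $u$ and $v$ lie in the same class and we never use an edge between them.
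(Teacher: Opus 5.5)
Your argument is correct, but it reaches the contradiction by a different route from the paper. Both you and the paper use (\ref{ext-Nm}) to find a common vertex $w\in N^m(u)\cap N^m(v)$.

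The paper stops at that point. By definition $N^m(u)\subseteq N^-_{D_H}(u)$ and $N^m(v)\subseteq N^-_{D_H}(v)$, so $u,v\in N^+_{D_H}(w)=N^c(w)$. The edges from $w$ to its out-neighbors carry distinct colors by the choice of $N^c(w)$, hence $c_u=c(wu)\neq c(wv)=c_v$.

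You instead take the equality $c(wu)=c(wv)=c$, which already contradicts the choice of $N^c(w)$. You then extend it with a second vertex $x\in N^m(u)\setminus\{w\}$ to the monochromatic path $xuwv$. Finally you appeal to the edge-minimality consequence that $H$ has no monochromatic path of length three. That hypothesis is a standing assumption in Section \ref{ext-section}, and your check that the four vertices are distinct is right, so the proof is valid in context.

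The paper's version is shorter and more self-contained. It uses only the construction of $D_H$, so it holds for any associated digraph whether or not $H$ is edge-minimal. Your auxiliary vertex $x$ and the appeal to minimality are not needed.
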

 \begin{proof}
 For $x\in \{u,v\}$, from (\ref{ext-Nm}),
 $|N^m(x)|\geq (1-3\beta/2)|V_{i-1}'| >|V_{i-1}'|/2.$ Consequently $N^m(u)\cap N^m(v)\neq \emptyset$. However, if $z\in N^m(u)\cap N^m(v)$, then by the definition of $D_H$, $c(zu)\neq c(zv)$.
 \end{proof}
{Before proceeding, we slightly modify  the digraph $D_H$. Note that for every $v\in \bigcup_{i=0}^4 V_i'$ there is exactly one vertex $w_v\in V(H)$ such that $vw_v\in E(D_H)$ and   $c(vw_v)=c_v$.

\begin{defn}\label{def-ext-D}Let $D$ be the digraph obtained from $D_H$ as follows.  If $w_v\notin N^m(v)$ then choose arbitrarily one vertex $u_v\in N^m(v)$, add $vu_v$ to $D$, and delete the edge  from $v$ to $w_v$.\end{defn}
Since the out-degree of any vertex is unchanged, for every $v\in V(H)$, $d_D^+(v)= d_{D_H}^+(v)= d^c(v)$. Also, from Definition \ref{ext-def}, for $v\in  V_i'$, 
\begin{equation}\label{eq-new}|N^+_D(v)\cap V_{i+1}|\geq (1-\eta)|V_{i+1}|-1\end{equation}
and trivially
$$|N^-_D(v)\cap V_{i-1}|\geq |N^m(v)|.$$
In addition, for $v\in \bigcup_{i=0}^4 V_i'$, if $vw\in E(D)$ and $w\notin N^m(v)$, then $c(vw)\neq c_v$.}

 \begin{defn}\label{ext-def-V''}
 Let $V_0'', \dots, V_4''$ be obtained as follows. For every $v\in V(H)\setminus \bigcup_{i=0}^4 V_i'$, add $v$ to $V_i''$ if $|N_D^+(v)\cap V_{i+1}'|\geq \gamma|V_{i+1}'|$  and if possible $|N_D^+(v)\cap V_{i-1}'|\geq \gamma |V_{i-1}'|$. 
 \end{defn}
Note that we do not necessarily have that $V_i'' \subseteq V_i$.  Since $\delta^+(D)\geq n/5$ and $\gamma \ll 1$, $\{V_0'\cup V_0'', \dots, V_4'\cup V_4''\}$ is a partition of $V(H).$ In addition, from Lemma \ref{lem-V'},
 \begin{equation}\label{eq-V_i''}
     |V_i''|\leq (\beta+\eta) n\leq 6\beta |V_i'|.
 \end{equation}

 We have the following observation. In this fact and the rest of the section we always assume that the addition is modulo five.
 \begin{fact}\label{ext-fact2}
 \begin{itemize}
\item[(a)]     For every $v\in V_i'\cup V_{i}''$, $N_H(v)\cap( V_{i}'\cup V_{i+2}')=\emptyset$. 
\item[(b)] For every $i$ and every $v\in V_i'$, $N_H(v)\subseteq (V_{i-1}'\cup V_{i-1}'') \cup (V_{i+1}'\cup V_{i+1}'')\cup V_{i+2}''.$
\end{itemize}
 \end{fact}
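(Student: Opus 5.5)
Both parts rest on the triangle-freeness of $H$ and on the fact that every vertex of $V_j'$ has almost all of $V_{j-1}'$ as $D$-in-neighbours and almost all of $V_{j+1}$ as $D$-out-neighbours. The strategy is to derive (a) by exhibiting triangles, and then (b) by combining (a) with counting against the definition of $V_j''$.

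For (a), fix $v\in V_i'\cup V_i''$ and first let $w\in V_i'$ be a neighbour of $v$. By (\ref{ext-Nm}), $|N^m(w)|\geq (1-3\beta/2)|V_{i-1}'|$, and all of these vertices are $H$-neighbours of $w$. We then produce many $H$-neighbours of $v$ in $V_{i-1}'$.
\begin{itemize}
\item If $v\in V_i'$, this follows from (\ref{ext-Nm}) applied to $v$.
\item If $v\in V_i''$, Definition \ref{ext-def-V''} gives $|N^+_D(v)\cap V_{i+1}'|\geq\gamma|V_{i+1}'|$, but that is the wrong side. It is better to use the in-neighbourhood of $w$ differently: take a common neighbour through $V_{i+1}'$.
\end{itemize}
Since $\gamma\gg\beta$, for $v\in V_i''$ the sets $N_H(v)\cap V_{i+1}'$ and $N_H(w)\cap V_{i+1}'$ intersect. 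The latter has size at least $(1-\eta)|V_{i+1}|-1$ by (\ref{eq-new}), and $V_{i+1}'$ is almost all of $V_{i+1}$ by Lemma \ref{lem-V'}. A common vertex closes a triangle $vwz$, which is a contradiction. For $v\in V_i'$ the same works directly via $V_{i+1}$.

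Now let $w\in V_{i+2}'$ be a neighbour of $v$. Here $w$ has at least $(1-3\beta/2)|V_{i+1}'|$ neighbours in $V_{i+1}'$ by (\ref{ext-Nm}). Meanwhile $v$ has at least $\gamma|V_{i+1}'|$ neighbours in $V_{i+1}'$ if $v\in V_i''$, or almost all of $V_{i+1}$ if $v\in V_i'$. These sets intersect, giving a triangle. This proves (a).

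For (b), take $v\in V_i'$. By the partition $\{V_j'\cup V_j''\}$, every neighbour of $v$ lies in some $V_j'$ or $V_j''$. Part (a) applied to $v$ excludes $V_i'$ and $V_{i+2}'$. Applying (a) to a vertex $u$ of $V_{i-2}'\cup V_{i-2}''$ with $j=i-2$ excludes $u$ having a neighbour in $V_{(i-2)+2}'=V_i'$, so $V_{i-2}'$ and $V_{i-2}''$ are excluded. Similarly, for $u\in V_i''$, part (a) says $u$ has no neighbour in $V_i'$, which excludes $V_i''$.

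What remains is $V_{i-1}'\cup V_{i-1}''\cup V_{i+1}'\cup V_{i+1}''\cup V_{i+2}''$, together with the sets $V_{i+2}'$ and $V_{i-2}'$, which are already handled. Note that $V_{i+3}=V_{i-2}$. This is exactly the list in (b).

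The main obstacle is the $V_i''$ cases of (a). There we only have the $\gamma$-density from Definition \ref{ext-def-V''}, so we must check that $\gamma|V_{i+1}'|$ together with $(1-O(\beta))|V_{i+1}'|$ exceeds $|V_{i+1}'|$. This holds because $\beta\ll\gamma$.
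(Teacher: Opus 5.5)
Your proposal is correct and follows the paper's argument. Part (a) comes from a common neighbour in $V_{i+1}'$; for $w\in V_i'$ you justify this via (\ref{eq-new}) and Lemma \ref{lem-V'}, which is slightly more careful than the paper's appeal to (\ref{ext-Nm}) there. Your derivation of (b), applying (a) to the other endpoint $u\in V_i''$ or $u\in V_{i-2}'\cup V_{i-2}''$, is exactly the paper's common-neighbour argument in $V_{i+1}'$ and $V_{i-1}'$, respectively.
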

 \begin{proof}
Let $v\in V_i'\cup V_i''$. Then $|N_D^+(v)\cap V_{i+1}'|\geq \gamma |V_{i+1}'|$. If $w\in V_i'\cup V_{i+2}'$ then by (\ref{ext-Nm}), $w$ has at least $(1-2\beta)|V_{i+1}'|$ neighbors in $V_{i+1}'$. Since $\beta\ll \gamma $, $w$ and $v$ have a common neighbor in $V_{i+1}'$, which proves (a). If $v\in V_i'$ and $w\in V_i''$, then $v$ and $w$ share a neighbor in $V_{i+1}'$, and similarly if $w\in V_{i+3}'\cup V_{i+3}''$, then $v$ and $w$ share a neighbor in $V_{i-1}'$. 
 \end{proof}
 In particular, from Fact \ref{ext-fact2}(a), a vertex can be added to sets $V_i''$ only for non-consecutive indices $i$. We continue with a lemma which shows that in many cases we can obtain a rainbow even cycle of a fixed length.
 
 \begin{lemma}\label{ext-lem2}
    If there exists $i$ such that for some $u\in V_{i-1}'$ and $v\in V_i'$, $uv\in E(H)$ and $c_u, c_v, c(uv)$ are three distinct colors, then $H$ contains a rainbow  $C_{2k}$.
 \end{lemma}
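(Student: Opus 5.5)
The plan is to close the edge $uv$ into a cycle with a walk from $v$ back to $u$ that first steps ``backwards'' through the cyclic structure $V_0',\dots,V_4'$ and then ``forwards''. The vertices are chosen greedily, and the colours are controlled by assigning each edge of the cycle an owner whose colour $c_w$ it carries. I assume $k\ge 2$ and that $n$ is large compared with $k$.

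\emph{Colour rule.} Let $x\in V_{j-1}'$ and $y\in V_j'$ with $x\in N^m(y)$. Then $c(xy)=c_y$ by the definition of $N^m(y)$. Call a traversal of such an edge from $x$ to $y$ a \emph{forward step}, and a traversal from $y$ to $x$ a \emph{backward step}. In both cases the edge has colour $c_y$, and we say it is \emph{owned} by $y$. So a forward step is owned by the vertex it enters, and a backward step by the vertex it leaves. Along a walk, one vertex owns two edges only if it is entered by a forward step and left by a backward step. A walk of the form (backward steps)$^{a}$ followed by (forward steps)$^{b}$ never does this; at the turning vertex both incident edges are owned by its two neighbours.

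\emph{Choice of $a,b$ and the cycle.} We need a walk from $v\in V_i'$ to $u\in V_{i-1}'$ of length $2k-1$ with displacement $b-a\equiv -1 \pmod 5$. Take $a=k$ and $b=k-1$. The cycle is
\[ u,\ v=w_0,\ w_1,\dots,w_k,\ w_{k+1},\dots,w_{2k-2},\ u, \]
where $w_j\in V'_{i-j}$ for $j\le k$, and the last $k-1$ steps move forward, ending at $u\in V'_{i-1}$. Its length is $1+(2k-1)=2k$. The owners are $v,w_1,\dots,w_{k-1}$ for the backward edges, $w_{k+1},\dots,w_{2k-2},u$ for the forward edges, and the edge $uv$ keeps its colour $c(uv)$. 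The owners are $2k-1$ distinct vertices, and by hypothesis $c(uv)\notin\{c_u,c_v\}$. By Fact~\ref{ext-fact3}, owners inside the same $V_j'$ have distinct colours. To handle owners in different parts, note that each colour equals $c_y$ for at most one $y$ in each $V_j'$, hence for at most $5$ vertices overall. So at every step of the construction at most $O(k)$ vertices are forbidden, either because they were already used or because they would repeat a colour, including $c_u,c_v,c(uv)$.

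\emph{Greedy embedding.} For a backward step from $w_j$, choose $w_{j+1}\in N^m(w_j)$, which by (\ref{ext-Nm}) contains all but $\tfrac32\beta|V_{j-1}'|$ vertices. For a forward step from $w_j\in V_l'$, choose $w_{j+1}\in V_{l+1}'$ with $w_j\in N^m(w_{j+1})$. Double counting with (\ref{ext-Nm}) shows that all but $O(\sqrt\beta)$ of the vertices of $V_l'$ lie in $N^m(y)$ for at least $(1-\sqrt\beta)|V_{l+1}'|$ choices of $y$. So we additionally keep every chosen vertex \emph{typical} in this sense, which excludes only $O(\sqrt\beta n)$ vertices.

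\emph{Main obstacle: closing the cycle.} The last step requires $w_{2k-2}\in N^m(u)$, and $w_{2k-2}$ must also be a valid forward successor of $w_{2k-3}$. I would pick $w_{2k-2}$ from $N^m(u)\cap\{y\in V'_{i-2}: w_{2k-3}\in N^m(y)\}$. Both sets cover a $(1-O(\sqrt\beta))$-fraction of $V'_{i-2}$, so after removing the $O(k)$ forbidden vertices the intersection is still nonempty. When $k=2$ there is no intermediate forward vertex, and the same intersection argument is applied directly at $w_k$. The remaining thing to double-check is the turning vertex $w_k$. It is entered by a backward step owned by $w_{k-1}$ and left by a forward step owned by $w_{k+1}$, so no colour conflict arises there. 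The resulting cycle is a rainbow $C_{2k}$ in $H$.
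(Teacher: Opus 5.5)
Your proof is correct and follows the same scheme as the paper. Both greedily walk backward along $N^m$-edges, then forward, and close the cycle through an $N^m$-neighbourhood of an endpoint of $uv$; the hypothesis $c(uv)\notin\{c_u,c_v\}$, $c_u\neq c_v$ supplies the needed distinctness. The paper runs the construction mirrored: it goes backward from $u$ and closes into $N^m(v)$. It also builds the forward half from out-edges of $D$, whose colours at each vertex are automatically distinct, so it needs no typicality double-counting. Your forward edges carry the colour $c_y$ of the vertex entered, which also works but costs that extra averaging step.
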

 \begin{proof}
 Let $U:= \{c_u, c_v, c(uv)\}$. Let $z_1\in N^m(u)$ be such that $c_{z_1}\notin U$, and set $U:= U \cup \{c_{z_1}\}$. Select distinct vertices $z_2, \dots, z_{k-1}$ one by one from $V(H)\setminus \{u, v, z_1\}$ and modify $U$ as follows. For every $j$, select $z_{j+1}\in N^m(z_j)$ so that $c_{z_{j+1}}\notin U$ and add $c_{z_{j+1}}$ to $U$. Now select $y_{k-2}, \dots, y_1$ in a similar way as follows. Supposing  $y_{k-1}:=z_{k-1}$, for every $j=1, \dots, k-2$ if $y_{j+1}\in V_{l}'$ then select
a vertex $y_{j}\in N^+_D(y_{j+1})\cap V_{l+1}'$ that has not been selected so far, so that $c(y_{j+1}y_j)\notin U$ and set $U:=U\cup \{c(y_{j+1}y_j)\}$. Finally, select $w\in V_{i-1}'$ that has not been selected so far so that $w\in N^+_D(y_{1})\cap N^m(v)$ and $c(y_1w)\notin U.$
 \end{proof}
 In addition to  Lemma \ref{ext-lem2} it is not possible to have $c_u=c_v=c(uv)$ because this leads to a monochromatic path of length three. However, the option of two of these colors being equal cannot be easily excluded. 

 \begin{defn}
 An edge $vw$ in $D$ is called special if for some $i$, $v\in V_i'$ and $w\in V_{i-1}'\cup V_{i-1}'',$ and $c(vw) \neq c_v$. A vertex $w$ in $V_{i-1}'\cup V_{i-1}''$ is called special if it is incident to at least one special edge $vw\in E(D)$. 
 \end{defn}
 Note that if $vw_1$ and $vw_2$ are two special edges with $v\in V_i'$ and $w_1, w_2\in V_{i-1}'\cup V_{i-1}''$, then $c(vw_1)\neq c(vw_2)$ because both $w_1$ and $w_2$ are in the out-neighborhood of $v$. 

 \begin{defn}
For $x\in V_i'$ and $y\in V_j'$, a rainbow $x, y$-connector is a path $xzy$ in $H$ such that $c_x, c_y, c(xz), c(zy)$ are four different colors.
\end{defn}
Rainbow connectors can be used to get rainbow cycles of an even length.
 \begin{lemma}\label{ext-lem-path}
Let $k\geq 2$. Suppose for some $i$ and $j\in \{i, i-2\}$ there is a rainbow $x,y$-connector for some $x\in V_{i}'$ and $y\in V_j'$.  Then $H$ contains a rainbow $C_{2k}.$ 
\end{lemma}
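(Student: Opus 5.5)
The plan is to close the connector $xzy$ into a cycle of length $2k$ by a path of length $2k-2$ from $y$ back to $x$ that runs through the sets $V_l'$ and uses only ``fresh'' colors, exactly as in the proof of Lemma \ref{ext-lem2}. The two tools are the following.
\begin{itemize}
\item \emph{Backward steps.} From any $w\in V_l'$ we can move to some $w'\in N^m(w)\subseteq V_{l-1}'$. Such an edge has color $c_w$, and by Fact \ref{ext-fact3} we can choose $w'$ so that $c_{w'}$ avoids any bounded set of colors, since $|N^m(w)|$ is linear and the colors $c_{w'}$ are pairwise distinct.
\item \emph{Forward steps.} From $w\in V_l'$ we can move to some $w'\in N^+_D(w)\cap V_{l+1}'$. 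By (\ref{eq-new}) and the definition of $D$, these edges all have distinct colors, so bounded sets of colors and vertices are easy to avoid.
\end{itemize}
Throughout we keep a set $U$ of used colors, initialised as $U=\{c(xz),c(zy)\}$. We reserve $c_x$ for the last edge entering $x$ and $c_y$ for the first edge leaving $y$; this is why the connector requires $c_x,c_y,c(xz),c(zy)$ to be distinct.

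Suppose first $j=i$, so $x,y\in V_i'$. Go backward from $y$: pick $y_1\in N^m(y)$, so $c(yy_1)=c_y$, choosing it with $c_{y_1}\notin U\cup\{c_x\}$, and add $c_y$ to $U$. Continue backward along $y_1,y_2,\dots$ for $k-2$ steps in total. Each new vertex $y_{s+1}\in N^m(y_s)$ is chosen distinct from earlier vertices and with $c_{y_{s+1}}$ fresh; the edge $y_sy_{s+1}$ has color $c_{y_s}$, which was already ensured fresh. We end at $y_{k-2}\in V_{i-k+2}'$.

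Now go forward from $y_{k-2}$, choosing each next vertex in $N^+_D(\cdot)\cap V'_{\text{next}}$ with new color and new vertex. After $k-3$ forward steps we reach a vertex $w$ in $V_{i-2}'$. The final step picks $q\in N^+_D(w)\cap N^m(x)\cap V_{i-1}'$ with $c(wq)$ fresh, and closes with the edge $qx$, which has color $c_x$. Such a $q$ exists because $|N^m(x)|\ge(1-3\beta/2)|V_{i-1}'|$ and $|N^+_D(w)\cap V_{i-1}'|\ge (1-2\eta)|V_{i-1}'|$, while only $O(k)$ vertices and colors are forbidden.

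Counting edges: $2$ (connector) $+(k-2)$ backward $+(k-3)$ forward $+2$ closing edges $=2k-1$. That is one too few, so the parity and index bookkeeping must be adjusted. Because the backward and forward segments traverse the 5-cycle of classes in opposite directions, the index drift is what matters. A closed walk in the class-cycle with $a$ forward and $b$ backward steps must satisfy $a-b\equiv$ (required shift) mod 5, with $a+b=2k-2$. The connector contributes a shift of $0$ when $j=i$ and of $-2$ when $j=i-2$. So in the writeup I will choose $a,b\ge 1$ with $a+b=2k-2$ and $a-b\equiv 0$ (respectively $\equiv 2$) mod 5, ordering them as backward steps first and then forward steps. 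The final step must enter $x$ via $N^m(x)$, and the first step out of $y$ uses $N^m(y)$, which is backward.

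The main obstacle is this arithmetic: solutions with $a,b\ge 1$ need not exist for small $k$ (e.g. $k=2$), and the first and last steps are forced to be backward edges. For those small cases I would instead allow the path to oscillate. A backward step followed by a forward step returns to the same class, which adds $2$ to the length with zero index shift. Using such oscillations we can realize any $a+b=2k-2\ge 2$ with the needed residue: take $b=a$ when $j=i$, and for $j=i-2$ use one extra net backward shift pair arranged as backward-backward. For $k=2$ the path $y\to y_1\to x$ needs $y_1\in N^m(y)\cap N^m(x)$ with $c_x\ne c_y$. This works when $j=i$, since both sets are large in $V_{i-1}'$. When $j=i-2$ it fails, so there I would route forward first: $y\to y_1\in N^+_D(y)\cap V_{i-1}'$ with $y_1\in N^m(x)$, a fresh color on $yy_1$, and then the edge $y_1x$ of color $c_x$. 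This needs only that $c_y$ is unused, which the connector guarantees.

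The general rule is therefore: the path from $y$ to $x$ must end with a backward $N^m(x)$ edge, and it starts either with a backward $N^m(y)$ edge (color $c_y$) or with a forward fresh-colored edge. With this flexibility the residue condition is always satisfiable, and the greedy choices succeed because only $O(k)$ constraints ever arise against linear-size candidate sets.
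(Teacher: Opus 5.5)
Your core construction is right and matches the paper's. Close $xzy$ by a path from $y$ to $x$ that first takes $b$ backward (down) steps along an $N^m$-chain, then $a$ forward (up) steps via $N^+_D$ with fresh colours, entering $x$ last from a vertex of $N^m(x)$. Here $a+b=2k-2$ and $a-b\equiv i-j \pmod 5$. The paper's closing path is exactly such a valley, i.e.\ the class index first decreases and then increases:
\begin{itemize}
\item for $j=i$ it has depth $k-1$ (for $k=2$ it is a vertex of $N^m(x)\cap N^m(y)$);
\item for $j=i-2$, $k\ge 3$, it appends $x\to w\to v$ with $w\in N^m(x)$, $v\in N^m(w)$ and reduces to $j=i$;
\item for $j=i-2$, $k=2$, it uses $w\in N^m(x)\cap N^+_D(y)$, exactly as you do.
\end{itemize}

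The gap is that you never see why the order ``all down-steps, then all up-steps'' is forced, and your fallback breaks it. When you step up from $w\in V_{l-1}'$ to $u\in V_l'$, you cannot prevent $c(wu)=c_u$. This holds whenever $w\in N^m(u)$, which is the typical case; in Example~\ref{ex-C4}, every edge between consecutive classes other than the matching edges of colour $\alpha$ is of this form. A subsequent down-step out of $u$ again has colour $c_u$. So no interior vertex may be entered by an up-step and left by a down-step. This is a structural obstruction, not a bounded list of forbidden colours that greedy choice can avoid. Your ``oscillations'' (repeated backward--forward pairs) create exactly such vertices. Hence your closing ``general rule'' is false as stated: it constrains only the first and last steps and then claims greedy choice succeeds against $O(k)$ constraints. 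Separately, the extra ``backward--backward'' pair you propose for $j=i-2$ gives $a-b=-2\equiv 3$, not the residue $2$ you correctly required earlier.

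The fix also removes the mod-$5$ search. Put all down-steps first, with $(a,b)=(k-1,k-1)$ if $j=i$ and $(a,b)=(k,k-2)$ if $j=i-2$. These have $a,b\ge 1$ except when $(k,j)=(2,i-2)$, which is your path $y\to y_1\to x$ with $y_1\in N^+_D(y)\cap N^m(x)$. Along such a valley the greedy argument does go through:
\begin{itemize}
\item the down-edges carry $c_y,c_{y_1},\dots$, where $c_y$ is fresh by the connector property and the others are chosen fresh;
\item the up-edges carry colours chosen fresh among the pairwise distinct colours on out-edges in $D$;
\item the last edge carries the reserved colour $c_x$.
\end{itemize}
Your first explicit attempt took $k-2$ down-steps instead of $k-1$, which is where the count $2k-1$ came from.
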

\begin{proof}
Suppose $xzy$ is a rainbow $x,y$-connector.
First, consider the case $j=i$. For $C_4$ let $w\in (N^m(x)\cap N^m(y))\setminus \{z\}.$ Then $xzywx$ is a rainbow $C_4$. For $k>2$, let $U:= \{c_y, c_x, c(xz), c(yz)\}$ and $W:=\{x,y,z\}$. Select $x_0\in N^m(x)\setminus W$ so that $c_{x_0}\notin U$ and add $x_0$ to $W$, $c_{x_0}$ to $U$. Select additional $k-3$ vertices $x_1, \dots, x_{k-3}$ one by one so that for every $j$, $x_j\in N^m(x_{j-1})\setminus W$ and $c_{x_j}\notin U$; add $c_{x_j}$ to $U$ and $x_j$ to $W$. Select $z'\in N^m(x_{k-3})\setminus W$, add $z'$ to $W$, and 
proceed backward to close the cycle. 

The proof in the case of $j=i-2$ is very similar. For $k=2$, let $w\in N^m(x)\cap N^+_D(y)$ be such that $c(yw)\notin \{c_x, c(xz), c(yz)\}$. For $k\geq 3$, let $w\in N^m(x)$ be such that $c_w\notin \{c_x, c_y, c(xz), c(yz)\}$ and let $v\in N^m(w)$ be such that $c_v\notin \{c_x, c_y, c_w, c(xz), c(yz)\}$. Then $y,v \in V_j'$ and $yzxwv$ is a rainbow path of length four that does not contain $c_y, c_v$. Thus we can proceed as in the previous case.
\end{proof}

The following observation will be used to obtain rainbow connectors.

 \begin{fact}\label{ext-fact4.5}
 \begin{itemize}
 \item[(a)] 
 If for some $z\in V(H)$ and some $i\in \{0,\dots, 4\}$ there exist four vertices $x_1, x_2, x_3, x_4\in V_i'$ such that colors $c(x_1z), c(x_2z), c(x_3z), c(x_3z)$ are different, and for every $j\in \{1, \dots, 4\}$, $c(x_jz)\neq c_{x_j}$, then there is a rainbow $x,y$-connector for some $x,y\in V_i'$.
\item[(b)] If $z\in V_{i-1}'\cup V_{i-1}''$ is incident to at least four special edges $vz$ with $v\in V_i'$, then there is a color $\alpha_z$ such that all but at most one of these special edges have color $\alpha_z$ or there is a rainbow $x,y$-connector for some $x, y\in V_i'$.
\end{itemize}
 \end{fact}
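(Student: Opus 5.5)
For part (a), the plan is to build the connector directly around the common vertex $z$. Write $\alpha_j:=c(x_jz)$; these four colors are distinct, and $\alpha_j\neq c_{x_j}$ for every $j$. By Fact \ref{ext-fact3}, the colors $c_{x_1},\dots,c_{x_4}$ are also pairwise distinct. For a pair $j\neq l$, the path $x_jzx_l$ is a rainbow $x_j,x_l$-connector exactly when the four colors $c_{x_j},c_{x_l},\alpha_j,\alpha_l$ are distinct. We already know $c_{x_j}\neq c_{x_l}$, $\alpha_j\neq\alpha_l$, $\alpha_j\neq c_{x_j}$ and $\alpha_l\neq c_{x_l}$. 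So the pair fails only if $\alpha_l=c_{x_j}$ or $\alpha_j=c_{x_l}$.

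To find a good pair, I would define an auxiliary digraph $F$ on $\{1,2,3,4\}$ with an arc $j\to l$ whenever $\alpha_l=c_{x_j}$. Each color equals at most one $\alpha_l$, so every vertex has out-degree at most one in $F$. Likewise each color equals at most one $c_{x_j}$, so every vertex has in-degree at most one. Hence $F$ has at most four arcs. A pair $\{j,l\}$ is bad exactly when $j$ and $l$ are joined by an arc in either direction, so at most four of the six pairs are bad. At least two pairs are therefore good, and any good pair gives the required connector with $x=x_j$, $y=x_l\in V_i'$. I expect no real obstacle here; the only care needed is checking that $z$ is distinct from the $x_j$, which holds because $z$ is adjacent to them.

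For part (b), let $v_1z,\dots,v_s z$ with $s\geq 4$ be the special edges at $z$, where $v_j\in V_i'$. By the definition of a special edge, $c(v_jz)\neq c_{v_j}$ for every $j$. Suppose no color $\alpha_z$ works, meaning that for every color at least two of these edges avoid it. I will then pick four of the edges with pairwise distinct colors, after which part (a) applies with $x_j=v_j$ and gives the connector.

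The choice of four edges goes by counting color classes. Group the edges by color and let $p$ be the number of distinct colors.
\begin{itemize}
\item If $p\geq 4$, take one edge from each of four classes.
\item If $p=1$, every edge has the same color, so that color works as $\alpha_z$; this is excluded.
\item If $p=2$ or $p=3$, the colors are too few and need separate handling.
\end{itemize}
This case ($p\in\{2,3\}$) is the main obstacle: part (a) cannot be applied directly, because it needs four distinct colors. I would instead test pairs $v_j,v_l$ taken from different color classes directly. Such a pair fails only if $c(v_lz)=c_{v_j}$ or $c(v_jz)=c_{v_l}$, and by Fact \ref{ext-fact3} each color equals $c_v$ for at most one $v$. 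If each of two classes has at least two edges, there are at least four cross pairs, and each class color can spoil only the pairs meeting one vertex. So some cross pair should be good.

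The remaining subcase is when at most one class is large and the others are singletons. Here I expect to fall back on the conclusion ``all but at most one edge has color $\alpha_z$'' when only one edge lies outside the majority class. The genuinely leftover configurations (for example, classes of sizes $2,1,1$) must be checked by the same cross-pair argument. If that cross-pair check fails in some configuration, that is where the proof of (b) would need an extra idea.
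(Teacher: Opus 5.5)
Your part (a) is correct. The auxiliary digraph $F$ has an arc $j\to l$ when $c(x_lz)=c_{x_j}$. It has out-degree at most one because the $c(x_lz)$ are distinct, and no loops because $c(x_jz)\neq c_{x_j}$. So at most four of the six pairs are bad. This is a tidy alternative to the paper's argument, which discards the at most one $x_l$ with $c(x_lz)=c_{x_1}$ and then uses $c_{x_2}\neq c_{x_3}$ to find a partner for $x_1$. In (b), your reduction to (a) when $p\ge 4$ is fine, and so is your cross-pair argument when two color classes each contain at least two edges.

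The gap is in (b). You never settle the configuration where one color $\alpha$ occurs on at least two special edges and the remaining edges are singletons of two further colors, i.e.\ class sizes $(k,1,1)$ with $k\ge 2$. This is not a stray corner. It is the only configuration your case analysis leaves open, and there the conclusion ``all but at most one edge has color $\alpha_z$'' genuinely fails, so a connector must be produced. You explicitly leave open whether one exists, so as written the proposal does not prove (b).

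Your two-large-classes argument does not transfer verbatim: a singleton class $\{b\}$ is spoiled entirely if $c_b=\alpha$. The missing step is to use both outside edges. Suppose $v_1z,v_2z$ have color $\alpha$ and $v_3z,v_4z$ have colors different from $\alpha$.
\begin{itemize}
\item By Fact~\ref{ext-fact3}, $c_{v_3}\neq c_{v_4}$, so one of them, say $v_3$, has $c_{v_3}\neq\alpha$.
\item Likewise $c_{v_1}\neq c_{v_2}$, so one of them, say $v_1$, has $c_{v_1}\neq c(v_3z)$.
\item Specialness gives $\alpha=c(v_1z)\neq c_{v_1}$ and $c(v_3z)\neq c_{v_3}$; also $\alpha\neq c(v_3z)$, and $c_{v_1}\neq c_{v_3}$ by Fact~\ref{ext-fact3}.
\end{itemize}
Hence the four colors $c_{v_1},c_{v_3},\alpha,c(v_3z)$ are distinct, and $v_1zv_3$ is a rainbow $v_1,v_3$-connector.

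This argument needs only one color on at least two edges plus at least two edges avoiding it. With at least four edges and $p\le 3$, pigeonhole gives a repeated color. So the argument disposes of every case with $p\le 3$ at once, and your finer split by class sizes is unnecessary. This is exactly how the paper proves (b).
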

 \begin{proof}
 For (a),  since the colors on edges $x_iz$ are different, we may assume $c(x_2z)\neq c_{x_1}$ and $c(x_3z)\neq c_{x_1}$. Without loss of generality, $c(x_1z)\neq c_{x_2}.$ We have $c_{x_1}\neq c_{x_2}$, and so $x_1zx_2$ is a rainbow $x_1,x_2$-connector.
 For (b), from (a) we may assume that there are at most three colors on special edges incident to $z$. Let $x_1, x_2, x_3, x_4$ be such that $\alpha_z:=c(zx_1)=c(zx_2)$ and suppose $\alpha_z \notin \{c(zx_3), c(zx_4)\}.$ Without loss of generality $c_{x_3}\neq \alpha_z$, and $c_{x_1}\neq c(zx_3).$ Then $x_1zx_3$ is a rainbow $x_1,x_3$-connector.
\end{proof} 
%
To obtain rainbow cycles of bigger lengths we will use shorter rainbow cycles and extend them by wrapping around the sets $V_0',\dots, V_4'$.

\begin{defn}\label{extendable-def}A rainbow cycle is called {\it extendable} if it contains an edge $ux$ for some $x\in V_0'\cup \dots \cup V_4'$ and $u\in N^m(x)$.
\end{defn}
\begin{lemma}Let $l\in \{4,8,12,16\}$. Let $k\in \mathbb{Z}^+$  be such that $k\ll n$. If $H$  contains an extendable cycle of length $l$ then it contains an extendable cycle of length $l+5k$.
\end{lemma}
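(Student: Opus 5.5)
Given an extendable rainbow cycle $Q$ of length $l$ with edge $ux$, $x\in V_i'$, $u\in N^m(x)$ (so $u\in V_{i-1}'$ and $c(ux)=c_x$), I will delete the edge $ux$ and replace it by a rainbow path from $u$ to $x$ of length $5k+1$ that winds $k$ times around $V_0',\dots,V_4'$. The new cycle has length $l-1+5k+1=l+5k$. The path is $u=y_0,y_1,\dots,y_{5k},y_{5k+1}=x$ with $y_j\in V_{i-1+j}'$ (indices mod five). This is consistent, since $y_{5k}\in V_{i-1}'$ and $y_{5k+1}=x\in V_i'$. The last edge $y_{5k}x$ is chosen with $y_{5k}\in N^m(x)$, so its colour is $c_x$, which the original cycle used only on $ux$. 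The new cycle is therefore again extendable via the edge $y_{5k}x$.

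The vertices are chosen greedily forward, mirroring the proofs of Lemma \ref{ext-lem2} and Lemma \ref{ext-lem-path}. Let $U$ be the set of colours on $Q$ other than $c_x$, and let $W=V(Q)$. For $j=1,\dots,5k-1$, pick $y_j\in N^+_D(y_{j-1})\cap V_{i-1+j}'$ not in $W$ with $c(y_{j-1}y_j)\notin U$, then add the vertex to $W$ and the colour to $U$. There are at least $(1-2\beta)|V'_{i-1+j}|$ candidates by (\ref{eq-new}) and Lemma \ref{lem-V'}. The out-edges of $y_{j-1}$ in $D$ have pairwise distinct colours, so at most $|U|+|W|\le 2(l+5k)$ candidates are forbidden, which is $O(k)\ll n$. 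The last choice is the closing vertex $y_{5k}\in N^+_D(y_{5k-1})\cap N^m(x)$, with $y_{5k}\notin W$ and $c(y_{5k-1}y_{5k})\notin U\cup\{c_x\}$. By (\ref{eq-new}) and (\ref{ext-Nm}), this intersection inside $V_{i-1}'$ has size at least $(1-3\beta)|V_{i-1}'|$, so again only $O(k)$ vertices are excluded. Distinctness of the out-colours of $y_{5k-1}$ handles the colour condition.

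It remains to check that the new path colours avoid $c_x$, the one colour of $Q$ kept in the new cycle without being placed in $U$. Each edge $y_{j-1}y_j$ lies in $D$, and I choose it to be a forward out-edge rather than an $N^m$ edge. If such an edge had colour $c_x$, then together with $y_{5k}x$ and $ux$ it would create a colour clash. I simply add $c_x$ to $U$ from the start, which forbids at most one more out-neighbour per step. Triangle-freeness is irrelevant here, since we only need a cycle, and all $y_j$ are new vertices.

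The main obstacle is bookkeeping rather than substance: the path must be rainbow, disjoint from $Q$, and its colours disjoint from those of $Q$, with $c_x$ reused exactly once on the closing edge. I expect the genuinely delicate point to be the closing step, where I must intersect $N^+_D(y_{5k-1})$ with $N^m(x)$. Both sets are nearly all of $V_{i-1}'$ by (\ref{eq-new}) and (\ref{ext-Nm}), and since $k\ll n$ the greedy argument goes through. The restriction $l\in\{4,8,12,16\}$ plays no role beyond $l$ being bounded.
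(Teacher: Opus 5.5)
Your proposal is correct and is essentially the paper's argument. The paper inducts on $k$, at each step replacing $ux$ by a path $uv_0v_1v_2v_3v_4x$ that winds once around $V_i',\dots,V_{i+4}'$ and closes through $N^+_D(v_3)\cap N^m(x)$. You build all $k$ windings in one greedy pass, with the same bookkeeping: you forbid every colour of the old cycle, including $c_x$, and every old vertex.
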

\begin{proof}We proceed by induction on $k\in \mathbb{Z}^+\cup \{0\}$. For the inductive step, suppose $C$ is an extendable cycle of length $l+5(k-1)$. Let $U:= c(E(C))$ and $W:=V(C)$.  Let $ux\in E(C)$ be such that $x\in V_i'$ and $u\in N^m(x)$. Let $v_0\in (N^+_D(u)\cap V_i')\setminus W$, $v_1\in (N^+_D(v_0) \cap V_{i+1}')\setminus W, \dots, v_4\in (N^+_D(v_3)\cap V_{i+4}'\cap N^m(x))\setminus W$ be such that $c(uv_0),c(v_0v_1),\dots, c(v_3v_4)$ are four different colors that are not in $U$. Removing the edge $ux$ from $C$ and adding $v_0v_1v_2v_3v_4$ gives an extendable cycle of length ${l+5k}$. 
\end{proof}
To obtain rainbow cycles we will have to consider different lengths $4,8,12,16$ but every cycle that we will obtain will be extendable.
There can be different types of exceptional edges in $H$. One type is special edges $vw$ with $v\in V_{i}'$ and $w\in V_{i-1}'\cup V_{i-1}''$, but there can also be edges from $V_i'$ to $V_{i+2}''$.

For $l \in \{1, \dots, 4\}$ let $S_i(l)$ be the set of vertices $v\in V_i'$ such that $v$ has neighbors in $V_{i+2}''$ of $l$ distinct colors that are different than $c_v$.
\begin{lemma}\label{ext-lem-S(l)}
Let $l \in \{ 1, \dots, 4\}$. If for some $i\in \{0,\dots, 4\}$, $|S_i(l)|\geq (2l{{+}}1)l$, then $H$ has an extendable cycle of length $4q$ for every $q=1,\dots, l$.
\end{lemma}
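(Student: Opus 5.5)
Each vertex $x\in S_i(l)$ has $l$ neighbours $z_1(x),\dots,z_l(x)\in V_{i+2}''$ such that the colours $c(xz_j(x))$ are pairwise distinct and all differ from $c_x$. For a fixed $q\le l$ I will build a rainbow cycle of length $4q$ that contains an edge $ux$ with $u\in N^m(x)$, so the cycle is extendable. The building block is a ``detour''. Take $x\in S_i(l)$ and a neighbour $z\in V_{i+2}''$ of $x$ with $c(xz)\neq c_x$. By Definition \ref{ext-def-V''}, $z$ has at least $\gamma|V_{i+3}'|$ out-neighbours in $V_{i+3}'$ under $D$, and these carry distinct colours. Also, $V_{i+3}'$ reaches $V_{i+4}'$ and then $V_{i}'$ through the near-complete structure given by (\ref{eq-new}) and (\ref{ext-Nm}). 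So from $x$ we can go $x\to z\to a\in V_{i+3}'\to b\in V_{i+4}'\to y\in N^m(\cdot)$, or close back through $N^m(x)$.

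\textbf{Case $q=1$.} I aim for a rainbow $C_4$ of the form $x z a u x$, where $a\in N^+_D(z)\cap V_{i+3}'$ and $u\in N^m(x)\cap V_{i-1}'=V_{i+4}'$ with $a u\in E(D)$, so $c(au)$ is one of $a$'s distinct out-colours. This is where I expect the real difficulty. The colours $c(xz)$, $c(za)$, $c(au)$, $c_x=c(ux)$ must all be different, and two collisions are possible. First, $c(za)$ may equal $c_x$ or $c(xz)$; this excludes at most two choices of $a$. Second, $c(au)$ may equal one of the three colours already used; this excludes at most three choices of $u$ given $a$. Since $a$ has $(1-\eta)|V_{i+4}|$ out-neighbours in $V_{i+4}$ and $N^m(x)$ has size close to $|V_{i+4}'|$, the two sets intersect heavily, and a valid $u$ exists.

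The same works with a single vertex $x$ for every $q$, so I suspect the hypothesis $|S_i(l)|\geq (2l+1)l$ is really needed because of the triangle-free and $C_4$ bookkeeping, not colour counting. The likely problem is that edges between $V_{i+3}'$ and $V_{i+4}'$ carry the colour $c_u$, which could coincide with $c(xz)$ for that particular $x$. To be safe I will use many $x$'s: by Fact \ref{ext-fact3} the colours $c_x$ for $x\in S_i(l)$ are pairwise distinct, and any bad colour coincidence kills at most $O(l)$ of them. Hence a pool of $(2l+1)l$ vertices leaves some $x$ usable for every needed configuration.

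\textbf{Case $q\ge 2$.} For a cycle of length $4q$, I concatenate $q$ detours through distinct vertices $x_1,\dots,x_q\in S_i(l)$ and distinct $z_j=z_j(x_j)$. Each detour is a path $x_j z_j a_j b_j x_{j+1}$ of length 4, where $b_j\in N^m(x_{j+1})$ and $a_j\in V_{i+3}'$, $b_j\in V_{i+4}'$, and indices are taken cyclically. This gives $4q$ edges, and the edge $b_q x_1$ makes the cycle extendable. I choose the vertices greedily, one after another. At each step fewer than $8l$ colours and vertices are forbidden, while each choice set has linear size. The exception is the choice of $z_j$: here only $l$ options exist per $x_j$. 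This is exactly why the pool must be large. A colour can block at most one $z_j(x)$ for each $x$, but it can block $c_{x}$ for at most one $x$ in total. Counting gives at most $2l$ bad $x$'s per selected index among $(2l+1)l$ candidates, so the greedy choice never gets stuck.

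The main obstacle is precisely this counting of $z$-colour collisions across many $x$'s. I would handle it by pigeonhole on the $l$ distinct colours at each $x$.
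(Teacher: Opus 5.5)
\textbf{Verdict: there is a genuine gap in the case $q\ge 2$.}

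\textbf{What matches the paper.} Your case $q=1$ is correct and is exactly the paper's argument. A single $x\in S_i(l)$ suffices, because $a$ is chosen among the $D$-out-neighbours of $z$ and $u$ among those of $a$, so each colour restriction kills only $O(1)$ choices. In particular, the fact that most edges between $V_{i+3}'$ and $V_{i+4}'$ carry $c_u$ causes no trouble. Your cycle for $q\ge 2$, namely $x_1z_1a_1b_1x_2\cdots a_qb_qx_1$, is also the paper's $x_1y_1u_1v_1x_2\cdots u_qv_qx_1$. Once the $x_j$ and $z_j$ are fixed, the $a_j,b_j$ can indeed be chosen greedily.

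\textbf{Where it fails.} The gap is the choice of the pairs $(x_j,z_j)$. These must satisfy three conditions: the $z_j$ are distinct, the colours $c(x_jz_j)$ are distinct, and $c(x_jz_j)\neq c_{x_{j'}}$ for all $j\neq j'$. Your claim of ``at most $2l$ bad $x$'s per selected index'' does not follow from ``a colour can block at most one $z(x)$ for each $x$''. That statement bounds the damage at one candidate, not the number of candidates damaged. Nothing prevents $c_{x_1}$ from being one of the $l$ fixed colours of \emph{every} other vertex of $S_i(l)$, or $z_1$ from being a fixed neighbour of every other vertex. For example, take $l=2$ and suppose each remaining $x$ has fixed neighbours $z_1$ and some $z'_x$ with $c(xz'_x)=c_{x_1}$. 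Then after one step every candidate has both options dead. In general, after $j-1$ steps up to $3(j-1)$ of the $l$ options of every candidate can be blocked, so your greedy can get stuck. Contrary to your guess, the hypothesis $|S_i(l)|\ge(2l+1)l$ is needed precisely for this colour counting, not for triangle-free or $C_4$ bookkeeping.

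\textbf{The missing idea.} The paper uses an auxiliary digraph on $S_i(l)$: put an arc $x\to y$ if $c_x$ is one of the $l$ fixed colours at $y$. The $c_x$ are pairwise distinct by Fact~\ref{ext-fact3}, so every in-degree is at most $l$, although out-degrees can be huge; this asymmetry is what your count overlooks. Hence every subdigraph has a vertex of total degree at most $2l$, so the underlying graph is $(2l+1)$-colourable. With $|S_i(l)|\ge(2l+1)l$ this gives an independent set $x_1,\dots,x_l$. For these vertices no $c_{x_j}$ is a fixed colour of another $x_{j'}$, so the third condition holds for every choice of the $z_j$.

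\textbf{A remaining step to justify.} You still have to choose distinct $z_j$ with distinct colours $c(x_jz_j)$. The paper asserts this matching directly. Each earlier pair can block two of the $l$ options at $x_j$, though, so a naive greedy does not give it when $q$ is close to $l$. You should supply a short argument here. Working with a larger pool is harmless, since later only some constant bound on $|S_i(4)|$ is used.
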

\begin{proof}
Without loss of generality suppose $|S_0(l)|\geq (2l{{+}}1)l$. 
For each $v\in S_0(l)$ we fix $l$ different colors on the edges between $v$ and $V_2''$ and consider the following auxiliary digraph on $S_0(l)$. For $x,y\in S_0(l)$ put the arc from $x$ to $y$ if $c_x$ is one of the colors on the edges from $y$ to $V_2''$. Then the maximum in-degree is at most $l$. Thus there is a vertex $v\in S_0(l)$ of the total degree (the sum of in- and out-degrees) at most $2l$. Consequently, the chromatic number of the digraph is at most $2l+1$ and so there is an independent set containing $l$ vertices, $x_1, \dots, x_l\in S_0(l)$. Then there is a matching $M$ in $H[V_0', V_{2}'']$ consisting of $l$ edges  $x_1y_1, \dots, x_ly_l$ with $x_j\in V_0'$, $y_j\in V_2''$  such that all colors $c_{x_1},\dots, c_{x_l}, c(x_1y_1),\dots, c(x_ly_l)$   are different. 
For $q=1$, select $u_1\in N^+_D(y_1)\cap V_3'$ so that $c(y_1u_1)\notin \{c_{x_1}, c(x_1y_1)\}.$ Now select $v_1\in N^m(x_1)\cap N^+_D(u_1)$ so that $c(u_1v_1)\notin \{c_{x_1}, c(x_1y_1), c(y_1u_1)\}$. Then $x_1y_1u_1v_1x_1$ is an extendable cycle of length four. 
For $1<q\leq l$,   pick $u_1, \dots, u_q$ and $v_1, \dots, v_q$ so that $x_1y_1u_1v_1x_2\dots u_qv_qx_1$ is an extendable cycle of length $4q$. 
\end{proof}

\begin{cor}\label{ext-cor-S(l)}
    If $H$ does not have an extendable cycle for some $l\in \{4, 8, 12, 16\}$ then for every $i$, $|S_i(4)| < 36$.
\end{cor}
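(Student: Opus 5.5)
This is a direct contrapositive of Lemma \ref{ext-lem-S(l)} applied with $l=4$. Suppose that for some $i$ we have $|S_i(4)|\geq 36$. Since $(2l+1)l = 9\cdot 4 = 36$ when $l=4$, the hypothesis of Lemma \ref{ext-lem-S(l)} holds for this $i$. The lemma then gives an extendable cycle of length $4q$ for every $q\in\{1,2,3,4\}$, that is, extendable cycles of lengths $4,8,12$ and $16$. This contradicts the assumption that, for some $l\in\{4,8,12,16\}$, $H$ has no extendable cycle of length $l$. Hence $|S_i(4)|<36$ for every $i$.

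The only point needing care is the reading of the statement. It should be parsed as ``if there is some $l\in\{4,8,12,16\}$ for which $H$ has no extendable cycle of length $l$''. The lemma supplies all four lengths at once, so missing any single one of them is enough to force $|S_i(4)|<36$. If $H$ lacked extendable cycles of all four lengths, the same conclusion would follow a fortiori.

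A side remark: since $S_i(4)\subseteq S_i(3)\subseteq S_i(2)\subseteq S_i(1)$, one could obtain finer bounds tied to the specific missing length. For example, if extendable $C_4$'s are missing, then $|S_i(1)|<3$. The stated corollary does not require these, so I would not pursue them. There is no real obstacle here beyond checking the arithmetic $(2\cdot4+1)\cdot 4=36$.
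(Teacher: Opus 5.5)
Your proof is correct and is exactly the argument the paper intends: the corollary is stated without proof as the contrapositive of Lemma~\ref{ext-lem-S(l)} with $l=4$. Since $(2\cdot 4+1)\cdot 4=36$, that lemma produces extendable cycles of all four lengths $4,8,12,16$ at once, and your reading of the hypothesis, that a single missing length suffices, is the right one.
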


We know further analyze special edges.
\begin{defn}
A special edge $vw$ with $v\in V_{i}'$ and $w\in V_{i-1}'$ is called {\it special of type 1} if $c_v=c_w$ and of {\it type 2} otherwise.
\end{defn}
Special edges of type 1 between $V_{i-1}', V_i'$ form a matching, because for every color $\alpha$ there is at most one vertex in $V_j'$ with $c_v=\alpha$. In addition, by Lemma \ref{ext-lem2} we may assume that for a vertex $w\in V_{i-1}'$ all special edges of type 2 that are incident to $w$ have color $c_w.$ We have the following lemma on the colors of special edges between $V_i'$ and $V_{i-1}'$. 

\begin{lemma}\label{ext-lem-smallset}
 Let $k\in \mathbb{Z}$ with $k\geq 2$, and $k\ll n$ and suppose there is no rainbow cycle in $H$ of length $2k$.
 For every $i$, there exist  sets $U_i\subseteq V_i'$ and $U_{i-1}\subseteq V_{i-1}'$ such that $|U_i|\geq (1-2\beta)|V_i'|,$ $|U_{i-1}|\geq (1-2\beta)|V_{i-1}'|,$ and all special edges in $H[U_i, U_{i-1}]$ are of type 1.
\end{lemma}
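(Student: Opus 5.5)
The plan is to study the bipartite graph $F$ of type-2 special edges between $V_i'$ and $V_{i-1}'$. If $F$ has a vertex cover of size at most $2\beta|V_{i-1}'|$ (and so at most $2\beta|V_i'|$ after adjusting constants, since all $|V_j'|$ are close to $n/5$), let $U_i$ and $U_{i-1}$ be $V_i'$ and $V_{i-1}'$ minus that cover. Then every special edge left in $H[U_i,U_{i-1}]$ is of type 1, which is the statement. By K\H{o}nig's theorem it therefore suffices to show that a matching in $F$ of size $s:=\beta|V_{i-1}'|$ (in fact any matching of size large compared with $k$) produces a rainbow $C_{2k}$.

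First I pin down the colors on $F$. Let $vw\in F$ with $v\in V_i'$ and $w\in V_{i-1}'$. Then $c(vw)\neq c_v$ (the edge is special) and $c_v\neq c_w$ (type 2). Lemma \ref{ext-lem2} forbids $c_u,c_v,c(uv)$ from being three distinct colors, so $c(vw)=c_w$. By Fact \ref{ext-fact3}, distinct vertices of $V_{i-1}'$ have distinct colors $c_w$. Hence a matching $v_1w_1,\dots,v_sw_s$ in $F$ has pairwise distinct edge colors $c_{w_1},\dots,c_{w_s}$. The colors $c_{v_j}$ are also pairwise distinct. A coincidence $c_{v_j}=c_{w_l}$ with $j\neq l$ pairs each $v_j$ with at most one $w_l$, since each color class meets $V_{i-1}'$ in at most one vertex. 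So greedily passing to a sub-matching of size $\Omega(s)$, I may assume all colors in $\{c_{v_j}\}\cup\{c_{w_j}\}$ are distinct.

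Next I build the cycle from two matching edges $v_1w_1$ and $v_2w_2$. On the $w$-side, $w_1$ and $w_2$ each have out-neighbours in $D$ covering almost all of $V_i'$, with distinct colors at each of them. So I pick $y\in N^+_D(w_1)\cap N^+_D(w_2)\cap V_i'$ with $c(w_1y)$ and $c(yw_2)$ outside the at most six colors already used and different from each other; this is possible because the out-edges of a fixed vertex have distinct colors, so only $O(1)$ choices of $y$ are excluded. On the $v$-side I pick $x\in N^m(v_1)\cap N^m(v_2)$, using (\ref{ext-Nm}) with $\beta\ll1$, so that $x\notin\{w_1,w_2\}$. The cycle $v_1w_1yw_2v_2xv_1$ then has colors $c_{w_1},c(w_1y),c(yw_2),c_{w_2},c_{v_2},c_{v_1}$, which are all distinct: a rainbow $C_6$.

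For a general $2k$ I lengthen this as in Lemma \ref{ext-lem-path}. The $w_1$–$y$–$w_2$ segment is replaced by a longer alternating walk between $V_{i-1}'$ and $V_i'$. Each step uses either an $N^m$-edge, which carries a fresh color $c_z$ by Fact \ref{ext-fact3}, or a $D$-out-edge chosen to avoid the $O(k)$ used colors and vertices. Because the sets are huge and $k\ll n$, each step excludes only $O(k)$ candidates. Alternatively, one chains $k/2$ matching edges in a cyclic pattern $v_jw_j\,y_j\,w_{j+1}$ on the $w$-side and closes through vertices $x_j\in N^m(v_j)\cap N^m(v_{j+1})$ on the $v$-side. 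This gives cycles of length $3\cdot(\text{number of edges used})$, so I would mix the two constructions to get exactly $2k$. For $k=2$ I will instead try to extract a rainbow connector and apply Lemma \ref{ext-lem-path} directly, for example via the path $v_1w_1y$ or via Fact \ref{ext-fact4.5}.

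I expect the main obstacle to be this length bookkeeping, in particular the case $k=2$. A $C_4$ through two type-2 edges has too little room to realise both the $w$-side and the $v$-side connections with distinct colors. There I would use a single type-2 edge $vw$ together with $y\in N^+_D(w)\cap V_i'$ and $x\in N^m(v)\cap N^m(y)$. The cycle $vwyxv$ has colors $c_w$, $c(wy)$, $c_y$, $c_v$, and these are distinct once $y$ is chosen with $c(wy)\neq c_y$ and $c_y\notin\{c_v,c_w\}$. The first condition holds whenever $w\notin N^m(y)$, which I can arrange since $w$ has many out-neighbours in $V_i'$ and only a few edges of the required form into $N^m$ structures.
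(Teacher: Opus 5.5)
\textbf{There is a genuine gap.} The K\H{o}nig reduction is fine as a framework. The problem is the step meant to turn a large type-2 matching into a rainbow $C_{2k}$: none of your constructions can work.

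\textbf{Why the constructions fail.} By Lemma~\ref{ext-lem2}, every edge $wy$ with $w\in V_{i-1}'$, $y\in V_i'$ satisfies $c(wy)=c_y$, or $c(wy)=c_w$, or $c_y=c_w$; otherwise the cycle is already found.
\begin{itemize}
\item \emph{Your $C_6$.} In $v_1w_1yw_2v_2xv_1$ the colours $c_{w_1},c_{w_2}$ already sit on the two matching edges. If $c_y\notin\{c_{w_1},c_{w_2}\}$, rainbowness forces $c(w_1y)=c(yw_2)=c_y$. If $c_y=c_{w_1}$, then $c(yw_2)\in\{c_{w_1},c_{w_2}\}$; the case $c_y=c_{w_2}$ is symmetric. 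So this cycle is never rainbow.
\item \emph{Where the argument slips.} The claim that ``only $O(1)$ choices of $y$ are excluded'' confuses distinct colours on the out-edges of one vertex with distinct colours on two edges entering a common head $y$. In the extremal structure almost every edge from $V_{i-1}'$ into $y$ has colour $c_y$.
\item \emph{The other constructions.} The same collision occurs at every intermediate $V_i'$-vertex of your chained variant $v_jw_j\,y_j\,w_{j+1}$ and of the alternating-walk lengthening.
\item \emph{Your $k=2$ fallback.} If $c(wy)\neq c_y$, then $c(wy)=c_w=c(vw)$, so that $C_4$ is not rainbow either. Moreover, $w\notin N^m(y)$ cannot be arranged, as the next paragraph shows.
\end{itemize}

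\textbf{The missing idea.} Join consecutive matching edges directly, with no intermediate vertex. This is contrary to your remark that a $C_4$ through two type-2 edges has too little room.

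Here is why such direct links are plentiful. Fix $w\in V_{i-1}'$.
\begin{itemize}
\item By Definition~\ref{def-ext-D}, the out-edge of $w$ of colour $c_w$ goes into $V_{i-2}'$. So $c(wy)\ne c_w$ for every $y\in N^+_D(w)\cap V_i'$.
\item By Fact~\ref{ext-fact3}, at most one such $y$ has $c_y=c_w$.
\item For all the others, Lemma~\ref{ext-lem2} gives $c(wy)=c_y$, i.e.\ $w\in N^m(y)$.
\end{itemize}
Hence $w\in N^m(y)$ for all but $O(\eta n)$ vertices $y\in V_i'$.

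\textbf{Completing your route.} On your pruned matching $\{v_jw_j\}$ of size $s=\Omega(\beta n)\gg\eta n$, define the auxiliary digraph with $j\to l$ iff $w_j\in N^m(v_l)$. It misses only $O(\eta n s)$ arcs, so it contains a directed $k$-cycle. That cycle yields $v_1w_1v_2w_2\cdots v_kw_kv_1$ with colours $c_{w_1},c_{v_2},c_{w_2},\dots,c_{w_k},c_{v_1}$, all distinct after your pruning. This is a rainbow $C_{2k}$, including $k=2$.

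\textbf{How the paper does it.} The paper builds exactly this alternating chain greedily and needs no K\H{o}nig step. It maintains $x_1y_1\cdots x_ly_l$ with $y_jx_{j+1}$ of colour $c_{x_{j+1}}$ and $y_lx_1$ of colour $c_{x_1}$. If no type-2 edge of a new colour joins $N^m(x_1)$ to $N^+_D(y_l)\cap V_i'$, those two neighbourhoods, minus $O(l)$ vertices, are themselves the required $U_{i-1}$ and $U_i$.
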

\begin{proof} Without loss of generality $i=1.$
For every color $\alpha$ there is at most one vertex $v\in V_{0}'$ such that $c_v=\alpha$, and so at most one vertex $v$ that is incident to a special edge of type two of color $\alpha$. Let $x_1\in V_1'$, $y_1\in V_{0}'$ be such that $x_1y_1$ is special of type 2.  
Suppose we have constructed a matching $x_1y_1, \dots, x_ly_l$ with $x_1,\dots, x_l\in V_1'$, $y_1,\dots, y_l\in V_1'$ consisting of special edges of type two that satisfies
the following:
\begin{itemize} \item All colors in $S:=\{c(x_1y_1), \dots, c(x_ly_l), c_{x_1}, \dots, c_{x_l}\}$ are different,
\item for $j=1, \dots, l-1$, $y_jx_{j+1}$ is an edge of color $c_{x_{j+1}}$, and
\item if $l\geq 2$, then $x_1y_l$ is an edge of color $c_{x_1}$.
\end{itemize}
\begin{figure}[h] 
    \centering
    \includegraphics[width=1\textwidth]{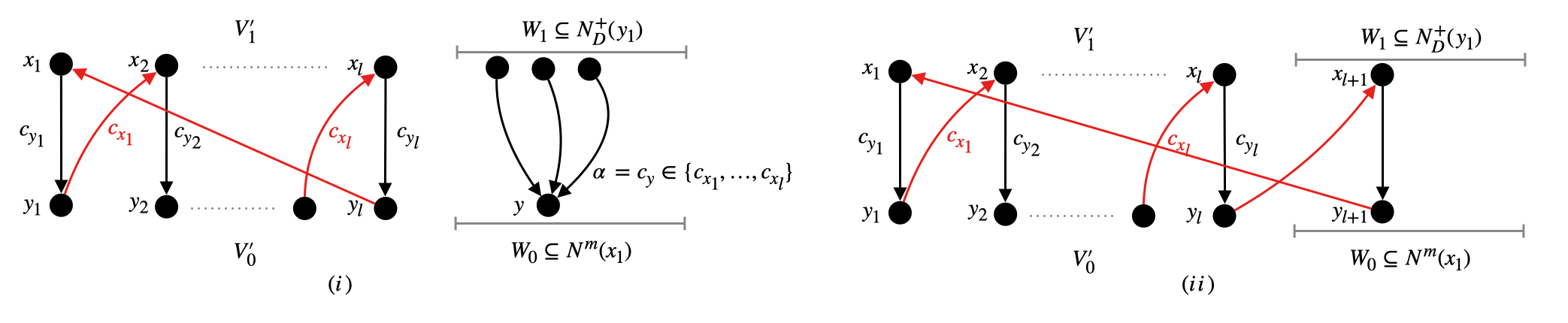} 
    \caption{Two cases in the proof of Lemma \ref{ext-lem-smallset}}
    \label{fig:Lem-4-19}
\end{figure}

We will argue that either we can add another edge to the matching or there exist sets $U_0, U_1$ from the statement of the lemma. Recall that by the comment above, for every $j$, $c(x_jy_j)=c_{y_j}$.
Let $z$ be a unique vertex in $N^+_D(y_l)\cap V_1'$ if one exists, such that $c(y_lz)=c_{y_l}$.
Let $W_1\subseteq V_1'$ be obtained from $N^+_D(y_l)\setminus \{x_1,\dots, x_l, z\}$ by deleting at most $l$ vertices $x$ such that for some $1\leq j\leq l$, $c_x=c(x_jy_j).$
Let $W_0:=N^m(x_1)\setminus \{y_2, \dots, y_l\}.$ If all special edges of type 2 in $H[W_0,W_1]$ have some color $\alpha$ from the set $S$, then this $\alpha \in \{c_{x_1},\dots ,c_{x_l}\}$ because special edges of color $\alpha$ form a star. We let $U_1:= W_1$ and let $U_0$ be obtained from $W_0$ by deleting at most $l$ vertices $z$ such that $c_z\in \{c_{x_1},\dots ,c_{x_l}\}.$  Then there are no special edges of type 2 between $U_0$ and $U_1$ and from (\ref{ext-Nm}), $|U_0|\geq (1-2\beta)|V_0'|$, $|U_1|\geq (1-2\beta)|V_1'|$. Thus we may assume there is a special edge $x_{l+1}y_{l+1}$ in $H[W_1, W_0]$ of type two  such that $c(x_{l+1}y_{l+1})\notin S$. Since $y_{l+1}\in N^m(x_1)$, $c(y_{l+1}x_1)= c_{x_1}.$  Since $c_{y_l}=c(x_ly_l)$, from the definition of $W_1$, we have $c_{x_{l+1}}\neq c_{y_l}$. Therefore, if $c(y_lx_{l+1})\neq c_{x_{l+1}}$, then $x_{l+1}y_l$ is special and we would have $c(y_lx_{l+1})=c_{y_l}$ (or a rainbow $C_{2k}$), but then  $x_{l+1}$ is not in $N^+_D(y_l)$ by the definition of $D$. 
Thus $c(y_lx_{l+1})=c_{x_{l+1}}$ verifying the second condition. 
If we can continue the process $k$ times then $H$ has a rainbow $C_{2k}$, and otherwise the sets $U_0, U_1$ exist.

\end{proof}
 Recall that an edge $vw$ in $D$ is called special if $v\in V_i'$ and $w\in V'_{i-1}\cup V_{i-1}''$, and a vertex is called special if it is incident to a special edge. We have the following simple observation.
\begin{lemma}\label{ext-C12-C4-lem1}
\begin{itemize}
\item[(a)] If there is a special vertex, then there is an extendable $C_{12}.$
\item[(b)] If for some $i$ there is $x\in V_i'$ and $v\in V_{i-1}'\cup V_{i-1}''$ such that $xv$ is special and $|N^+_D(v)\cap (V_{i-2}'\cup V_{i-2}'')|\geq 3$, then there is an extendable $C_4$.
\end{itemize}
\end{lemma}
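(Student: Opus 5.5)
For part (a), the plan is to close a cycle through the special edge by going once around the blow-up structure. For part (b), the plan is to use one short detour through $V_{i-2}'\cup V_{i-2}''$. In both parts the cycle contains an edge $ux$ with $u\in N^m(x)$, so $c(ux)=c_x$ and the cycle is extendable. Throughout, let $xv$ be the special edge with $x\in V_i'$ and $v\in V_{i-1}'\cup V_{i-1}''$, and put $\alpha:=c(xv)$, so $\alpha\neq c_x$. Two facts drive every greedy choice. First, out-arcs of $D$ from a fixed vertex carry pairwise distinct colors, so forbidding $O(1)$ colors removes only $O(1)$ out-neighbours. Second, the relevant out-neighbourhoods are linear in size: every vertex of $V_j'$ has at least $(1-\eta)|V_{j+1}|-1$ out-neighbours in $V_{j+1}$ by (\ref{eq-new}), every vertex of $V_j''$ has at least $\gamma|V_{j+1}'|$ out-neighbours in $V_{j+1}'$, and $|N^m(y)|\geq (1-3\beta/2)|V_{j-1}'|$ for $y\in V_j'$ by (\ref{ext-Nm}).

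For (a), I will build a path of $11$ forward steps from $v$ back to $x$. Set $z_0:=v$. For $s=1,\dots,10$, choose $z_s\in N^+_D(z_{s-1})\cap V_{i-1+s}'$ (indices mod five) with $z_s$ not used before and $c(z_{s-1}z_s)\notin\{\alpha,c_x\}\cup\{c(z_{t-1}z_t): t<s\}$. Then $z_{10}\in V_{i-2}'$. As the last vertex, take $u:=z_{11}\in N^+_D(z_{10})\cap N^m(x)$, again unused and with $c(z_{10}u)$ avoiding all previously used colors. This is possible because $N^+_D(z_{10})\cap V_{i-1}'$ and $N^m(x)$ both cover all but an $O(\beta)$-fraction of $V_{i-1}'$. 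Then $x v z_1\cdots z_{10}u x$ has length $12$. Its colors are $\alpha$, eleven new colors, and $c(ux)=c_x\neq\alpha$, so it is rainbow, and the edge $ux$ with $u\in N^m(x)$ makes it extendable. The count $1+11=12$ works because $11\equiv 1\pmod 5$ is exactly the displacement from class $i-1$ to class $i$. The cycle is a genuine cycle since we avoid $O(1)$ used vertices at each step.

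For (b), let $w_1,w_2,w_3\in N^+_D(v)\cap(V_{i-2}'\cup V_{i-2}'')$. These three out-arcs of $v$ have distinct colors, so some $w=w_j$ has $c(vw)\notin\{\alpha,c_x\}$. Next, choose $y\in N^+_D(w)\cap N^m(x)$ with $y\neq v$ and $c(wy)\notin\{\alpha,c_x,c(vw)\}$. If $w\in V_{i-2}'$, then $N^+_D(w)$ covers almost all of $V_{i-1}'$. If $w\in V_{i-2}''$, then by Definition \ref{ext-def-V''} it covers at least $\gamma|V_{i-1}'|$ vertices of $V_{i-1}'$. Since $\beta\ll\gamma$ and $N^m(x)$ misses only a $3\beta/2$-fraction of $V_{i-1}'$, the intersection has size $\Omega(\gamma n)$, and discarding at most three bad colors and the vertex $v$ leaves a valid $y$. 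The cycle $xvwyx$ has four distinct vertices, since $w$ and $y,v$ lie in different classes and $y\neq v$. Its colors $\alpha,c(vw),c(wy),c_x$ are pairwise distinct, and it contains the edge $yx$ with $y\in N^m(x)$, so it is an extendable $C_4$.

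The main point to check carefully is the availability of $y$ in (b) when $w\in V_{i-2}''$. There the only guarantee is the $\gamma$-fraction out-neighbourhood, so the argument really uses the hierarchy $\beta\ll\gamma$ in (\ref{ext-eta-beta-gamma}). A second point is confirming that $c(ux)=c_x$ differs from every color used on the path. In (a) this is ensured because $c_x$ is forbidden at each earlier step and $\alpha\neq c_x$ by the definition of a special edge.
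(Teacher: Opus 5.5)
Your strategy is the paper's in both parts, and part (b) is correct as written: choosing $w\in N^+_D(v)\cap(V_{i-2}'\cup V_{i-2}'')$ with $c(vw)\notin\{\alpha,c_x\}$ and closing with $y\in N^+_D(w)\cap N^m(x)$ is exactly the paper's argument. Part (a), however, contains an off-by-one indexing error, and the construction fails as written.

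With $z_s\in V'_{i-1+s}$, the vertex $z_{10}$ lies in $V'_{i+9}=V'_{i-1}$, not in $V'_{i-2}$ as you claim. Your final step therefore asks for $u\in N^+_D(z_{10})\cap N^m(x)$. This set is contained in $N_H(z_{10})\cap V'_{i-1}$, which is empty by Fact~\ref{ext-fact2}(a), so no such $u$ exists. Your own colour count shows the same problem. You list $\alpha$, eleven new colours and $c_x$, which is thirteen edges, so $xvz_1\cdots z_{10}ux$ would be a $13$-cycle rather than a $12$-cycle. Such a cycle cannot close up in this structure: it would have one backward step and twelve forward steps, and $12-1\not\equiv 0 \pmod 5$.

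The repair is immediate and agrees with your stated plan of $11$ forward steps from $v$ to $x$:
\begin{enumerate}
\item Choose only $z_1,\dots,z_9$, so that $z_9\in V'_{i+8}=V'_{i-2}$.
\item Take $u:=z_{10}\in N^+_D(z_9)\cap N^m(x)$, unused and with a colour not yet used. Such a $u$ exists because both $N^+_D(z_9)$ and $N^m(x)$ cover all but an $O(\beta)$-fraction of $V'_{i-1}$.
\end{enumerate}
The cycle $xvz_1\cdots z_9ux$ then has $12$ edges, coloured $\alpha$, ten new colours, and $c_x$. It is rainbow, and it is extendable because of the edge $ux$ with $u\in N^m(x)$. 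It is precisely the paper's cycle $xuy_0\cdots y_9x$, with your $z_1,\dots,z_9,u$ playing the role of the paper's $y_0,\dots,y_9$.
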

\begin{proof} For (a) suppose $u\in V_{i-1}'\cup V_{i-1}''$ is special. Then $xu$ is special for some $x\in V_i'$. Let $y_0\in V_i'\cap N^+_D(u)$ be such that $xuy_0$ is a rainbow path that does not contain $c_x$. Add, one by one, nine different vertices $y_1,\dots, y_9$ so that for every $j$, $y_{j+1}\in N^+_D(y_j)\cap V'_{i+j}$, and $y_9\in N^m(x)$. Then $xuy_0\dots y_9x$ is an extendable $C_{12}$.

For (b), let $w\in N^+_D(v)\cap (V_{i-2}'\cup V_{i-2}'')$ be such that $c(vw)\notin \{c_x, c(vx)\}$. Then $|N^+_D(w)\cap V_{i-1}'|\geq \gamma |V_{i-1}'|$. Now let $z\in N^+_D(w)\cap N^m(x)$ be such that $c(wz)\notin \{c_x,c(xv), c(vw)\}$. Then $xvwzx$ is an extendable $C_4$.
 \end{proof}
\begin{lemma}\label{ext-C16-C8-lem1}
Let $x_1, x_2 \in V_1'$ and $v_1, v_2\in V_0'\cup V_0''$ be such that $x_1v_1, x_2v_2$ is a matching of size two, and  $c(x_1v_1), c(x_2v_2), c_{x_1}, c_{x_2}$ are four different colors.
\begin{itemize}
\item[(a)]  If $|N^+_D(v_1)\cap (V_{4}'\cup V_4'')|\geq 5$,  then there is an extendable $C_{16}$.
    \item[(b)] If for $j=1,2$, $|N^+_D(v_j)\cap (V_{4}'\cup V_4'')|\geq 7$,  then there is an extendable $C_8$.
    
\end{itemize}
\end{lemma}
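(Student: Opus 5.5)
The plan is to imitate the proofs of Lemma \ref{ext-C12-C4-lem1} and Lemma \ref{ext-lem-S(l)}. We build the cycle greedily as a closed walk that wraps around the sets $V_0',\dots,V_4'$. The two special edges $x_1v_1,x_2v_2$ serve as ``backward'' steps, and the out-neighbors of $v_j$ in $V_4'\cup V_4''$ serve as the second backward step. At every stage we maintain a forbidden color set $U$ and a used vertex set $W$, both of bounded size. We then use the fact that the relevant out-neighborhoods in $D$ have linear size and carry distinct colors, so that a suitable next vertex can always be found. The resulting cycle will contain an edge $ux$ with $u\in N^m(x)$, hence it is extendable.

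For (a), consider the walk $x_1 v_1 w$, where $w\in N^+_D(v_1)\cap(V_4'\cup V_4'')$. Since there are at least $5$ such $w$ and the colors $c(v_1w)$ are pairwise distinct, we can pick $w$ with $c(v_1w)\notin\{c_{x_1},c_{x_2},c(x_1v_1),c(x_2v_2)\}$. By Definition \ref{ext-def-V''} (or (\ref{eq-new}) when $w\in V_4'$), $w$ has at least $\gamma|V_0'|$ out-neighbors in $V_0'$.

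The net displacement along the walk so far is $-2$. To get a cycle we need a forward path of some length $L$ with $L\equiv 2 \pmod 5$, and the cycle length is then $2+L+1$. Taking $L=12$ gives length $15$, and $L=13$ is not $\equiv 2$. So a single backward pair does not directly give length $16$, and the plan must be adjusted. The fix is to use both special edges. Going $x_1\to v_1\to w\to$ forward, then entering $x_2$ backward via $v_2$, and so on, mixes forward and backward steps: each special edge contributes $-1$, each out-edge of $v_1$ into $V_4$ contributes $-1$, and every other edge contributes $+1$.

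For a cycle of length $16$ we need $f-b\equiv 0\pmod 5$ with $f+b=16$, which forces $b=3$ and $f=13$, giving $f-b=10$. The three backward steps are then $x_1v_1$, $v_1w$, and the second special edge $x_2v_2$, traversed as $v_2\to x_2$. Traversed in that direction, $v_2x_2$ goes from $V_0$ to $V_1$, so it is forward, and that breaks the count. So instead we traverse $x_2v_2$ as $x_2\to v_2$, which requires arriving at $x_2$ from $V_0'$ via some $u\in N^m(x_2)$ and leaving $v_2$ forward to $V_1'$. The cycle is then: $x_1\to v_1\to w\to$ (forward $a$ steps) $\to u\to x_2\to v_2\to$ (forward $b'$ steps) $\to u'\to x_1$.

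We now check positions and the length. The walk $x_1v_1w$ ends in $V_4$. Going forward to $u\in V_0$ takes $a\equiv 1$ steps, and then $u\to x_2$ adds one. The special edge $x_2\to v_2$ lands in $V_0$. Going forward to $u'\in V_0$ takes $b'\equiv 0$ steps, and then $u'\to x_1$ adds one. The total length is $2+a+1+1+b'+1$. With $a=1$ and $b'=10$ this gives $16$. Here $v_2$ needs forward out-neighbors in $V_1'$, which holds since $v_2\in V_0'\cup V_0''$ by Definition \ref{ext-def-V''}. The edges $u\to x_2$ and $u'\to x_1$, with $u\in N^m(x_2)$ and $u'\in N^m(x_1)$, carry the colors $c_{x_2}$ and $c_{x_1}$. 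This is why the hypothesis requires $c_{x_1},c_{x_2},c(x_1v_1),c(x_2v_2)$ to be four distinct colors, and it also makes the cycle extendable.

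For (b), we use both $v_1$ and $v_2$ backward. The cycle is $x_1\to v_1\to w_1\to$ (forward) $\to u\to x_2\to v_2\to w_2\to$ (forward) $\to u'\to x_1$. Each $w_j$ lies in $V_4$. The forward runs go from $V_4$ to $u\in V_0$, which takes $1$ step, and this choice gives total length $8$. The hypothesis of $7$ choices for each $w_j$ is needed to avoid up to $6$ forbidden colors: the four given colors, $c(v_1w_1)$, and so on. When a $w_j$ lies in $V_4''$ we must also make sure $w_j\ne w_{3-j}$ and $w_j\notin\{v_1,v_2\}$.

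The main obstacle is the final closing step. We must choose $u$ in the intersection $N^+_D(w)\cap N^m(x_2)$, which has size at least $\gamma|V_0'|-2\beta|V_0'|$, and we also need $c(wu)$ to avoid $U$. This works because the out-colors at $w$ are distinct and $|U|$ is bounded. In (b) the run has length one, so there is no slack beyond this intersection argument.
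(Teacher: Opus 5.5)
Your final constructions are exactly the paper's. For (a) it is the cycle $x_1v_1wux_2v_2y_1\cdots u'x_1$, with a forward run of length $10$ from $v_2$ to $u'\in N^m(x_1)$, and for (b) the cycle $x_1v_1w_1ux_2v_2w_2u'x_1$, both built greedily with the same color-avoidance and made extendable by the edge $u'x_1$, so the approach is essentially the paper's.

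One bookkeeping fix in (b): choose $w_2$ immediately after $w_1$ and before $u$, as the paper does, so that $w_2$ only has to avoid five colors and the vertex $w_1$. If $u$ is chosen first, the six forbidden colors together with $w_2\neq w_1$ can rule out all seven out-neighbors of $v_2$.
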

\begin{proof}
Let $U:= \{c(x_1v_1), c(x_2v_2), c_{x_1}, c_{x_2}\}$. Let $z_1\in V_4'\cup V_4''$ be such that $v_1z_1\in H$ and $c(v_1z_1)\notin U$. Add $c(v_1z_1)$ to $U$ and let $w_1\in N^m(x_2)\cap N_D^+(z_1)$ be such that $c(z_1w_1)\notin U$. Add $c(x_1w_1)$ to $U$ and note that $x_1v_1z_1w_1x_2v_2$ is a rainbow path of length five that does not contain $c_{x_1}$. 
For (a), let $y_1\in N^+_D(v_2)\cap V_1'$ be such that $c(v_2y_1)\notin U$. This gives a rainbow path of length six that starts at $x_1$, ends in $V_1'$, and does not contain $c_{x_1}$. Now add nine additional vertices, one by one as in the previous proof, with the first one in $N^+_D(y_1)\cap V_2'$, and the last one in $N^m(x_1)\cap V_0'$. This gives an extendable $C_{16}.$
For (b), after selecting $z_1$, since $v_2$ has neighbors of at least seven  colors in  $V_{4}'\cup V_4''$, we can select $z_2\neq z_1$ such that $v_2z_2\in E(D)$ and $c(v_2z_2)\notin U$. Add $c(v_2z_2)$ to $U$ and find the path $x_1v_1z_1w_1x_2v_2$ as above.
 Now select $w_2\in N^+_D(z_2)\cap N^m(x_1)$ so that $c(w_2z_2)\notin U$ and $w_2\notin \{v_1, v_2, w_1\}$. This gives an extendable cycle of length $8$.
\end{proof}
Recall that $\delta^+(D)>n/5$ and from (\ref{ext-eta-beta-gamma}),  $\eta \ll \beta \ll \gamma$. Although we have a lot of information about vertices in $V_i'$, we need to prove stronger properties for $v\in V_i''$.
\begin{lemma}\label{ext-lem-beta-gamma}
 Let $i\in \{0, \dots, 4\}$ and let $v\in V_i'\cup V_i''$ be such that $|N^+_D(v)\cap V_{i-1}'|\leq 7$.
 \begin{itemize}
     \item[(a)]  If $N_H(v) \cap V_{i-1}'\neq \emptyset$,
     then $|N^+_D(v) \cap V_{i+1}'|\geq (1-{10}\beta)|V_{i+1}'|$.
     \item[(b)] $|N^+_D(v)
     \cap V_{i+1}'|\geq (1-2\gamma)|V_{i+1}'|$. 
 \end{itemize}
 \end{lemma}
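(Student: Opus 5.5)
The approach is a degree count. Since $d^+_D(v)=d^c(v)>n/5$, the out-neighbourhood of $v$ has more than $n/5$ vertices. We locate where these vertices can lie and show that almost all of them must be in $V_{i+1}'$.

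First, fix the possible locations of $N_H(v)$. By Fact \ref{ext-fact2}(a), $v$ has no neighbours in $V_i'\cup V_{i+2}'$. Whenever $v$ has at least $\gamma|V_{i+1}'|$ out-neighbours in $V_{i+1}'$ (true for $v\in V_i''$ by definition, and for $v\in V_i'$ by (\ref{eq-new})), the triangle-freeness argument of Fact \ref{ext-fact2} also excludes neighbours in $V_{i+3}'\cup V_{i+3}''$ and $V_i''$ that would share an out-neighbour in $V_{i+1}'$. I will redo this carefully, using that every vertex of $V_{i+3}'$ has at least $(1-2\beta)|V_{i+2}'|$ in-neighbours via $N^m$ and out-neighbours in $V_{i+4}'$.

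So the out-neighbours of $v$ lie in $V_{i+1}'$, in the at most $7$ vertices of $N^+_D(v)\cap V_{i-1}'$, and in the exceptional sets $V_j''$. Using (\ref{eq-V_i''}), $|V_j''|\le 6\beta|V_j'|$, and $|V_j'|\le(1+\eta)n/5$, this gives
$$n/5< d^+_D(v)\le |N^+_D(v)\cap V_{i+1}'| + 7 + \sum_{j\neq i+1}|N^+_D(v)\cap V_j''| + (\text{contribution of }V_{i+4}'),$$
where $V_{i+4}'=V_{i-1}'$ already contributes at most $7$. Since $|V_{i+1}'|\ge (1-\beta-\eta)n/5$, we get $|N^+_D(v)\cap V_{i+1}'|\ge n/5-7-O(\beta n)\ge (1-O(\beta))|V_{i+1}'|$. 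The only remaining loose source is out-neighbours of $v$ in $V_{i+3}'$. For part (b) I will control these as follows: if $v$ had at least $\gamma|V'|$ out-neighbours in both $V_{i+1}'$ and $V_{i+3}'$, a common-neighbour argument would give a triangle. The crude bound then yields $1-2\gamma$.

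Part (a) needs the sharper $10\beta$. Take $u\in N_H(v)\cap V_{i-1}'$. Then $u$ has out-neighbours in almost all of $V_i'$ and in-neighbours $N^m(u)$ in almost all of $V_{i-2}'=V_{i+3}'$. Any neighbour $w$ of $v$ in $V_{i+3}'\cup V_{i+3}''$ would have many neighbours in $V_{i+4}'=V_{i-1}'$, and since $v$ is adjacent to $u$, I expect to derive a contradiction with triangle-freeness or else bound the total by $O(\beta n)$. To do so I will use (\ref{ext-Nm}) on $u$ to show that $v$ has no neighbours in $V_{i+3}'$ except those counted among the $\beta$-exceptions. Summing the deficits $\beta$ (from $V_{i+1}'$ versus $n/5$), $6\beta$ (from the $V''$ sets) and $\eta$ gives at most $10\beta|V_{i+1}'|$ missing.

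The main obstacle is excluding, in (a), a large out-neighbourhood of $v$ inside $V_{i+3}'$ (equivalently $V_{i-2}'$) with only $\beta$-losses. I will first attempt the triangle argument through $u$ and $N^m(u)$. If that fails, I will fall back on the colour structure: edges from $v$ to $V_{i-2}'$ would create special-type configurations, and Lemma \ref{ext-C12-C4-lem1} would then give extendable cycles.
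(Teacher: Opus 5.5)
\textbf{Part (a).} Your argument is essentially the paper's. It is a degree count from $d^+_D(v)>n/5$: Fact~\ref{ext-fact2}(a) removes $V_i'\cup V_{i+2}'$, the hypothesis caps $V_{i-1}'$ at $7$, and for $V_{i+3}'$ you use a neighbour $u\in V_{i-1}'$ of $v$. By (\ref{ext-Nm}), $|N^m(u)|\ge(1-3\beta/2)|V_{i+3}'|$, so $v$ has fewer than $2\beta|V_{i+3}'|$ neighbours there, or there is a triangle. No fallback to colour structure is needed.

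Two bookkeeping points. Bound all exceptional out-neighbours at once by $|\bigcup_j V_j''|\le(\beta+\eta)n$. Summing the per-set bound $6\beta|V_j'|$ over $j$ costs roughly $24\beta$--$30\beta$ and misses $10\beta$. Also, your displayed inequality omits $N^+_D(v)\cap V_{i+1}''$, which the union bound covers.

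\textbf{Part (b) has a genuine gap.} You claim that $\gamma$-fractions of out-neighbours in both $V_{i+1}'$ and $V_{i+3}'$ force a triangle via a common-neighbour argument. This is false. By Fact~\ref{ext-fact2}(a) at index $i+1$ there are no edges at all between $V_{i+1}'$ and $V_{i+3}'$. A common neighbour $z\in V_{i+2}'$ of $x\in V_{i+1}'$ and $y\in V_{i+3}'$ only closes a $C_4$ $vxzy$. A vertex joined to large parts of $V_{i+1}'$ and $V_{i+3}'$ is exactly a vertex that looks like a member of $V_{i+2}$ in the blown-up $C_5$, which is triangle-free. Your opening remark about excluding neighbours in $V_{i+3}'$ via shared out-neighbours in $V_{i+1}'$ is vacuous for the same reason.

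Here is what is missing. If $v$ has an $H$-neighbour in $V_{i-1}'$, then (b) follows from (a), since $10\beta\le 2\gamma$. Otherwise $v\in V_i''$, because every $v\in V_i'$ has $\emptyset\neq N^m(v)\subseteq V_{i-1}'$.

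In that case the bound comes from the ``if possible'' clause of Definition~\ref{ext-def-V''}, not from triangle-freeness. Since $|N^+_D(v)\cap V_{i-1}'|\le 7<\gamma|V_{i-1}'|$, the optional condition fails for index $i$. So $v$ was placed in $V_i''$ only because no index met both conditions.

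Now suppose $|N^+_D(v)\cap V_{i+3}'|\ge\gamma|V_{i+3}'|$. Together with $|N^+_D(v)\cap V_{i+1}'|\ge\gamma|V_{i+1}'|$ (which holds because $v\in V_i''$), index $i+2$ would satisfy both conditions. Then $v$ would have been put in $V_{i+2}''$ instead.

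Hence $|N^+_D(v)\cap V_{i+3}'|<\gamma|V_{i+3}'|$. The same degree count then gives $|N^+_D(v)\cap V_{i+1}'|\ge(1-2\gamma)|V_{i+1}'|$.
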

 \begin{proof}
  We have $|N^+_D(v)\cap V_{i-1}'|\leq 7$, and, by Fact \ref{ext-fact2}(a), $v$ has no neighbors in $V_i'\cup V_{i+2}'$. Suppose first that $v$ has a neighbor $w$ in $V_{i-1}'.$ If $v$ has at least $2\beta |V_{i+3}'|$ neighbors in $V_{i+3}'$, then by (\ref{ext-Nm}), $v$ and $w$ have a common neighbor in  $V_{i+3}'$. Therefore, we have 
  $$|N^+_D(v)\cap V_{i+1}'| > n/5- 2\beta |V_{i+3}'|-7 - \abs*{\bigcup_{i=0}^4 V_i''} > (1- 10\beta)|V_{i+1}'|,$$
  since $\abs*{\bigcup_{i=0}^4 V_i''}\leq (\beta +\eta) n.$
 
 If $v$ has no neighbors in $V_{i-1}'$, then $|N^+_D(v)\cap V_{i+3}'|<\gamma |V_{i+3}'|$, as otherwise in view of Definition \ref{ext-def-V''} we would add $v$ to $V_{i+2}''$. Thus $$|N^+_D(v)\cap V_{i+1}'|>n/5- \gamma |V_{i+3}'|-7- \abs*{\bigcup_{i=0}^4 V_i''} > (1- 2\gamma)|V_{i+1}'|.$$ 
 \end{proof}
 \subsection{Rainbow cycles of length $4k$}\label{ext-subsec-2}
 We start with the following definition.
 \begin{defn}
 For a vertex $v\in V_i'$ and $l=1,2$ let $d^s_l(v)$ be the number number of colors on special edges $vw$ of type $l$   {such that $w\in V_{i-1}'\cup V_{i-1}''$}. \end{defn}
 Recall that special edges of type 1 form a matching in $E_H(V_{i-1}', V_i')$. As a result, for every $v\in V_{i}'$, $d^s_1(v)\leq 1$.  One of the main ideas used to address the extremal case is to look at the maximum special degree of type two and argue that either there exist extendable cycles or we get a contradiction with maximality. Specifically, let
 \begin{equation}\label{ext-eq-Mi}
 M_i:= \max_{X\subseteq V_i', |X|\geq |V_i'|/10}\min_{v\in X}d^s_2(v),
 \end{equation}
 and let
\begin{equation}\label{ext-eq-M}M:=\max_iM_i.\end{equation}

We  suppose that for some $l\in \{1, \dots, 4\}$ there is no extendable cycle of length $4l$. Recall from Corollary \ref{ext-cor-S(l)}, for every $i$, $|S_i(l)|< {36}$. 

The rest of the argument will use a tighter bound for the minimum color degree, and from now, we will assume $$\delta^+(D)=\delta^c(H)\geq \frac{n+6}{5}.$$
\begin{lemma}\label{ext-lem-size}
 For every $i=0, \dots, 4$, $|V_i'\cup V_i''|\leq n/5+ 4(M+4).$
\end{lemma}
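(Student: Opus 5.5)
We bound $|V_i'\cup V_i''|$ by a color-degree count at a vertex of the \emph{preceding} class, exploiting that such a vertex sees few colors outside $V_i'\cup V_i''$. Fix $i$ and work inside $V_{i-1}'$. By the definition of $M_{i}$ (the analogous quantity for the class whose special edges go back into $V_{i-1}'\cup V_{i-1}''$) and Lemma \ref{ext-lem-smallset}, we pick a vertex $u\in V_{i-1}'$ that is typical in four respects: it lies in the set $U_{i-1}$ of Lemma \ref{ext-lem-smallset}; it is not in $S_{i-1}(4)$; it has few special edges of type 2; and it has at most one special edge of type 1. This is possible because $|S_{i-1}(4)|<36$ by Corollary \ref{ext-cor-S(l)} and $|U_{i-1}|\geq(1-2\beta)|V_{i-1}'|$.

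Next we count the colors at $u$ using Fact \ref{ext-fact2}(b). We have $N_H(u)\subseteq (V_{i-2}'\cup V_{i-2}'')\cup(V_i'\cup V_i'')\cup V_{i+1}''$, so the colors at $u$ split into three groups.
\begin{itemize}
\item Edges to $V_{i-2}'$ that lie in $N^m(u)$ all carry the single color $c_u$.
\item The remaining edges to $V_{i-2}'\cup V_{i-2}''$ are special edges out of $u$. They contribute at most $d^s_1(u)+d^s_2(u)\leq 1+M$ further colors. Here we use that a special edge of type 2 at $u$ has a color different from $c_u$, and that, by the remark after Lemma \ref{ext-lem2}, such edges at a vertex of $V_{i-2}'$ carry that vertex's own color.
\item Edges into $V_{i+1}''$ use at most $3$ colors other than $c_u$, since $u\notin S_{i-1}(4)$.
\end{itemize}
Hence $d^c(u)\leq |N_H(u)\cap(V_i'\cup V_i'')|+M+5$. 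Comparing with $d^c(u)\geq (n+6)/5$ gives only a lower bound on $|N(u)\cap(V_i'\cup V_i'')|$. So for the upper bound we reverse the roles.

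For the upper bound we count the colors at vertices of $V_{i+1}'$ and at vertices of the other classes, and sum. Since $\{V_j'\cup V_j''\}$ partitions $V(H)$, it suffices to show $|V_j'\cup V_j''|\geq n/5-4(M+4)/4$ roughly for each $j\neq i$, and then subtract:
\[
|V_i'\cup V_i''| = n-\sum_{j\neq i}|V_j'\cup V_j''| \leq n-4\Bigl(\frac{n+6}{5}-(M+5)\Bigr)\leq \frac n5+4(M+4).
\]
For the lower bound on each class $j$, we take $u\in V_{j-1}'$ typical as above. The out-neighbours of $u$ in $D$ have distinct colors, and there are at least $(n+6)/5$ of them. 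All but at most $M+5$ of these colors are realized by edges into $V_j'\cup V_j''$, because the non-$N^m$ edges backward are special and the $V_{j+1}''$-edges are few in color. Each such color needs a distinct neighbour, so $|V_j'\cup V_j''|\geq (n+6)/5-(M+5)$. Plugging this in gives $n/5-24/5+4M+20\le n/5+4(M+4)$, as needed.

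The main obstacle is the second group: bounding the colors on backward edges from $u$ by $M+1$. The quantity $M_i$ only controls a max-min over sets of size $|V_i'|/10$, so the definition guarantees that fewer than $|V_{i-1}'|/10$ vertices have $d^s_2>M$. We must choose $u$ outside this exceptional set and also outside the sets excluded by Lemma \ref{ext-lem-smallset} and Corollary \ref{ext-cor-S(l)}. Counting sizes shows such a $u$ exists. We also need to check that the back-edges into $V_{j-2}''$ are indeed special edges, so that they are counted in $d^s$; this holds by definition.
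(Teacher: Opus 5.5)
Your proof is correct and is essentially the paper's argument. The paper argues by contradiction: if some class exceeds the bound, then some class $j$ has size below $n/5-(M+4)$, which forces $d^s_2(v)>M$ for every $v\in V_{j-1}'\setminus S_{j-1}(4)$ and contradicts the definition of $M$. You run the same color count directly at a vertex $u$ with $d^s_2(u)\le M$ outside $S_{j-1}(4)$, which gives a lower bound on each of the other four classes, and then subtract from $n$.

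Two small remarks. The appeal to Lemma \ref{ext-lem-smallset} is unnecessary. The count is only rigorous when phrased, as in your second paragraph, through the out-neighbourhood $N^+_D(u)$ rather than through $H$-edges.
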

\begin{proof}
Otherwise, there is $j$ such that $|V_j'\cup V_j''|< n/5 - (M+4)$. Let $v\in V_{j-1}'\setminus S_{j-1}(4)$.  We have $|N^+_D(v)\cap V_{j+1}''|\leq  3$ because $vw\in E(D)$ such that $c(vw)=c_v$ has $w\in V_{j-2}'$. There is exactly one vertex $w\in N^+_D(v)\cap V_{j-2}'$ such that $c(vw)=c_v$, $d^s_1(v)\leq 1$. By Fact \ref{ext-fact2}(b), 
$$d^s_2(v)> ((n+6)/5-5) - (n/5 -(M+4))>M.$$  Since $|V_{j-1}'\setminus S_{j-1}(4)|\geq |V_{j-1}'|/10$ this  contradicts the definition of $M$.
\end{proof}

\begin{lemma}\label{ext-set-S}
 Let $S:=\{v\in V_i'' : |N^-_D(v)\cap V_{i-1}'| \leq 2|V_{i-1}'|/3\}$. Then $|S|\leq 20M+80.$
\end{lemma}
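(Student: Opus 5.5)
Fix $i$ and let $S$ be as in the statement. The plan is to show that every $v\in S$ has many out-neighbors in $V_{i+1}'\cup V_{i+1}''$ that are "unaccounted for" by the $V'$-structure. Double counting then forces some $V_{i+1}'\cup V_{i+1}''$ (or a neighboring class) to exceed the size bound of Lemma \ref{ext-lem-size} unless $|S|$ is small.

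\textbf{Step 1 (out-neighborhoods of vertices in $S$).} Take $v\in S\subseteq V_i''$. Since $v\notin\bigcup_j V_j'$, Definition \ref{ext-def-V''} places $v$ in $V_i''$ because $|N^+_D(v)\cap V_{i+1}'|\geq\gamma|V_{i+1}'|$. By Fact \ref{ext-fact2}(a), $v$ has no neighbors in $V_i'\cup V_{i+2}'$. Suppose $v$ has at least $\gamma|V_{i-1}'|$ out-neighbors in $V_{i-1}'$. Then any such neighbor $w\in V_{i-1}'$ together with $v$ would share common neighbors in $V_{i+3}'$ or $V_{i+1}'$, producing a triangle. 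So I expect to be able to apply Lemma \ref{ext-lem-beta-gamma}. Either $|N^+_D(v)\cap V_{i-1}'|\le 7$, in which case Lemma \ref{ext-lem-beta-gamma}(b) gives $|N^+_D(v)\cap V_{i+1}'|\ge(1-2\gamma)|V_{i+1}'|$; or $v$ has many neighbors in $V_{i-1}'$, in which case triangle-freeness through $V_{i+1}'$ (using (\ref{ext-Nm}) for vertices of $V_i'$) handles it. Either way $v$ has essentially all of $V_{i+1}'$ as neighbors.

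\textbf{Step 2 (using $S$ to contradict the maximality of $M$ or the size bound).} Each $x\in V_{i-1}'$ is adjacent to $v\in S$ only if $xv$ is an edge into $V_i''$. By Fact \ref{ext-fact2}(b) such an edge is an extra edge of $x$. Because $v\in S$ misses at least $|V_{i-1}'|/3$ vertices of $V_{i-1}'$ as in-neighbors, I would count for each $v\in S$ the vertices $x\in V_{i-1}'$ whose color degree must be realized elsewhere. A cleaner route is to count from the other side. The vertices $u\in V_{i+1}'$ have in-degree coming from $V_i'\cup V_i''$, and their $D$-out-degree satisfies $d^+(u)\ge(n+6)/5$. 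With $|V_{i+2}'\cup V_{i+2}''|\le n/5+4(M+4)$ by Lemma \ref{ext-lem-size}, each $u$ has few out-neighbors outside $V_{i+2}$. Fact \ref{ext-fact2}(b) restricts the rest to special edges into $V_i'\cup V_i''$ and edges to $V_{i+3}''$. Here the key observation is that $S$ lies in $N_H(u)$ for most $u\in V_{i+1}'$ (Step 1). Edge $uv$ with $v\in S$ is oriented either $u\to v$, which makes it special (or $c(uv)=c_u$ a type-1-like coincidence), or $v\to u$, which colors it in $v$'s palette. For $v\in S$, the in-neighbors of $v$ in $V_{i-1}'$ number at most $2|V_{i-1}'|/3$. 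Hence the color degree $(n+6)/5$ of $v$ must be met mostly by out-edges into $V_{i+1}'$ and $V_{i-1}'$, and these colors are distinct.

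\textbf{Step 3 (double count).} I would show that if $|S|>20M+80$, then at least $|V_{i+1}'|/10$ vertices $u\in V_{i+1}'$ each receive edges from more than $M$ vertices of $S$ with pairwise distinct colors different from $c_u$. These edges are special of type 2 for $u$ (or land in $S_{i+1}(4)$-like exceptions, bounded by Corollary \ref{ext-cor-S(l)}). Then $d^s_2(u)>M$ on a set of size at least $|V_{i+1}'|/10$, contradicting (\ref{ext-eq-M}). The distinctness of colors comes from the fact that the out-edges of each $v$ have distinct colors, combined with the bound of at most one type-1 edge per $u$. Alternatively, the overflow of $S$ could contradict Lemma \ref{ext-lem-size}, which is where the constant $20M+80=5\cdot 4(M+4)$ suggests the intended argument: summing the excess over five classes.

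\textbf{Main obstacle.} The hardest part is Step 3, namely guaranteeing that the edges from $S$ to a typical $u\in V_{i+1}'$ carry distinct colors. The orientation in $D$ goes from $v$, which gives distinctness at $v$, not at $u$. I would first try to exploit the no-monochromatic-$P_3$ property and Fact \ref{ext-fact4.5}(a). If four vertices $v\in S$ send to $u$ edges of distinct colors different from their $c$'s, a connector arises; otherwise colors repeat only boundedly. I would fall back on the size-count via Lemma \ref{ext-lem-size} if this fails.
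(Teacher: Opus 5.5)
Your proposal has a genuine gap. The plan you commit to (Steps 1 and 3) looks for the contradiction with (\ref{ext-eq-M}) at vertices $u\in V_{i+1}'$, and nothing there can produce it. The quantity $d^s_2(u)$ counts colors on \emph{out}-edges $u\to w$ of $D$ with $w\in V_i'\cup V_i''$. The edges Step~1 would give you are $v\to u$ with $v\in S$, and these are not special edges of $u$. Even if $c(uv)$ is a new color at $u$, the unique $D$-out-edge of $u$ of that color may go into $V_{i+2}$. So the obstacle you flag, distinctness of colors at $u$, is beside the point: out-edges of $u$ automatically have distinct colors, and received edges do not count at all.

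More fundamentally, Lemma~\ref{ext-lem-size} gives only an \emph{upper} bound $|V_{i+2}'\cup V_{i+2}''|\le n/5+4(M+4)$. This exceeds $(n+6)/5$, so it never forces $u$ to have out-neighbors outside $V_{i+2}$. Special edges are forced only by a \emph{deficit} in the forward class, and your setup creates none at $u$.

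The supporting steps also fail. Step~1 uses Lemma~\ref{ext-lem-beta-gamma}, which needs $|N^+_D(v)\cap V_{i-1}'|\le 7$; this need not hold for $v\in S$. An edge between $v\in V_i''$ and $w\in V_{i-1}'$ is an ordinary forward edge, so no triangle arises. Your fallback also fails: $S\subseteq V_i''$ is already counted inside $V_i'\cup V_i''$, so a large $S$ causes no overflow of any class.

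The idea you mention and then drop at the start of Step~2 is the one that works: the deficit lives at $V_{i-1}'$.
\begin{itemize}
\item Every $v\in S$ has at most $\frac23|V_{i-1}'|$ in-neighbors in $V_{i-1}'$, so $|E_D(V_{i-1}',S)|\le\frac23|V_{i-1}'|\,|S|$. Hence the set $W$ of $x\in V_{i-1}'$ with $|N^+_D(x)\cap S|<\frac34|S|$ satisfies $|W|\ge|V_{i-1}'|/9$.
\item If $|S|>20M+80$, then $|S|/4>5M+20$. For $x\in W$, Lemma~\ref{ext-lem-size} then gives $|N^+_D(x)\cap(V_i'\cup V_i'')|\le |V_i'\cup V_i''|-|S|/4<n/5-M-4$.
\item By Fact~\ref{ext-fact2}(b), the remaining out-neighbors of $x$ lie in $V_{i-2}'\cup V_{i-2}''$ or in $V_{i+1}''$. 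Discount the single edge of color $c_x$ and at most one type-1 color. For $x\notin S_{i-1}(4)$ (fewer than $36$ exceptions by Corollary~\ref{ext-cor-S(l)}), also discount at most three edges into $V_{i+1}''$.
\item This gives $d^s_2(x)>(n+6)/5-5-(n/5-M-4)>M$ on $W\setminus S_{i-1}(4)$. That set has size at least $|V_{i-1}'|/10$, contradicting the definition of $M$.
\end{itemize}
This also explains the constant. We have $20M+80=4(5M+20)$, where the factor $4$ comes from the $\frac34$ threshold. In turn $5M+20=4(M+4)+(M+4)$: the first term is the slack allowed by Lemma~\ref{ext-lem-size}, and the second is what forces $d^s_2>M$. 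It does not come from summing an excess over five classes.
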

\begin{proof}
Suppose $|S|> 20M+80$. We have $|E_D(V_{i-1}', S)|\leq 2|V_{i-1}'|\cdot  |S|/3$. Let $W:= \{v\in V_{i-1}' : |N^+_D(v)\cap S|<3|S|/4\}$. Then 
$$|E_D(V_{i-1}', S)|\geq (|V_{i-1}'|-|W|)\cdot \frac{3|S|}{4}, $$
and so $|W|\geq |V_{i-1}'|/9$. For every $v\in W$, by Lemma \ref{ext-lem-size}, 
$$|N^+_D(v)\cap (V_i'\cup V_i'')|\leq |V_{i}'\cup V_{i}''|- |S|/4< (n/5+4(M+4) -(5M+20) = n/5- M- 4.$$
Thus, for every $v\in W\setminus S_{i-1}(l),$ $d^s_2(v)> ((n+6)/5-5)-(n/5-M-4)>M$. Since $|W\setminus S_{i-1}(l)|\geq |V_{i-1}'|/10$, this contradicts the definition of $M$.
\end{proof}
We also have the following observation.
\begin{lemma}\label{ext-lem-Mdegs} Let $v\in V_{i}'\cup V_i''$ be such that $|N^+_D(v)\cap V_{i-1}'|\leq 7$ and $|N_H(v)\cap V_{i-1}'|\geq |V_{i-1}'|/2$. Then
\begin{itemize}
    \item[(a)] $|N^+_D(v)\cap (V_{i}'\cup V_i'')|=0$,
    \item[(b)] $|N^+_D(v)\cap (V_{i+2}'\cup V_{i+2}'')|\leq 20M+80$,
    \item[(c)] $|N^+_D(v)\cap (V_{i+3}'\cup V_{i+3}'')|\leq 20M+80$.
\end{itemize}
\end{lemma}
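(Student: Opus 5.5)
Throughout, fix $v\in V_i'\cup V_i''$ and a set $Y\subseteq N_H(v)\cap V_{i-1}'$ with $|Y|\geq |V_{i-1}'|/2$. Every part of the lemma comes from one observation: since $H$ is triangle-free, no neighbour of $v$ is adjacent in $H$ to a vertex of $Y$. Each part then follows by showing that a candidate out-neighbour of $v$ would have a neighbour in $Y$, or would force many such neighbours.

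\emph{Part (a).} By Fact \ref{ext-fact2}(a), $v$ has no neighbours in $V_i'$. Now suppose $w\in V_i''$ with $vw\in E(D)$. By Definition \ref{ext-def-V''}, $|N^+_D(w)\cap V_{i-1}'|\geq \gamma|V_{i-1}'|$ whenever possible. Even without that, I would argue as follows. By Lemma \ref{ext-set-S}, all but at most $20M+80$ vertices of $V_i''$ have $|N^-_D(w)\cap V_{i-1}'|>2|V_{i-1}'|/3$. Such a $w$ therefore has more than $2|V_{i-1}'|/3$ neighbours in $V_{i-1}'$, and this set must meet $Y$ because $|Y|\geq |V_{i-1}'|/2$. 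That gives a triangle. The exceptional vertices $w\in S$ need a separate argument. Here I would use that $w\in V_i''$ has $\gamma|V_{i+1}'|$ out-neighbours in $V_{i+1}'$. By Lemma \ref{ext-lem-beta-gamma}(a), $v$ has at least $(1-10\beta)|V_{i+1}'|$ out-neighbours in $V_{i+1}'$, because $N_H(v)\cap V_{i-1}'\neq\emptyset$. So $v$ and $w$ share a neighbour in $V_{i+1}'$, which again gives a triangle. This second argument covers every $w\in V_i''$, so it is the cleaner one to use.

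\emph{Parts (b) and (c).} Consider $w\in V_{i+3}'\cup V_{i+3}''=V_{i-2}'\cup V_{i-2}''$. If $w\in V_{i-2}'$, then by (\ref{ext-Nm}) $w$ is in-adjacent from almost all of $V_{i-3}'$. More directly, $w$ has at least $(1-\eta)|V_{i-1}|-1$ out-neighbours in $V_{i-1}'$ by (\ref{eq-new}), and this set meets $Y$, giving a triangle. If $w\in V_{i-2}''$, then $w$ has at least $\gamma|V_{i-1}'|$ out-neighbours in $V_{i-1}'$, but these need not meet $Y$. So $|N^+_D(v)\cap V_{i+3}''|$ must be bounded by counting instead. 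For (b), take $w\in V_{i+2}'$. Then $w$ has almost all of $V_{i+1}'$ as in-neighbours via $N^m(w)$, while $v$ has almost all of $V_{i+1}'$ as out-neighbours, so a common neighbour exists and neighbours of this kind are excluded. It remains to bound $|N^+_D(v)\cap V_{i+2}''|$ and $|N^+_D(v)\cap V_{i+3}''|$ by $20M+80$.

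\emph{The main obstacle: bounding the $V''$ parts.} I would split $V_{i+2}''$ according to Lemma \ref{ext-set-S}, applied with index $i+2$. Any $w\in V_{i+2}''\setminus S$ has more than $2|V_{i+1}'|/3$ in-neighbours in $V_{i+1}'$. Since $v$ is adjacent to a $(1-10\beta)$-fraction of $V_{i+1}'$, these in-neighbours meet $N_H(v)$, giving a triangle. Hence $N^+_D(v)\cap V_{i+2}''\subseteq S$, and $|S|\leq 20M+80$. For $V_{i+3}''=V_{i-2}''$, I would apply Lemma \ref{ext-set-S} with index $i-2$. Any $w\notin S$ has more than $2|V_{i-3}'|/3$ in-neighbours in $V_{i-3}'=V_{i+2}'$. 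To get a contradiction I would need $v$ to be adjacent to many vertices of $V_{i+2}'$, but $v$ has none, so this approach fails. Instead I would use $Y$ and the definition of $V''$: a vertex $w\in V_{i-2}''$ is placed so that it has $\gamma|V_{i-1}'|$ out-neighbours in $V_{i-1}'$. The hard part is that these out-neighbours might avoid $Y$. I expect to handle this with a counting argument in the style of Lemma \ref{ext-set-S}. Alternatively, I would try to show that $w$ in fact has more than $|V_{i-1}'|/2$ neighbours in $V_{i-1}'$, via the maximality of $M$ and Lemma \ref{ext-lem-size}, which would force a common neighbour with $Y$ and hence a triangle. This is the step I consider least secure.
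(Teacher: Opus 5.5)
Your arguments for (a) and (b), and your exclusion of out-neighbours in $V_{i+3}'$, are correct and follow the paper. Lemma \ref{ext-lem-beta-gamma}(a) makes $v$ adjacent to a $(1-10\beta)$-fraction of $V_{i+1}'$. This rules out out-neighbours in $V_i'\cup V_i''$ by triangle-freeness, and it forces every out-neighbour in $V_{i+2}''$ into the exceptional set of Lemma \ref{ext-set-S}. The gap is the bound on $|N^+_D(v)\cap V_{i+3}''|$ in (c), which you leave open. Neither of your suggested routes works as stated. Nothing forces a vertex $w\in V_{i+3}''$ to have more than $|V_{i-1}'|/2$ neighbours in $V_{i-1}'$: its out-neighbourhood may split between $V_{i-1}'\setminus Y$ and $V_{i+2}'$. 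Neither $M$ nor Lemma \ref{ext-lem-size} says anything about a single vertex of a $V''$ set. You also correctly saw that $w\notin S$ yields no triangle with $v$.

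The missing idea is to prove that every such $w$ \emph{is} in the set $S$ of Lemma \ref{ext-set-S} for index $i+3$, i.e. $|N^-_D(w)\cap V_{i+2}'|\le 2|V_{i+2}'|/3$. The contradiction comes from $w$'s own out-neighbourhood, not from $v$'s.
\begin{itemize}
\item Triangle-freeness with $v$ gives $|N_H(w)\cap V_{i+1}'|<10\beta|V_{i+1}'|$ and $|N_H(w)\cap V_{i-1}'|\le |V_{i-1}'|-|Y|\le |V_{i-1}'|/2$.
\item Fact \ref{ext-fact2}(a) gives $N_H(w)\cap (V_{i+3}'\cup V_i')=\emptyset$, and $|\bigcup_j V_j''|\le(\beta+\eta)n$.
\item Since $d^+_D(w)\ge n/5$, this forces $|N^+_D(w)\cap V_{i+2}'|>|V_{i+2}'|/3+3$.
\item Suppose four vertices $x_1,\dots,x_4\in N^+_D(w)\cap V_{i+2}'$ also satisfied $w\in N^+_D(x_j)$. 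The colours $c(wx_j)$ are distinct because they are out-edges of $w$. Also $c(x_jw)\neq c_{x_j}$, because $w\notin N^m(x_j)$ and $D$ was modified as in Definition \ref{def-ext-D}. Fact \ref{ext-fact4.5}(a) then gives a rainbow connector, which is excluded via Lemma \ref{ext-lem-path}.
\item Hence at most three out-neighbours of $w$ in $V_{i+2}'$ are also in-neighbours. So $|N^-_D(w)\cap V_{i+2}'|\le |V_{i+2}'|-|N^+_D(w)\cap V_{i+2}'|+3<2|V_{i+2}'|/3$, and Lemma \ref{ext-set-S} caps the number of such $w$ at $20M+80$.
\end{itemize}
This step, using two-way edges between $w$ and $V_{i+2}'$ to produce a rainbow connector, is what your proposal is missing.
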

\begin{proof}
Without loss of generality $i=0$.  Let $v\in V_0'\cup V_0''$ be such that $|N^+_D(v)\cap V_{4}'|\leq 7$ and $N_H(v)\cap V_{4}'\neq \emptyset$. By Lemma \ref{ext-lem-beta-gamma}(a), \begin{equation}\label{eq-L-25}|N^+_D(v)\cap V_{1}'|\geq (1-10\beta)|V_1'|.\end{equation}

Since $\beta\ll \gamma$ and $H$ is triangle-free, $N_D^+(v)\cap (V_{0}'\cup V_0'')=\emptyset$. Thus (a) holds.

For (b), we have $N_D^+(v)\cap V_2'=\emptyset$  by Fact \ref{ext-fact2}(a). If $w\in N_D^+(v)\cap V_{2}''$, then for every $x\in N_D^+(v)\cap V_1'$, $xw\notin E(H)$. From (\ref{eq-L-25}) and the fact that $\beta \ll 1$, $|N^-_D(w)\cap V_2'|\leq |N_H(w)\cap V_2'|\leq  2|V_2'|/3$. Consequently, by Lemma \ref{ext-set-S}, $|N_D^+(v)\cap V_{2}''|\leq 20M+80.$

We will now show (c). If $w\in N^+_D(v)\cap V_{3}'$, then $v$ and $w$ share a neighbor in $V_{4}'$. Let $w\in N^+_D(v)\cap V_{3}''.$  Then, by (\ref{eq-L-25}), $|N^+_D(w)\cap V_1'|<10\beta |V_1'|$. If $|N^+_D(w)\cap V_{4}'|>|V_4'|/2$, then there is a triangle $vwx$ for some $x\in V_4'$. 

Thus, by Fact \ref{ext-fact2}(a), $|N^+_D(w)\cap V_2'|>|V_2'|/3+3$. If for four vertices $x_1, \dots, x_4\in N^+_D(w)\cap V_2'$, we also have $w\in N^+_D(x_i)$, then $c(wx_1),\dots, c(wx_4)$ are four different colors and, by the definition of $D$, for every $i$, $c(wx_i)\neq c_{x_i}$. Then, by Fact \ref{ext-fact4.5}(a) there is a rainbow $x,y$-connector for some $x,y\in V_2'$. Therefore, $|N^-_D(w) \cap V_2'| < 2|V_2'|/3$, and  by Lemma \ref{ext-set-S}, $|N^+_D(v)\cap V_3''|\leq 20M +80.$ 
\end{proof}

We will now establish an upper bound on $M$. Our first goal is to prove that $M\leq 1$. This will be done in the next two lemmas.
\begin{defn}
Let $B_i$ be the set of vertices $w\in V_i'\cup V_i''$ such that $w$ is incident to at least $44M+193$ special edges $vw\in E(D)$ of type 2 with $v\in V_{i+1}'.$\end{defn}

\begin{lemma}\label{ext-lem-B_i}
If $|B_i|\geq 20M+82$ then $H$ contains an extendable $C_{4l}$ for every $l\in [4]$.
\end{lemma}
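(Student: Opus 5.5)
Without loss of generality $i=0$, so each $w\in B_0\subseteq V_0'\cup V_0''$ is incident to at least $44M+193$ special type-2 edges $vw$ with $v\in V_1'$. The plan is to turn many such $w$ into the configurations of Lemma \ref{ext-C12-C4-lem1}(b) and Lemma \ref{ext-C16-C8-lem1}, which produce extendable $C_4$, $C_8$, $C_{16}$. The extendable $C_{12}$ comes for free from Lemma \ref{ext-C12-C4-lem1}(a), since $B_0\neq\emptyset$ gives a special vertex.

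\emph{Step 1: colors at a fixed $w\in B_0$.} Since $w$ has at least four special edges, Fact \ref{ext-fact4.5}(b) gives either a rainbow $x,y$-connector with $x,y\in V_1'$ (and then Lemma \ref{ext-lem-path} gives a rainbow cycle of every even length $2k\ge 4$, which I will check can be taken extendable), or a color $\alpha_w$ carried by all but at most one special edge at $w$. In the second case we also use Lemma \ref{ext-lem2}: if $w\in V_0'$, every type-2 special edge at $w$ has color $c_w$, so $\alpha_w=c_w$. The edges $vw$ lie in $D$, so they have distinct colors from $v$'s side, but all but one have the single color $\alpha_w$ at $w$.

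\emph{Step 2: find a good vertex $v_1\in B_0$.} We want $|N^+_D(v_1)\cap(V_4'\cup V_4'')|\ge 7$, or at least $\ge 3$ for Lemma \ref{ext-C12-C4-lem1}(b). Suppose instead that every $w\in B_0$, except possibly a few, has at most $6$ out-neighbors in $V_4'\cup V_4''$. Such a $w$ is adjacent to many vertices of $V_1'$, far more than $|V_1'|/2$ only if the special degree is large. So I will argue via Lemma \ref{ext-lem-Mdegs}, or directly as in Lemma \ref{ext-lem-beta-gamma}, that $N^+_D(w)$ is essentially contained in $V_1'\cup V_1''$ up to $O(M)$ vertices in $V_2\cup V_3$.

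Counting color degree, $d^+_D(w)\ge (n+6)/5$, and Lemma \ref{ext-lem-size} bounds $|V_1'\cup V_1''|$. I will instead count from the $V_1'$ side. Each $v\in V_1'$ incident to a special edge $vw$ with $w\in B_0$ uses a color in $N^+_D(v)\cap V_0$. Double counting special edges between $V_1'$ and $B_0$ gives at least $(20M+82)(44M+193)$ edges, while the type-2 special degree of most $v\in V_1'$ is at most $M$. Only a set of fewer than $|V_1'|/10$ vertices can have degree exceeding $M$, but those vertices could still absorb many edges, so this count alone may not decide the matter. The constants $20M+82$ and $44M+193$ suggest the intended argument: discard the at most $20M+80$ exceptional vertices of Lemma \ref{ext-set-S} and Lemma \ref{ext-lem-Mdegs}, leaving at least two vertices $v_1,v_2\in B_0$. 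For each of these, $N^+_D(v_j)\cap V_1'$ is small, because the special edges contribute only the one color $\alpha_{v_j}$, so the out-degree bound forces $|N^+_D(v_j)\cap(V_4'\cup V_4'')|\ge 7$.

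\emph{Step 3: build the matching.} From $v_1,v_2$ and their $44M+193$ special neighbors each, choose $x_1,x_2\in V_1'$ distinct with $c(x_jv_j)=\alpha_{v_j}$. Choose greedily so that $c_{x_1},c_{x_2},c(x_1v_1),c(x_2v_2)$ are four different colors; this is possible because $c_x$ are distinct over $x\in V_1'$ by Fact \ref{ext-fact3}. If $\alpha_{v_1}=\alpha_{v_2}$, I will use instead the exceptional edge, or argue via Lemma \ref{ext-lem-smallset}-type matching that this forces a rainbow connector. Then Lemma \ref{ext-C16-C8-lem1}(a),(b) give extendable $C_{16}$ and $C_8$. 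Lemma \ref{ext-C12-C4-lem1}(b) applied to $x_1v_1$ gives $C_4$. The $C_{12}$ comes from part (a).

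\emph{Main obstacle.} The hardest part is Step 2: showing that some $w\in B_0$ has many out-neighbors in $V_4'\cup V_4''$. This needs the precise color-degree bound $(n+6)/5$ together with Lemmas \ref{ext-lem-size}, \ref{ext-set-S} and \ref{ext-lem-Mdegs}, and the arithmetic has to match the constants $20M+82$ and $44M+193$.
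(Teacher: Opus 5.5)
\textbf{Step 2 (mechanism misstated).} Your outline here is the paper's argument: keep the vertices of $B_0$ outside the set $S$ of Lemma~\ref{ext-set-S}, so that Lemma~\ref{ext-lem-Mdegs} applies, and play the out-degree bound against Lemma~\ref{ext-lem-size}. However, $N^+_D(w)\cap V_1'$ is not small. What matters is the following. Take the at least $44M+193$ vertices $v\in V_1'$ with $vw$ special of type~2. At most two of them can lie in $N^+_D(w)$, because all but one of the edges $vw$ have color $\alpha_w$ and the out-edges of $w$ in $D$ have distinct colors. Hence $|N^+_D(w)\cap(V_1'\cup V_1'')|\le |V_1'\cup V_1''|-44M-191$. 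Now suppose $w$ had at most $6$ out-neighbors in $V_4'\cup V_4''$. Lemmas~\ref{ext-lem-Mdegs} and~\ref{ext-lem-size} would then give $(n+6)/5\le 6+(n/5+4M+16)-(44M+191)+2(20M+80)=n/5-9$, a contradiction. Your double counting from the $V_1'$ side is not needed.

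\textbf{Step 3 (the genuine gap).} Lemma~\ref{ext-C16-C8-lem1} needs $c(x_1v_1)\neq c(x_2v_2)$. If $\alpha_{v_1}=\alpha_{v_2}$ and neither $v_j$ has an exceptional special edge, every choice of $x_1,x_2$ violates this. None of your fallbacks works:
\begin{itemize}
\item the exceptional edge need not exist;
\item Lemma~\ref{ext-lem-smallset} only concerns special edges between large subsets of $V_1'$ and $V_0'$, and says nothing about this pair;
\item your observation $\alpha_w=c_w$ would settle the case $v_1,v_2\in V_0'$ via Fact~\ref{ext-fact3}, but it is unavailable for $w\in V_0''$, where $B_0$ may mostly lie.
\end{itemize}

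The missing idea is to tie $\alpha_w$ to the in-edges of $w$ coming from $V_4'$. Let $x\in N^-_D(w)\cap V_4'$. By the construction of $D$, the out-edge of $x$ of color $c_x$ goes into $N^m(x)\subseteq V_3'$, so $c(xw)\neq c_x$. Suppose moreover $c(xw)\neq\alpha_w$ and $c_x\neq\alpha_w$. Pick $y\in V_1'$ with $c(yw)=\alpha_w$ and $c_y\notin\{c_x,c(xw)\}$. Then $xwy$ is a rainbow $x,y$-connector between $V_4'$ and $V_1'$, which Lemma~\ref{ext-lem-path} excludes. Hence all but at most one $x\in N^-_D(w)\cap V_4'$ satisfy $c(xw)=\alpha_w$.

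Now use the in-degrees of $v_1,v_2$. They lie outside $S$, or in $V_0'$ where (\ref{ext-Nm}) applies, so each has more than $2|V_4'|/3$ in-neighbors in $V_4'$. They therefore share an in-neighbor $x$ that is non-exceptional for both. Then $\alpha_{v_1}=c(xv_1)$ and $\alpha_{v_2}=c(xv_2)$ are the colors of two distinct out-edges of $x$ in $D$, so they differ. Only with this can you choose $x_1,x_2$ as in your Step~3 and invoke Lemma~\ref{ext-C16-C8-lem1}.
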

\begin{proof}
The case $l=3$ follows from Lemma \ref{ext-C12-C4-lem1}(a). By Lemma \ref{ext-lem-path} we may assume there is no rainbow connector. Let $W:= \{w\in B_i: |N^-_D(w)\cap V_{i-1}'|\geq  2|V_{i-1}'|/3\}$. 
If $w\in B_i\cap V_i'$, then $w\in W$. Therefore, by Lemma \ref{ext-set-S}, $|W|= |B_i\cap V_i'|+ |W\cap V_i''|\geq 2.$
\begin{claim}\label{ext-cl>=7} If $w\in W$, then $|N^+_D(w) \cap (V_{i-1}'\cup V_{i-1}'')|\geq 7.$
\end{claim}
\begin{proof} Let $w\in W$ and suppose $|N^+_D(w)\cap (V_{i-1}'\cup V_{i-1}'')|\leq 6$. By Lemma \ref{ext-lem-Mdegs} and Fact \ref{ext-fact2}(a),
$$|N_D^+(w)\cap (V_{i-1}'\cup V_{i-1}'')|\geq d^c(w)- |V_{i+1}'\cup V_{i+1}''|- 40M-160+ (44M+193)-2,$$
because by Fact \ref{ext-fact4.5}(b) there can be at most two vertices $v\in N^+_D(w)\cap V_{i+1}'$ such that $vw$ is special of type 2.
In addition, by Lemma \ref{ext-lem-size}, $|V_{i+1}'\cup V_{i+1}''|\leq n/5 +4M+16.$
Therefore, 
$|N^+_D(w)\cap (V_{i-1}'\cup V_{i-1}'')|> 6$.
\end{proof}
Since $W \neq \emptyset$, from Claim \ref{ext-cl>=7} and  Lemma \ref{ext-C12-C4-lem1}(b), there is an extendable $C_4$.

We will now show that there is an extendable $C_{4l}$ for $l=2,4$. Let $w\in W$. By Fact \ref{ext-fact4.5}(b) there is a color $\alpha_w$ such that all special edges $vw$ with $v\in V_{i+1}'$ but at most one have color $\alpha_w$.  Suppose $x\in V_{i-1}'\cap N^-_D(w)$ is such that $c(xw)\neq {\alpha}_w$. Since $x\in N^-_D(w)$, by the definition of $D$, $c(xw)\neq c_x$. If $c(xw)\neq \alpha_w$, then we can choose $y\in V_{i+1}'$ so that $c(wy)=\alpha_w$ and $c_y\notin \{c(xw),c_x\}$. This gives a rainbow $x,y$-connector unless $c_x = \alpha_w$, and there is at most one $x$ like that. Therefore, we may assume that for every $x\in V_{i-1}'\cap N^-_D(w)$ but at most one, $c(xw)=\alpha_w$. Since $|W|\geq 2$, there are two different vertices $w_1,w_2\in W$. Then $|N^-_D(w_1)\cap N^-_D(w_2)\cap V_{i-1}'|\geq 3$ and if $x\in N^-_D(w_1)\cap N^-_D(w_2)\cap V_{i-1}'$ is not one of at most two exceptions, then $c(xw_1)=\alpha_{w_1}$ and $c(xw_2)=\alpha_{w_2}$. Consequently, by the construction of $D$, $\alpha_{w_1}\neq \alpha_{w_2}$. By Claim \ref{ext-cl>=7} and Lemma \ref{ext-C16-C8-lem1}, there is a rainbow $C_8$ and a  rainbow  $C_{16}$. 
\end{proof}
Next  lemma will be used to address extendable $C_4$ and $C_{16}.$
\begin{lemma}\label{ext-lem-M=1simple}
Suppose for some $i$ there is a set $X\subseteq V_i'$ such that $|X|\geq 500\gamma  |V_i'|$ and for every $x\in X$, $d^s_2(x)\geq \max\{M,1\}$. If for every $v\in V_{i-1}'\cup V_{i-1}''$ which is incident to a special edge of type 2 $vx\in E(D)$ with $x\in X$, $|N^+_D(v) \cap (V_{i-2}'\cup V_{i-2}'')|\leq 7$  then $|B_{i-1}|\geq 20 M+82.$
\end{lemma}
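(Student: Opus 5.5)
The idea is to show that the special edges of type 2 leaving $X$ are concentrated on few vertices of $V_{i-1}'\cup V_{i-1}''$. Each such endpoint then receives many of them and lies in $B_{i-1}$. Throughout we may assume there is no rainbow connector (Lemma \ref{ext-lem-path}) and no rainbow $C_{2k}$ from Lemma \ref{ext-lem2}.

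First fix the endpoints and get a bound on how many there are. Let $Y$ be the set of vertices $v\in V_{i-1}'\cup V_{i-1}''$ incident to a special edge $xv$ of type 2 with $x\in X$. By hypothesis every $v\in Y$ has $|N^+_D(v)\cap(V_{i-2}'\cup V_{i-2}'')|\le 7$. Lemma \ref{ext-lem-beta-gamma}(b) (used in its version for out-neighbours in $V_{i-2}'$) then gives $|N^+_D(v)\cap V_i'|\ge (1-2\gamma)|V_i'|$. Combining this with the degree bound $\delta^+(D)\ge (n+6)/5$ and Lemma \ref{ext-lem-size}, $v$ has almost all of $V_i'$ as out-neighbours and essentially nothing else.

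Next, bound the edges from $X$ to each $v\in Y$. Take $x\in X$ with $xv$ special of type 2, so $c(xv)\ne c_x$. For $x'\in N^+_D(v)\cap V_i'$, the colours $c(vx')$ are all distinct. Any $x'$ with $c(vx')\notin\{c_x,c(xv)\}$ and $c_{x'}\notin\{c(xv),c(vx')\}$ yields a rainbow $x,x'$-connector $xvx'$, with $j=i$ in Lemma \ref{ext-lem-path}. Thus for nearly all $x'\in N^+_D(v)\cap V_i'$ we must have $c_{x'}\in\{c(xv),c(vx')\}$. By Fact \ref{ext-fact3} only one $x'$ has $c_{x'}=c(xv)$, so $c(vx')=c_{x'}$ for all but $O(1)$ such $x'$.

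The following counting step is the main obstacle. Each $x\in X$ has $d^s_2(x)\ge\max\{M,1\}\ge 1$ special type-2 edges into $Y$, and these carry distinct colours. By Fact \ref{ext-fact4.5}(b), for each $v$ all but at most one of its incident special edges share one colour $\alpha_v$. The aim is to show that edges into a common vertex are forced by the structure of the $V_i'$-vertices' in-neighbourhoods. Concretely, I would argue that $|Y|\le |V_i'|/(\text{large})$: a vertex $v\in Y$ whose out-edges to $V_i'$ have colour $c_{x'}$ must lie in $N^m(x')$ in the modified $D$ of Definition \ref{def-ext-D}, unless $v\notin V_{i-1}'$. This pins $Y$ mostly inside $V_{i-1}''$, which is small by (\ref{eq-V_i''}); the few $Y$-vertices in $V_{i-1}'$ are controlled by Lemma \ref{ext-lem-smallset}.

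Finally, average. The number of special type-2 edges from $X$ is at least $|X|\ge 500\gamma|V_i'|$. Since $|Y|$ is small (at most about $6\beta|V_{i-1}'|$ plus $O(\beta)|V_i'|$), more than $20M+82$ vertices of $Y$ each receive at least $44M+193$ such edges. For these vertices, $\gamma\gg\beta$ gives room after discarding the $O(M)$ loss from Fact \ref{ext-fact4.5}(b). Hence $|B_{i-1}|\ge 20M+82$. If the bound on $|Y|$ fails, I would instead fall back on a second moment argument over $v\in Y$, comparing with the degree surplus from $\delta^+(D)\ge (n+6)/5$.
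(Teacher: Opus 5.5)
Your outline (confine the endpoints of the type-2 special edges leaving $X$ to a small set, then average) matches the paper's. The final counting step, however, does not follow as you state it.

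First, a small endpoint set $Y$ only gives a large \emph{average} number of type-2 special edges per endpoint. It does not give more than $20M+82$ endpoints each receiving $44M+193$ of them, because a few vertices could absorb almost all the edges. You need a per-vertex cap, and this is exactly where the hypothesis $|N^+_D(v)\cap(V_{i-2}'\cup V_{i-2}'')|\le 7$ is used. Your second paragraph contains the ingredients but never draws the conclusion:
\begin{itemize}
\item by Lemma~\ref{ext-lem-beta-gamma}(b), every endpoint $w$ has $|N^+_D(w)\cap V_i'|\ge(1-2\gamma)|V_i'|$;
\item your connector argument gives $c(wx')=c_{x'}$ for all but $O(1)$ vertices $x'\in N^+_D(w)\cap V_i'$, so $x'w$ is special for only $O(1)$ of them;
\item hence $w$ receives at most $2\gamma|V_i'|+O(1)<3\gamma|V_i'|$ type-2 special edges.
\end{itemize}
The paper sharpens this cap to $11\beta|V_i'|$ for endpoints with a neighbour in $V_{i-2}'$, via Lemma~\ref{ext-lem-beta-gamma}(a), and bounds the remaining endpoints by Lemma~\ref{ext-set-S}. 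With the constant $500$, the cruder cap already suffices.

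Second, your lower bound of ``at least $|X|$ edges'' is too weak. With a cap of order $\gamma|V_i'|$ it yields at best about $500/3$ heavy vertices, independently of $M$. The target $20M+82$ grows with $M$, and $M$ is not bounded a priori at this stage. You must use $d^s_2(x)\ge M^*:=\max\{M,1\}$ to get at least $M^*$ edges per $x$.

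Third, $Y$ is small only after restricting $X$. Your reason for $Y$ lying mostly in $V_{i-1}''$ (that $v$ would have to lie in $N^m(x')$) excludes nothing, since a typical vertex of $V_{i-1}'$ lies in $N^m(x')$ for most $x'\in V_i'$. Moreover, vertices of $X$ outside the set $U_i$ of Lemma~\ref{ext-lem-smallset} are unconstrained and may send type-2 special edges into most of $V_{i-1}'$.

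The fix is as follows. Take $U_i,U_{i-1}$ from Lemma~\ref{ext-lem-smallset} and set $X^*:=X\cap U_i$. Every type-2 special edge from $X^*$ then ends in $W:=(V_{i-1}'\setminus U_{i-1})\cup V_{i-1}''$, and $|W|\le 8\beta|V_{i-1}'|$ by (\ref{eq-V_i''}). Let $W'$ be the set of $w\in W$ receiving at least $300M^*\ge 44M+193$ such edges. Counting edges from $X^*$ gives
\[
M^*|X^*|\;\le\;3\gamma|V_i'|\cdot|W'|+300M^*|W| .
\]
Together with $|X^*|\ge(500\gamma-2\beta)|V_i'|$ and $\beta\ll\gamma$, this yields $|W'|\ge 150M^*\ge 20M+82$, i.e.\ $|B_{i-1}|\ge 20M+82$. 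Neither the ``$O(M)$ loss from Fact~\ref{ext-fact4.5}(b)'' nor the second-moment fallback is needed.
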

\begin{proof}
Without loss of generality $i=1$. Let $M^*:= \max\{M,1\}$. From Lemma \ref{ext-lem-smallset}  and (\ref{eq-V_i''}) there exist $W\subseteq V_0'\cup V_0''$ and $U\subseteq V_1'$ such that $|W|\leq 2\beta |V_0'|+|V_0''|\leq 8\beta |V_0'|$, $|U|\geq (1-2\beta)|V_1'|$, and for every $v\in U\cap X$, $vw$ is special of type 2 for at least $M^*$ vertices $w\in W$. 

Let $X^*:= U\cap X$ and $Z:=\{v\in V_0'': N_H(v)\cap V_4'=\emptyset\}$. We use the auxiliary constant $K=300$, and define $W'$ to be the set of vertices $w\in {W}$ that are incident to $K\cdot M^*$ special edges of type 2 { with one end in $X^\ast$}. Note that $K\cdot M^*> 44M+193$. Thus if $w\in W'$, then $w\in B_i$. We will now prove a lower bound for $|W'|.$

By Fact \ref{ext-fact4.5}, if $w\in W$ then all but at most one of special edges $vw$ of type 2 are of the same color. Therefore, since the edges in $D$ that begin at $w$ have distinct colors, there are at most two vertices $v\in X^*$ such that $vw$ is special of type 2. 

If $w\in W'\cap Z$, then by Lemma \ref{ext-lem-beta-gamma}(b), $w$ can be incident to at most $2\gamma|V_1'|+2<3\gamma |V_1'|$ vertices $v\in V_1'$ such that $vw$ is special of type 2. If $w\in W'\setminus Z$, then by Lemma \ref{ext-lem-beta-gamma}(a)  there are less than $11\beta |V_1'|$ vertices $v\in V_1'$ such that $wv$ is special of type 2. Therefore, counting special edges of type 2 between $X^*$ and $W$ gives
$$M^*|X^*| \leq 11\beta |V_1'|\cdot |W'\setminus Z|+ 3\gamma |V_1'|\cdot|W'\cap Z| +K\cdot M^*|W\setminus W'|.$$
Since by Lemma \ref{ext-set-S}, $|W'\cap Z|\leq |Z|\leq 100M^*$, and $|W\setminus W'|\leq |W|\leq 8\beta |V_0'|$,
$$|W'\setminus Z|\geq M^*\frac{|X^*|- 300\gamma |V_1'|-8\beta K|V_0'|}{11\beta |V_1'|}\geq 20 M + 82,$$
with a lot of room to spare, as $|X^*|\geq |X|-2\beta |V_1'|\geq (500\gamma-2\beta)|V_1'|$ and $\gamma \gg \beta.$
\end{proof}

\begin{cor}\label{ext-cor-C4}
    Suppose for some $i$ there is a set $X\subseteq V_i'$ such that $|X|\geq 500\gamma  |V_i'|$ and for every $x\in X$, $d^s_2(x)\geq \max\{M,1\}$. Then there is an extendable $C_4$ and an extendable $C_{12}$.
\end{cor}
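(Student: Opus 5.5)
The plan is to derive both conclusions from Lemma \ref{ext-C12-C4-lem1}, Lemma \ref{ext-lem-M=1simple} and Lemma \ref{ext-lem-B_i}, splitting according to whether the hypothesis of Lemma \ref{ext-lem-M=1simple} holds.

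The extendable $C_{12}$ comes first and is essentially free. Since $|X|\geq 500\gamma|V_i'|>0$, pick any $x\in X$. Because $d^s_2(x)\geq \max\{M,1\}\geq 1$, there is a special edge $xv$ of type 2 with $v\in V_{i-1}'\cup V_{i-1}''$. Hence $v$ is a special vertex, and Lemma \ref{ext-C12-C4-lem1}(a) yields an extendable $C_{12}$.

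For the extendable $C_4$ we split into two cases. In the first case, some $v\in V_{i-1}'\cup V_{i-1}''$ is incident to a special edge $xv$ of type 2 with $x\in X$ and satisfies $|N^+_D(v)\cap (V_{i-2}'\cup V_{i-2}'')|>7$, so in particular $\geq 3$. Such an edge is special, so Lemma \ref{ext-C12-C4-lem1}(b) applied to $x$ and $v$ gives an extendable $C_4$ directly. In the second case, every such $v$ has $|N^+_D(v)\cap(V_{i-2}'\cup V_{i-2}'')|\leq 7$. These are exactly the hypotheses of Lemma \ref{ext-lem-M=1simple}, which gives $|B_{i-1}|\geq 20M+82$. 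Lemma \ref{ext-lem-B_i}, applied with index $i-1$, then produces an extendable $C_{4l}$ for every $l\in[4]$, and in particular an extendable $C_4$ (and again a $C_{12}$).

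The only point needing care is that the standing assumptions of those lemmas are in force. Lemma \ref{ext-lem-B_i} internally invokes Lemma \ref{ext-lem-path} to discard rainbow connectors, and that is harmless since a connector already gives rainbow (indeed extendable-type) cycles of every even length. The orientation convention must also match: Lemma \ref{ext-C12-C4-lem1}(b) uses a special edge $xv\in E(D)$ directed from $V_i'$ to $V_{i-1}'\cup V_{i-1}''$, which is the same orientation used in the hypothesis of Lemma \ref{ext-lem-M=1simple}. So the case split is exhaustive, and no genuine obstacle is expected beyond this bookkeeping.
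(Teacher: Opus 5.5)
Your proposal is correct and is essentially the paper's proof: the extendable $C_{12}$ comes from Lemma~\ref{ext-C12-C4-lem1}(a), and the extendable $C_4$ comes from Lemma~\ref{ext-C12-C4-lem1}(b), Lemma~\ref{ext-lem-M=1simple} and Lemma~\ref{ext-lem-B_i}. Your explicit case split at threshold $7$ is an unrolled form of the paper's contrapositive, which uses Lemma~\ref{ext-C12-C4-lem1}(b) to obtain the bound $\leq 2$ (hence $\leq 7$); you also correctly apply Lemma~\ref{ext-lem-B_i} to $B_{i-1}$ where the paper writes $B_i$, and correctly place the special vertex in $V_{i-1}'\cup V_{i-1}''$ rather than in $V_i'$.
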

\begin{proof}
An extendable $C_{12}$ follows immediately from Lemma \ref{ext-C12-C4-lem1}(a) because there exists a special vertex in $V_{i}'$.

Suppose there is no extendable $C_4$. By Lemma \ref{ext-C12-C4-lem1}(b), for every vertex $v\in V_{i-1}'\cup V_{i-1}''$ that is incident to a special edge $xv$ for  some $x
\in V_{i}'$,  $|N^+_D(v)\cap (V_{i-2}'\cup V_{i-2}'')|\leq 2$. From Lemma \ref{ext-lem-M=1simple}, $|B_i|\geq 20M+82$, and so, by Lemma \ref{ext-lem-B_i}, there is an extendable $C_4.$
\end{proof}
We will now prove the main lemma in this section.

\begin{lemma}\label{ext-lem-M=1}
 If for some $i$ there is a set $X\subseteq V_i'$ such that $|X|\geq 700\gamma |V_i'|$ and for every $x\in X$, $d^s_2(x)\geq \max\{M,2\}$, there is an extendable $C_{4l}$ for every $l \in [4]$.
\end{lemma}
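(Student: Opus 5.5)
Since $|X|\geq 700\gamma|V_i'|\geq 500\gamma|V_i'|$ and $\max\{M,2\}\geq\max\{M,1\}$, Corollary~\ref{ext-cor-C4} immediately gives an extendable $C_4$ and an extendable $C_{12}$. It remains to produce extendable $C_8$ and $C_{16}$. We may assume there is no rainbow connector (Lemma~\ref{ext-lem-path}), and that $|B_{i-1}|<20M+82$, since otherwise Lemma~\ref{ext-lem-B_i} finishes. Throughout, take $i=1$.

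\emph{Step 1: split $X$ by the out-degree of its special neighbours.} Call a vertex $v\in V_0'\cup V_0''$ \emph{heavy} if $|N^+_D(v)\cap(V_4'\cup V_4'')|\geq 7$, and \emph{light} otherwise. Let $X_1\subseteq X$ be the set of $x\in X$ incident to at least one special edge of type~2 whose other end is heavy. Suppose first that $|X\setminus X_1|\geq 500\gamma|V_1'|$. Then Lemma~\ref{ext-lem-M=1simple} applied to $X\setminus X_1$ gives $|B_0|\geq 20M+82$, which is a contradiction. Hence $|X_1|\geq 200\gamma|V_1'|$.

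\emph{Step 2: build a rainbow matching of two special edges with heavy ends.} This is the point where $d^s_2\geq 2$ is used. Intersect $X_1$ with the set $U$ from Lemma~\ref{ext-lem-smallset}. A special edge $xv$ of type~2 has colour $c_v$ if $v\in V_0'$, by the remark after the definition of type~2 edges. By Fact~\ref{ext-fact4.5}(b) and the fact that out-edges of $v$ have distinct colours, each $v$ is the special neighbour of at most two vertices of $X$. Also, $|B_0|$ is small, so only few $x$ share heavy neighbours with many other vertices.

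Pick $x_1\in X_1$ with a heavy special neighbour $v_1$. Then pick $x_2\in X_1$ with a heavy special neighbour $v_2\neq v_1$ such that $x_2\neq x_1$ and $\{c(x_1v_1),c(x_2v_2),c_{x_1},c_{x_2}\}$ are four distinct colours. This choice avoids $O(1)$ colours and vertices, which is possible because $|X_1|$ is linear in $n$ and the colours $c_x$ are distinct (Fact~\ref{ext-fact3}). If $x_2$'s only heavy special neighbour produces a colour clash, we use instead its second special type-2 edge, which exists because $d^s_2(x_2)\geq 2$ and carries a different colour. This flexibility is what makes the four colours distinct.

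\emph{Step 3: finish.} Now $x_1v_1,x_2v_2$ is a matching with four distinct colours and both $v_j$ heavy. Lemma~\ref{ext-C16-C8-lem1}(b) yields an extendable $C_8$ and part~(a) yields an extendable $C_{16}$.

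The main obstacle is Step~2. Both $v_1$ and $v_2$ must be heavy, which is needed for part~(b), yet Step~1 only guarantees one heavy special neighbour per vertex of $X_1$. If instead most $x\in X_1$ have exactly one heavy neighbour, and these collapse onto few vertices, the argument breaks. To handle this, I would count as in the proof of Lemma~\ref{ext-lem-M=1simple}, using Lemma~\ref{ext-lem-beta-gamma}. A heavy $v$ receiving many type-2 special edges lies in $B_0$, and $B_0$ is small. So the heavy neighbours of $X_1$ must be numerous and mostly distinct, which leaves enough choices to pick $x_2$ and $v_2$ compatibly.
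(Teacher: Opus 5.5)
Your reduction (Corollary~\ref{ext-cor-C4} for $C_4$ and $C_{12}$, plus assuming $|B_0|<20M+82$ because of Lemma~\ref{ext-lem-B_i}) is sound. So is Step~1: every type-2 special neighbour of a vertex of $X\setminus X_1$ is light, so Lemma~\ref{ext-lem-M=1simple} applies verbatim.

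\textbf{The gap is in Step~2.} It is exactly the case you flag as the main obstacle, and your resolution of it is wrong.

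The claim that each $v$ is the special neighbour of at most two vertices of $X$ is false. A special edge $xv$ is an out-edge of $x$, i.e.\ an in-edge of $v$ in $D$, so distinct colours on $v$'s out-edges do not limit how many there are. Fact~\ref{ext-fact4.5}(b) only forces all but one of them to have colour $\alpha_v$, and a monochromatic star is allowed. The out-colour property shows only that $O(1)$ of these in-neighbours are also out-neighbours of $v$. That caps the special in-degree of $v$ only when Lemma~\ref{ext-lem-beta-gamma} applies, i.e.\ when $v$ is light. For heavy $v$ there is no cap.

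Smallness of $B_0$ bounds the number of such hubs, not how many edges one hub absorbs. Consider a single heavy $v\in B_0$ that receives a special edge of colour $\alpha_v$ from almost every $x\in X$, with all other type-2 special edges of $X$ ending at light vertices. This is consistent with $|B_0|<20M+82$. Then $X_1$ is essentially all of $X$, Step~1 gives nothing, and there is no second heavy $v_2\neq v$. Your fallback, the second special edge of $x_2$, ends at a light vertex. That suffices for Lemma~\ref{ext-C16-C8-lem1}(a) but not for part~(b), so no $C_8$ comes out.

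Nor are heavy neighbours ever ``mostly distinct''. By Lemma~\ref{ext-lem-smallset} and $|V_0''|\le 6\beta|V_0'|$, all type-2 special edges from $X_1\cap U$ end in a set of at most $8\beta|V_0'|$ vertices, while $|X_1\cap U|\ge 199\gamma|V_1'|$.

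\textbf{The missing idea} is to use $d^s_2\ge 2$ to strip off a hub and then count, rather than search directly for a second heavy edge.
\begin{enumerate}
\item Suppose a heavy $v$ receives at least four type-2 special edges from $X^*=X\cap U$; if none does, count directly. Fix $x_1,x_2$ with $c(x_1v)=c(x_2v)=\alpha_v$. Delete all special edges at $v$ and all remaining type-2 special edges of colour $\alpha_v$; this also neutralises other hubs of the same colour. Discard $x_1,x_2$ and the vertex $y$ with $c_y=\alpha_v$.
\item Each $x$ has at most one special out-edge of colour $\alpha_v$. Hence at least $(M^*-1)(|X^*|-4)$ special edges survive, where $M^*=\max\{M,2\}$, so $M^*-1\ge 1$.
\item If a surviving edge $zw$ has $w$ heavy, pick $j$ with $c_{x_j}\neq c(zw)$. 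Then $x_jv,\,zw$ is a matching with four distinct colours and both ends heavy, and Lemma~\ref{ext-C16-C8-lem1} gives $C_8$ and $C_{16}$.
\item Otherwise every surviving edge ends at a light vertex. By Lemma~\ref{ext-lem-beta-gamma}, such a vertex has special in-degree below $11\beta|V_1'|$, or below $3\gamma|V_1'|$ if it has no neighbour in $V_4'$. Redo the counting of Lemma~\ref{ext-lem-M=1simple} on the surviving edges with threshold $300(M^*-1)>44M^*+193$. This gives $|B_0|\ge 20M+82$.
\end{enumerate}
This last count has to be done by hand. Lemma~\ref{ext-lem-M=1simple} cannot be quoted, because its hypothesis restricts every special neighbour of $X$, the hub included.
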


\begin{proof}
In view of Corollary \ref{ext-cor-C4} we may assume indirectly that there is no extendable cycle of length $8$ or no extendable cycle of length $16$.  We will show that $|B_{i-1}| \geq 20M + 82$, and so the result follows by Lemma \ref{ext-lem-B_i}.

Without loss of generality $i=1$. 
Let $M^*:=\max\{M,2\}$. From Lemma \ref{ext-lem-smallset} there exist $W\subseteq V_0'\cup V_0''$ and $U\subseteq V_1'$ such that $|W|\leq 8\beta |V_0'|$, $|U|\geq (1-2\beta)|V_1'|$ and for every $v\in U\cap X$, $vw$ is special of type 2 for at least $M^*$  vertices $w\in W$. 
Let $X^*:= U\cap X$, $Z:=\{v\in V_0'': N_H(v)\cap V_4'=\emptyset\}$. Let $Y:=\{w\in V_0'
\cup V_0'':|N^+_D(w)\cap (V_4'\cup V_4'')|\geq 7\}.$

Suppose there exists $v\in Y$ such that for at least four vertices $w\in X^*$, $wv$ 
 is special of type 2. By Fact \ref{ext-fact4.5} there exists color $\alpha_v$ such that all but at most one of the special edges $wv$ with $w\in X^*$  have color $\alpha_v$.
Let $x_1,x_2\in X^*$ be two different vertices such that $vx_1$, $vx_2$ are special of type 2 and $c(vx_1)=c(vx_2)=\alpha_v$. Modify $H[W,X^*]$ as follows.
\begin{itemize}
    \item[(a)] Delete all special edges of type 2 incident to $v$.
    \item[(b)] Delete any remaining special edges of type 2 of color $\alpha_v$.
    \item[(c)] Remove from $X^*$ vertices $x_1,x_2$ and vertex $y$ such that $c_y=\alpha_v$ if it exists.
\end{itemize}
Let $\widetilde{W}$ be the set of vertices $w\in W$ that are incident to a special edge of type 2 in $H[W,X^*]$ after the modification.
\begin{claim}\label{cl1-ext-lem-M=1} There are at least $(M^*-1)(|X^*|-4)$ special edges of type 2 in $H[\widetilde{W}, X^*]$ after the modification.
\end{claim}
\begin{proof}
For every vertex $x\in X^*\setminus \{x_1,x_2,y\}$ we removed at most one edge incident to $x$ of color $\alpha_v$. In addition, we possibly removed one additional edge incident to $v$. Thus the number of special edges of type 2 after the modification is at least 
$$M^*(|X^*|-3)-(|X^*|-3)- 1\geq (M^*-1)(|X^*|-4),$$
because $M^*\geq 2$.
\end{proof}

\begin{claim} \label{cl2-ext-lem-M=1}$\widetilde{W}\cap Y=\emptyset$.
\end{claim}
\begin{proof}
Suppose $w\in \widetilde{W}\cap Y$ and $zw$ is special of type 2 after the modification. Then $w\neq v$ and $z\notin \{x_1,x_2, y\}.$ Since all special edges of color $\alpha_v$ are removed, $c(wz)\neq \alpha_v$.
Since $z\neq y$, $c_z\neq \alpha_v$. Without loss of generality, $c_{x_1}\neq c(wz)$. Thus $x_1v, zw$ is a matching and $c_{x_1},c_z, c(x_1v), c(wz)$ are four different colors. Then by Lemma \ref{ext-C16-C8-lem1} there is an extendable $C_l$ for $l=8, 16$.
\end{proof}
From Claim \ref{cl2-ext-lem-M=1} either every vertex $w$ which is incident to at least four special edges of type 2 in $H[W,X^*]$ is not in $Y$ or for every $w\in \widetilde{W}$, $w\notin Y$. In the former case, we let $\widetilde{W}$ to be the set of vertices incident to at least four special edges of type 2 prior to the modification. Then, in any case, $\widetilde{W}\cap Y=\emptyset.$ 

Let $K=300$ and let $W'$ be the set of vertices $w\in \widetilde{W}$ that are incident to $K\cdot (M^*-1)$ special edges of type 2 after the modification (or before if one is not made). Note that $K\cdot(M^*-1)> 44M^*+193$ because $M^*\geq 2$. In addition, $W'\cap Y=\emptyset$.

Now we proceed in a way that is analogous to the proof of Lemma \ref{ext-lem-M=1simple}. 
If $w\in W'\cap Z$, then by Lemma \ref{ext-lem-beta-gamma}(b), $w$ can be incident to less than $3\gamma |V_1'|$ vertices $v\in V_1'$ such that $vw$ is special of type 2. If $w\in W'\setminus Z$, then by Lemma \ref{ext-lem-beta-gamma}(a)  there are less than $11\beta |V_1'|$ vertices $v\in V_1'$ such that $wv$ is special of type 2. 
From Claim \ref{cl1-ext-lem-M=1} we have at least $(M^*-1)(|X^*|-4)$ special edges of type 2 in $H[\widetilde{W},X^*]$. Therefore,
$$(M^*-1)(|X^*|-4)\leq 11\beta |V_1'|\cdot |W'\setminus Z|+ 3\gamma |V_1'|\cdot|W'\cap Z| +K\cdot(M^*-1)|\widetilde{W}\setminus W'|.$$
By Lemma \ref{ext-set-S}, $|Z|\leq 60M+80\leq 60M^*+80\leq 200(M^*-1)$, and $|\widetilde{W}\setminus W'|\leq |W|\leq  8\beta |V_0'|$. Thus
$$|W'\setminus Z|\geq (M^*-1)\frac{|X^*|-4- 600\gamma |V_1'|-8\beta K|V_0'|}{11\beta |V_1'|}>(M^*-1)\frac{2\gamma}{\beta}\geq \frac{\gamma}{\beta}{M^*}\geq 20 M^* + 82,$$
using the bound for $|X^*|$, the fact that $\beta \ll \gamma$, and $M^*\geq 2$. If $w\in W'$ then $w\in B_{0}$ and so $|B_{0}|\geq 20M^*+82\geq 20 M+82$.
\end{proof}
Since $\gamma \ll 1,$ the following corollary follows immediately from Lemma \ref{ext-lem-M=1}.
\begin{cor}\label{ext-cor-M=1}
    If $M\geq 2$, then there is an extendable $C_{4l}$ for every $l\in \{1,2, 3,4\}.$
\end{cor}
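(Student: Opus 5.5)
The plan is to unwind the definition of $M$ and feed a suitable set into Lemma \ref{ext-lem-M=1}. Suppose $M\geq 2$. By (\ref{ext-eq-M}) there is an index $i$ with $M_i=M$. By (\ref{ext-eq-Mi}) there is then a set $X\subseteq V_i'$ with $|X|\geq |V_i'|/10$ such that $\min_{v\in X}d^s_2(v)=M_i=M$.

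So every $x\in X$ satisfies $d^s_2(x)\geq M=\max\{M,2\}$, the last equality because $M\geq 2$. Since $\gamma\ll 1$, in particular $700\gamma\leq 1/10$, we get $|X|\geq |V_i'|/10\geq 700\gamma|V_i'|$. Thus $X$ satisfies both hypotheses of Lemma \ref{ext-lem-M=1}, and that lemma gives an extendable $C_{4l}$ for every $l\in[4]$.

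Two points need care.

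\begin{itemize}
\item The maximum in (\ref{ext-eq-Mi}) must be over a nonempty family. It is, since $X=V_i'$ itself qualifies, and $V_i'\neq\emptyset$ because $|V_i'|\geq(1-\beta)|V_i|$ by Lemma \ref{lem-V'} (otherwise we already have the rainbow cycle).
\item The standing assumptions used inside Lemma \ref{ext-lem-M=1}, such as the bound $\delta^c(H)\geq (n+6)/5$ and the absence of extendable cycles for some $l$, are exactly the context in which that lemma was proved, so no new argument is required.
\end{itemize}

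I therefore expect no real obstacle here. The substantive work is already in Lemma \ref{ext-lem-M=1}, and the only check is that the constant $700\gamma$ is below $1/10$.
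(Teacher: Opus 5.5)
Your proposal is correct and is the same argument as the paper, which says only that the corollary ``follows immediately from Lemma \ref{ext-lem-M=1}'' since $\gamma\ll 1$. Taking $i$ and $X$ attaining $M_i=M$ gives $|X|\geq |V_i'|/10\geq 700\gamma|V_i'|$ and $d^s_2(x)\geq M=\max\{M,2\}$ for all $x\in X$, which is precisely the unwinding the paper leaves implicit.
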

Recall that we have $\delta^c(H)\geq \frac{n+6}{5}$. 
\begin{cor}\label{ext-cor-size}
If for some $i$, $|V_i'\cup V_i''|\geq n/5+ 20$, then  $H$ has an extendable $C_{4l}$ for every $l\in \{1,2, 3,4\}.$
\end{cor}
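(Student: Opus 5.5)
We argue by contradiction. Suppose that for some $l\in\{1,2,3,4\}$ there is no extendable $C_{4l}$. By Corollary \ref{ext-cor-M=1} we may then assume $M\leq 1$. The plan mirrors the proof of Lemma \ref{ext-lem-size}. A large part $V_i'\cup V_i''$ forces some other part to be small. A small part, in turn, forces many vertices of the preceding part $V_{j-1}'$ to have type-2 special degree exceeding $M$. That contradicts the definition of $M_{j-1}$ (or, if the excess is at least $2$ on a set of size at least $700\gamma|V_{j-1}'|$, it yields the cycles via Lemma \ref{ext-lem-M=1}).

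\textbf{Step 1: a large part forces a small part.} Since $\{V_j'\cup V_j''\}_{j=0}^4$ partitions $V(H)$, if $|V_i'\cup V_i''|\geq n/5+20$ then
$$\sum_{j\neq i}|V_j'\cup V_j''|\leq 4n/5-20.$$
Hence some $j\neq i$ satisfies $|V_j'\cup V_j''|\leq n/5-5$. I would rather use the upper bound $|V_{j'}'\cup V_{j'}''|\leq n/5+4(M+4)\leq n/5+20$ from Lemma \ref{ext-lem-size} with $M\leq 1$ to pin down the sizes more precisely. Either way, the goal is an index $j$ with $|V_j'\cup V_j''|\leq n/5-5$, or slightly smaller if the averaging allows it.

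\textbf{Step 2: counting colors at vertices of $V_{j-1}'$.} Fix $v\in V_{j-1}'\setminus S_{j-1}(4)$; by Corollary \ref{ext-cor-S(l)}, all but at most $36$ vertices of $V_{j-1}'$ qualify. By Fact \ref{ext-fact2}(b), the colors at $v$ come from four sources:
\begin{itemize}
\item edges into $V_j'\cup V_j''$, contributing at most $|V_j'\cup V_j''|$ colors;
\item the color $c_v$ (edges into $N^m(v)$);
\item at most $3$ further colors on edges into $V_{j+1}''$, since $v\notin S_{j-1}(4)$;
\item special edges into $V_{j-2}'\cup V_{j-2}''$, contributing $d^s_1(v)+d^s_2(v)\leq 1+d^s_2(v)$ colors.
\end{itemize}
Using $d^c(v)\geq (n+6)/5$, this gives
$$d^s_2(v)\geq \frac{n+6}{5}-5-|V_j'\cup V_j''|.$$
With $|V_j'\cup V_j''|\leq n/5-5$, we get $d^s_2(v)\geq 2$ for all these $v$. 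Since they form far more than $\max(|V_{j-1}'|/10,\,700\gamma|V_{j-1}'|)$ vertices, the definition of $M$ gives $M\geq 2$. Corollary \ref{ext-cor-M=1} (equivalently Lemma \ref{ext-lem-M=1}) then supplies extendable $C_{4l}$ for every $l\in[4]$, a contradiction.

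\textbf{Main obstacle.} The difficulty lies in the constants in Step 1: one must actually produce a part small enough that the lower bound on $d^s_2$ reaches $2$. Plain averaging gives only $n/5-5$, which yields exactly $d^s_2(v)\geq 2$, so the accounting in Step 2 must be tight. If the count loses one more color, I would first sharpen the averaging. Every other part has size at most $n/5+4(M+4)$ by Lemma \ref{ext-lem-size}; when $M=0$ this gives at most $n/5+16$, which yields a much smaller part. When $M=1$, I would treat the case directly by applying the same argument to $V_{j-1}'$ and showing that $d^s_2\geq 2$ on at least $700\gamma|V_{j-1}'|$ vertices, so that Lemma \ref{ext-lem-M=1} applies.
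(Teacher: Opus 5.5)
Your proof is correct and is essentially the paper's argument: averaging gives a part $V_j'\cup V_j''$ of size at most $n/5-5$, and the color count at $v\in V_{j-1}'\setminus S_{j-1}(4)$ then forces $d^s_2(v)\ge 2$, so Corollary \ref{ext-cor-M=1} applies (the paper phrases the same dichotomy as ``$M>1$ or $|S_{j-1}(4)|\ge 36$'', invoking Corollary \ref{ext-cor-S(l)} in the second case). The worry in your ``Main obstacle'' paragraph is unfounded, since the count gives $d^s_2(v)\ge 6/5$, which already forces $d^s_2(v)\ge 2$; moreover, when $M=0$, Lemma \ref{ext-lem-size} would contradict the hypothesis outright rather than ``yield a much smaller part''.
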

\begin{proof}
If $|V_i'\cup V_i''|\geq n/5+ 20$, then there is $j\neq i$ such that $|V_{j}'\cup V_j''|\leq n/5 - 5$. Then for every $v\in V_{j-1}'$, $v$ has neighbors of at least seven different colors that are not in $V_{j}'\cup V_j''$. Thus either $d^s_2(v)>1$ or $|N^+_D(v)\cap V_{j+1}''|\geq 4.$ Consequently, either $M>1$ and there is an extendable $C_{4l}$ by Corollary \ref{ext-cor-M=1}, or $|S_{j-1}(4)|\geq 36$ and there is an extendable $C_{4l}$ by Corollary \ref{ext-cor-S(l)}.
\end{proof}

Let $Z_{i+2}:= \{v\in V_{i+2}'' : \exists_{x\in V_i'} v\in N_D^+(x)\}$ and let $X_{i}:= \{v\in V_{i}' : N_H(v)\cap Z_{i+1}=\emptyset\}$. Note that if for some $i$, $Z_{i+2}\neq \emptyset$ then there is an extendable $C_4$. 
We have the following lemma.

\begin{lemma}\label{ext-lem-setZ}
If for some $l\in \{1, \dots, 4\}$, $H$ has no extendable $C_{4l}$ then the following holds.
\begin{itemize}
\item[(a)]For every $i$, $|Z_{i+2}|< { 50}$.
\item[(b)] For every $i$, there is $v_i\in X_i$ such that $|N^+_D(v_i)\cap (V_{i-1}'\cup V_{i-1}'')|\leq 3.$
\end{itemize}
\end{lemma}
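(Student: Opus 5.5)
We would argue both parts by contradiction, in each case building an extendable cycle of every length $4l$, $l\in\{1,\dots,4\}$, from the assumed configuration. By Corollary \ref{ext-cor-M=1} and Corollary \ref{ext-cor-size} we may assume $M\le 1$ and $|V_j'\cup V_j''|< n/5+20$ for every $j$. By Corollary \ref{ext-cor-S(l)} we may assume $|S_j(4)|<36$.

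\emph{Part (a).} Suppose $|Z_{i+2}|\ge 50$. Each $z\in Z_{i+2}$ lies in $V_{i+2}''$ and has an in-neighbour $x_z\in V_i'$ in $D$. The key point is that these $x_z$ cannot repeat much. Since edges of $D$ leaving $x$ have distinct colors, a fixed $x\in V_i'$ sending arcs to many $z\in Z_{i+2}$ lands in $S_i(l)$ with $l$ large, because at most one of those colors equals $c_x$. Given $|S_i(4)|<36$ and that every vertex counted here lies in $S_i(1)$, a greedy or pigeonhole argument (using the bound $|S_i(l)|<(2l+1)l$ from Lemma \ref{ext-lem-S(l)}) shows that $50$ such $z$ would force $|S_i(l)|\ge(2l+1)l$ for the relevant $l$. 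Lemma \ref{ext-lem-S(l)} then produces extendable cycles of every length $4q$, $q\le l$, and the larger lengths follow by choosing $l=4$ when $36$ or more $x$'s are involved. The count $50$ should cover the worst case: at most $35$ vertices lie in $S_i(4)$, and the remaining ones each contribute at most three arcs with colors different from $c_x$. I would check that this gives enough distinct vertices in $S_i(l)$ for each $l$.

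\emph{Part (b).} First we show that $X_i$ is large. By (a), $|Z_{i+1}|<50$. Each $z\in Z_{i+1}\subseteq V_{i+1}''$ has fewer than $2|V_i'|/3$ or so neighbours in $V_i'$ by Lemma \ref{ext-set-S}, or a bounded number of neighbours by Lemma \ref{ext-lem-beta-gamma}. Hence $|V_i'\setminus X_i|$ is at most a small fraction of $|V_i'|$, and certainly $|X_i|\ge |V_i'|/2$.

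Now suppose every $v\in X_i$ has $|N^+_D(v)\cap(V_{i-1}'\cup V_{i-1}'')|\ge 4$. Each such out-neighbour $w$ gives an edge $vw$ in $D$. Exactly one of these edges has color $c_v$, namely the edge to $N^m(v)$. The others are special with distinct colors, so they contribute at least $3$ to $d^s_1(v)+d^s_2(v)$. Because $d^s_1(v)\le 1$, we get $d^s_2(v)\ge 2$ for all $v\in X_i$. Since $|X_i|\ge 700\gamma|V_i'|$, Lemma \ref{ext-lem-M=1} yields extendable $C_{4l}$ for all $l$, a contradiction.

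The one gap is that the type-1 and type-2 count must also cover out-neighbours in $V_{i-1}''$, which is the intended reading of the definition of $d^s_l$. The main obstacle is part (a): the exact counting that turns $50$ vertices of $Z_{i+2}$ into a violation of $|S_i(l)|<(2l+1)l$ for the particular $l$ with no extendable $C_{4l}$. I would first try the crude bound that each $x\in V_i'\setminus S_i(4)$ has at most $3$ out-neighbours in $V_{i+2}''$, so it covers at most $3$ vertices of $Z_{i+2}$. I would then adjust constants, possibly bounding $|S_i(1)|<3$ directly via Lemma \ref{ext-lem-S(l)}.
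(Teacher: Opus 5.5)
\textbf{Part (a) has a genuine gap.} The sets $S_i(l)$ count vertices of $V_i'$, not arcs into $Z_{i+2}$. Nothing you cite bounds $|N^+_D(x)\cap V_{i+2}''|$ for $x\in S_i(4)$. So a single vertex of $V_i'$ (or any set of fewer than $36$) could be the in-neighbour of all of $Z_{i+2}$, however large $Z_{i+2}$ is, while $|S_i(4)|<36$ still holds. Your ``worst case'' count quietly assumes that vertices of $S_i(4)$ contribute boundedly many arcs.

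The opposite regime fails too. If the in-neighbours $x_z$ are distinct with one arc each, you only populate $S_i(1)$, and Lemma~\ref{ext-lem-S(l)} then gives just an extendable $C_4$. That is useless when the missing length is $8$, $12$ or $16$. Indeed $Z_{i+2}\neq\emptyset$ already yields an extendable $C_4$, so the whole content of (a) is the case $l\ge 2$. No pigeonhole on $S_i(\cdot)$ can produce the constant $50$; note that your argument never uses $\delta^c(H)\ge (n+6)/5$.

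The missing idea is to look at $V_{i+1}'$ rather than $V_i'$. Each $z\in Z_{i+2}$ has an in-neighbour $x\in V_i'$ with $|N^+_D(x)\cap V_{i+1}'|\ge (1-2\beta)|V_{i+1}'|$, so triangle-freeness gives $|N_H(z)\cap V_{i+1}'|<2\beta|V_{i+1}'|$. Hence all but $4\beta|V_{i+1}'|$ vertices $u\in V_{i+1}'$ are non-adjacent to at least half of $Z_{i+2}$. If $|Z_{i+2}|\ge 50$, such $u$ has fewer than $(n/5+20)-25$ out-neighbours in $V_{i+2}'\cup V_{i+2}''$, by Corollary~\ref{ext-cor-size}.

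By the colour-degree bound and Fact~\ref{ext-fact2}(b), $u$ then needs at least seven out-neighbours in $V_i'\cup V_i''\cup V_{i+3}''$. Discount the one edge of colour $c_u$, use $d^s_1(u)\le 1$, and note there are at most three out-neighbours in $V_{i+3}''$ unless $u\in S_{i+1}(4)$. This gives $d^s_2(u)\ge 2$, so either $M\ge 2$ or $|S_{i+1}(4)|\ge 36$, contradicting Corollary~\ref{ext-cor-M=1} or Corollary~\ref{ext-cor-S(l)}.

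In part (b), your final step ($d^s_2\ge 2$ on $X_i$, hence extendable cycles of all lengths) is fine and matches the paper. Your reason why $X_i$ is large, however, does not work. Lemma~\ref{ext-set-S} bounds how many vertices of $V_{i+1}''$ have \emph{few} in-neighbours from $V_i'$; it says nothing that limits each $z$'s neighbourhood. Moreover, up to $49$ vertices each adjacent to as much as $2|V_i'|/3$ of $V_i'$ could cover all of $V_i'$. What you need is the same triangle-free observation: $|N_H(z)\cap V_i'|<2\beta|V_i'|$ for every $z\in Z_{i+1}$. Combined with (a), this gives $|V_i'\setminus X_i|<100\beta|V_i'|$.
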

\begin{proof}
For (a), let $x\in V_i'$ and suppose $z\in N^+_D(x) \cap Z_{i+2}.$ Then, from (\ref{eq-new}), Lemma \ref{lem-V'} and the fact that $\eta\ll \beta$, $|N^+_D(x)\cap V_{i+1}'|\geq (1-2\beta)|V_{i+1}'|$. Thus, since $H$ is triangle-free, $|N_H(z)\cap V_{i+1}'|< 2\beta |V_{i+1}'|$. Consequently,
\begin{equation}\label{eq-C16-Z}
|E_H(V_{i+1}', Z_{i+2})|< 2\beta |V_{i+1}'|\cdot |Z_{i+2}|.
\end{equation}
Let $U:=\{v\in V_{i+1}' : |N_H(v)\cap Z_{i+2}|\leq |Z_{i+2}|/2\}.$ From (\ref{eq-C16-Z}), $$|V'_{i+1}\setminus U|\cdot  |Z_{i+2}|/2 < |E_H(V_{i+1}', Z_{i+2})|< 2\beta |V_{i+1}'|\cdot |Z_{i+2}|.$$ Thus $|U|>(1-4\beta)|V_{i+1}'|>|V_{i+1}'|/2.$  By Corollary  \ref{ext-cor-size}, $|V_{i+2}'\cup V_{i+2}''|< n/5 +20$. Consequently, if $|Z_{i+2}|\geq 50$, then for every $u\in U$, $|N^+_D(u) \cap (V_{i+2}'\cup V_{i+2}'')|<(n/5+20)- { 25}$. Thus, either $M\geq 2$ and there is an extendable $C_{4l}$ by Corollary \ref{ext-cor-M=1} or $|S_{i+1}(4)|\geq 36$ and there is an extendable $C_{4l}$ by Corollary \ref{ext-cor-S(l)}.

For (b), from (\ref{eq-C16-Z}) and part (a), we have
$$|V_{i+1}'\setminus X_{i+1}|\leq |E_H(V_{i+1}', Z_{i+2})| \leq  2\beta |V_{i+1}'|\cdot |Z_{i+2}|< 100 \beta |V_{i+1}'|.$$
Thus \begin{equation}|X_{i+1}|>(1-100\beta)|V_{i+1}'|.
\end{equation} 
Let $v\in X_{i+1}$ and suppose $|N^+_D(v)\cap (V_{i}'\cup V_{i}'')|\geq 4$. Since $d^s_1(v)\leq 1$ and there is exactly one $w\in N^+_D(v)\cap (V_{i}'\cup V_{i}'')$ such that $c(vw)=c_v$, we have $d^s_2(v)\geq 2$, and so $M\geq 2$.
\end{proof}
\begin{lemma}\label{thm-general}
For every $k\in \mathbb{Z}^+$ there is $n_0\in \mathbb{Z}^+$ and $0<\eta<1$ such that for every $n\geq n_0$ the following holds. If $H$ is an edge-colored graph with $\delta^c(H)>n/5 +3$ that has no monochromatic path of length three and is such that $D_H$ is $\eta$-extremal, then $H$ contains a rainbow $C_{4k}$.
\end{lemma}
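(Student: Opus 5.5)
Since $4k \bmod 5$ takes each residue as $k$ varies, and by the extension lemma (an extendable cycle of length $l$ yields one of length $l+5j$), I would reduce the statement to finding extendable cycles of lengths $4,8,12,16$. Indeed $4k \equiv 4l \pmod 5$ for the unique $l\in\{1,2,3,4\}$ with $l\equiv k \pmod 5$, and $4k\ge 4l$. So it suffices to show that, assuming $H$ has no extendable $C_{4l}$ for this fixed $l$, we reach a contradiction. Here $\delta^c(H)>n/5+3$ means $\delta^c(H)\ge (n+16)/5>(n+6)/5$ for integer values, so all of Section~\ref{ext-subsec-2} applies. By Lemma~\ref{lem-V'} we may assume $|V_i'|\ge(1-\beta)|V_i|$. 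By Lemma~\ref{ext-lem-path} and Lemma~\ref{ext-lem2} (rainbow $C_{2k'}$ with $2k'=4l$ is in particular a cycle we can make extendable) we may assume there are no rainbow connectors and no such triple-colored edges. By Corollary~\ref{ext-cor-M=1} we may assume $M\le1$, by Corollary~\ref{ext-cor-size} that $|V_i'\cup V_i''|<n/5+20$, and by Corollary~\ref{ext-cor-S(l)} that $|S_i(4)|<36$.

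The core step is a counting argument at the vertex $v_i$ given by Lemma~\ref{ext-lem-setZ}(b). Fix $i$ and $v=v_i\in X_i$ with $|N^+_D(v)\cap(V_{i-1}'\cup V_{i-1}'')|\le 3$. By Fact~\ref{ext-fact2}(b), $N_H(v)\subseteq (V_{i-1}'\cup V_{i-1}'')\cup(V_{i+1}'\cup V_{i+1}'')\cup V_{i+2}''$. Since $v\in X_i$, the last part contributes nothing; more precisely, out-neighbours of $v$ in $V_{i+2}''$ lie in $Z_{i+2}$, and I would use that $v$ avoids $Z$ to kill these. Thus
\[
d^c(v)\le 3+|N^+_D(v)\cap(V_{i+1}'\cup V_{i+1}'')|\le 3+|V_{i+1}'\cup V_{i+1}''|.
\]
Doing this for every $i$ gives $|V_{i+1}'\cup V_{i+1}''|\ge \delta^c(H)-3>n/5$ for all $i$. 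Summing over the five parts (which partition $V(H)$) gives $n>n$, a contradiction.

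The main obstacle is making the bound of $3$ rigorous, i.e.\ verifying that no out-neighbours of $v_i$ land in $V_{i+2}''$ and that the definition of $X_i$ (phrased via $Z_{i+1}$) is aligned with this. If the indices do not match, I would instead bound $|N^+_D(v)\cap V_{i+2}''|$ using $|Z_{i+2}|<50$ together with Lemma~\ref{ext-lem-S(l)}: a vertex with $\ge 4$ colors into $V_{i+2}''$ other than $c_v$ lies in $S_i(4)$, and there are fewer than $36$ of these. Then I would pick $v_i$ outside $S_i(4)$, which is possible since $X_i$ is almost all of $V_i'$. The threshold $3$ in the hypothesis, versus the $(n+6)/5$ slack, must absorb these few extra colors, and this is where the constant $n/5+3$ is used.
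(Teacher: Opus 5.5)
Your reduction to extendable cycles of lengths $4,8,12,16$ and your choice of $v_i$ from Lemma~\ref{ext-lem-setZ}(b) match the paper. However, the counting step that closes the argument has a genuine gap.

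You claim that $v_i\in X_i$ lets you discard the out-neighbours of $v_i$ in $V_{i+2}''$. It does not. $X_i$ consists of the vertices of $V_i'$ with no neighbour in $Z_{i+1}\subseteq V_{i+1}''$. By contrast, every out-neighbour in $V_{i+2}''$ of a vertex of $V_i'$ lies in $Z_{i+2}$, by the very definition of $Z_{i+2}$. So membership in $X_i$ says nothing about these out-neighbours.

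Nor can you simply choose $v_i$ to avoid $Z_{i+2}$. When the missing length is $4$ you do get $Z_{i+2}=\emptyset$, since an edge of $D$ from $V_i'$ into $V_{i+2}''$ closes an extendable $C_4$. For a missing length $4l$ with $l\ge 2$, though, the available lemmas bound only $|Z_{i+2}|$ and $|S_i(l)|$. They do not bound the number of vertices of $V_i'$ that have a single out-neighbour in $Z_{i+2}$.

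Your fallback of picking $v_i\notin S_i(4)$ gives at best $|N^+_D(v_i)|\le 6+|V_{i+1}'\cup V_{i+1}''|$. Hence $|V_{i+1}'\cup V_{i+1}''|>n/5-3$, and summing over $i$ yields $n>n-15$, which is no contradiction. There is no slack to absorb these extra colours: the constant $3$ in $\delta^c(H)>n/5+3$ is used up entirely by the at most three out-neighbours in $V_{i-1}'\cup V_{i-1}''$.

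The missing idea is a telescoping count, in which $X_i$ is used on the $(i+1)$-st part rather than the $(i+2)$-nd. Since $v_i$ has no neighbour in $Z_{i+1}\subseteq V_{i+1}''$, and its out-neighbours in $V_{i+2}''$ all lie in $Z_{i+2}$, Fact~\ref{ext-fact2}(b) gives
\[
n/5+3<|N^+_D(v_i)|\le 3+|V_{i+1}'\cup V_{i+1}''|-|Z_{i+1}|+|Z_{i+2}|.
\]
This does not give $|V_{i+1}'\cup V_{i+1}''|>n/5$ for each $i$ separately. But when you sum over $i=0,\dots,4$, the $Z$-terms cancel cyclically and the parts partition $V(H)$, so you get $n<n$.

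A minor separate point: if $5\mid k$, there is no $l\in\{1,2,3,4\}$ with $4l\equiv 4k \pmod 5$. That case needs its own easy argument, e.g.\ winding a rainbow cycle of length $4k$ directly around $V_0',\dots,V_4'$. The paper leaves this case implicit as well.
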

\begin{proof}
Suppose $H$ has no rainbow $C_{4k}$. Then $H$ has no extendable $C_{4l}$ for some $l\in \{1, 2, 3, 4\}$. From Lemma \ref{ext-lem-setZ}(b), for $i=0,\dots, 4$ there is $v_i\in X_i$ such that $|N^+_D(v_i)\cap (V_{i-1}'\cup V_{i-1}'')|\leq 3.$ Using Fact \ref{ext-fact2}(b) and the definition of $X_i$, $$n/5 +3<|N^+_D(v_i)|\leq 3+ |V_{i+1}'\cup V_{i+1}''|+|Z_{i+2}|-|Z_{i+1}|.$$ Therefore,
$$ n<\sum_{i=0}^4(|N^+_D(v_i)|-3)\leq \sum_{i=0}^4(|V_{i+1}'\cup V_{i+1}''|+|Z_{i+2}|)-\sum_{i=0}^4|Z_{i+1}|=n$$
\end{proof}
We can now quickly prove Theorem \ref{rainbowgeneral-thm}.\\
{\it Proof of Theorem \ref{rainbowgeneral-thm}.}
Fix $k$ and suppose $n$ is large enough. Let $H$ be an edge-colored graph of order $n$ that satisfies $\delta^c(H)> n/{5}+3$, has no rainbow $C_{4k}$, and is edge-minimal. Then $H$ has no monochromatic path of length three and $\delta^+(D_H)> n/{5}+3$.
Let $0<\eta< 1$ be such that Lemma \ref{thm-general} holds if $D_H$ is $\eta$-extremal. Since $H$ has no rainbow $C_{4k}$, by  Fact \ref{fact-about-C4}, the reduced digraph $\mathcal{R}_D$ has a directed cycle of length four, and so $D_H$ contains $\Omega(n^{4k})$ directed cycles of length $4k$. Thus, by Fact \ref{fact-di-cycles}, $H$ has a rainbow $C_{4k}$.$\Box$
 \subsection{Rainbow cycles of length four}\label{ext-subsec-3}

In this section, we will prove Theorem \ref{rainbowC4-thm}. Before the proof of the theorem, we address the extremal case. To simplify we let $V_i:=V_i'\cup V_i''$.


We have the following observation that was already indicated before.
\begin{lemma}For every $i$, if $v\in V_{i}'$, then $N_D^+(v)\subseteq V_{i-1}\cup V_{i+1}$ or there is a rainbow $C_4.$ \end{lemma}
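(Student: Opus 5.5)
We need to show: for $v\in V_i'$, every out-neighbor $w$ of $v$ in $D$ lies in $V_{i-1}\cup V_{i+1}$, unless $H$ has a rainbow $C_4$. By Fact \ref{ext-fact2}(b), $N_H(v)\subseteq V_{i-1}\cup V_{i+1}\cup V_{i+2}''$. So the only case to handle is $w\in N^+_D(v)\cap V_{i+2}''$, and the plan is to build a rainbow $C_4$ through the edge $vw$.

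\textbf{Step 1: the color of $vw$.} Since $w\notin N^m(v)$ (as $N^m(v)\subseteq V_{i-1}'$), the remark after Definition \ref{def-ext-D} gives $c(vw)\neq c_v$. This matters because the cycle will close through $N^m(v)$, where all edges to $v$ have color $c_v$.

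\textbf{Step 2: stepping from $w$ into $V_{i+3}'$.} By Definition \ref{ext-def-V''}, $|N^+_D(w)\cap V_{i+3}'|\geq \gamma|V_{i+3}'|$. The edges from $w$ to its out-neighbors have pairwise distinct colors, so at most two of them use colors in $\{c_v,c(vw)\}$. Choose $u\in N^+_D(w)\cap V_{i+3}'$ with $c(wu)\notin\{c_v,c(vw)\}$.

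\textbf{Step 3: closing the cycle.} By (\ref{ext-Nm}), $N^m(v)$ contains all but $\frac{3\beta}{2}|V_{i-1}'|$ vertices of $V_{i-1}'$. Since $V_{i+3}'$ precedes $V_{i+4}'=V_{i-1}'$ cyclically, (\ref{eq-new}) gives $|N^+_D(u)\cap V_{i-1}'|\geq(1-2\beta)|V_{i-1}'|$. Hence $N^+_D(u)\cap N^m(v)$ has size at least $(1-4\beta)|V_{i-1}'|$, which is huge. Colors on out-edges of $u$ are distinct, so at most three of these vertices $x$ have $c(ux)\in\{c_v,c(vw),c(wu)\}$. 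Pick $x$ avoiding those three, and also with $x\neq w$ (automatic, since $x\in V_{i-1}'$ and $w\in V_{i+2}''$ lie in different parts of the partition).

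The cycle $vwux v$ then has colors $c(vw)$, $c(wu)$, $c(ux)$, $c_v$, which are pairwise distinct by construction. Its four vertices are distinct since they lie in different parts, namely $V_i'$, $V_{i+2}''$, $V_{i+3}'$ and $V_{i-1}'$. So it is a rainbow $C_4$, and in fact an extendable one, since $x\in N^m(v)$.

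The only potential obstacle is confirming that $N^+_D(w)\cap V_{i+3}'$ is nonempty with enough room to avoid colors. This follows directly from the membership condition defining $V_{i+2}''$, so no real difficulty is expected. The case $w\in V_i\cup V_{i+2}'$ is excluded outright by Fact \ref{ext-fact2}(a).
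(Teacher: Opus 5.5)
Your proof is correct and follows the paper's argument exactly. Fact \ref{ext-fact2}(b) reduces to an out-neighbor $w\in V_{i+2}''$, and $c(vw)\neq c_v$. You then step to $u\in N^+_D(w)\cap V_{i+3}'$ with $c(wu)\notin\{c_v,c(vw)\}$ and close the cycle through a vertex of $N^+_D(u)\cap N^m(v)$. Beyond the paper, you spell out the count showing $N^+_D(u)\cap N^m(v)$ is large enough to avoid the three used colors. That count also tacitly relies on Lemma \ref{lem-V'} to pass from $V_{i-1}$ to $V_{i-1}'$, a step the paper itself leaves implicit.
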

\begin{proof}
Suppose that $vw\in E(D)$ and $w \notin V_{i-1}\cup V_{i+1}$. Then by Fact \ref{ext-fact2}(b), $w\in V_{i+2}''$, and so $c(vw)\neq c_v$. Since $|N^+_D(w)\cap V_{i+3}'|\geq \gamma |V_{i+3}'|$, we can select $x\in N^+_D(w)\cap V_{i+3}'$ such that $c(wx)\notin \{c_v, c(vw)\}$, and then select $y\in N^+_D(x)\cap N^m(v)$ to get a rainbow cycle of length four.
\end{proof}
By Corollary \ref{ext-cor-C4}, we may assume $M=0$. We also have $\delta^+(D)=\delta^c(H)= (n+7)/5= \lfloor n/5\rfloor+2$.

\begin{lemma}\label{lem}
    For every $i$, $|V_i|\geq \lfloor n/5\rfloor$.
\end{lemma}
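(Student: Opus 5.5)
We argue by contradiction. Suppose $|V_j|\le \lfloor n/5\rfloor-1$ for some $j$, and assume throughout that $H$ has no rainbow $C_4$. We look at a typical vertex $v\in V_{j-1}'$ and count the colors on its edges. The aim is to show that $v$ must either have special edges of type 2 or send several edges into $V_{j+1}''$. Over many such $v$ this forces $M\ge 1$ or $|S_{j-1}(4)|\ge 36$, and both give an extendable (hence rainbow) $C_4$.

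\textbf{Color count at $v$.} By the preceding lemma, every $v\in V_{j-1}'$ satisfies $N_D^+(v)\subseteq V_{j-2}\cup V_j$. By the definition of $D$, exactly one out-neighbour $w$ of $v$ has $c(vw)=c_v$, and $w\in N^m(v)\subseteq V_{j-2}'$. Every other out-neighbour in $V_{j-2}'\cup V_{j-2}''$ is joined to $v$ by a special edge of color different from $c_v$. The type-1 special edges form a matching, so they contribute at most one such neighbour, and at most $d^s_2(v)$ further neighbours come from type-2 special edges. Distinct out-neighbours carry distinct colors, so
\[
\lfloor n/5\rfloor+2=\delta^+(D)\le d^+_D(v)\le |V_j|+1+d^s_1(v)+d^s_2(v)\le |V_j|+2+d^s_2(v).
\]
With $|V_j|\le\lfloor n/5\rfloor-1$ this gives $d^s_2(v)\ge 1$ for every $v\in V_{j-1}'$.

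\textbf{Applying Corollary~\ref{ext-cor-C4}.} Take $X:=V_{j-1}'$. Every $x\in X$ has $d^s_2(x)\ge 1=\max\{M,1\}$, because $M=0$. Then Corollary~\ref{ext-cor-C4} yields an extendable $C_4$, and in particular a rainbow $C_4$, which is a contradiction. Alternatively, $d^s_2\ge 1$ on all of $V_{j-1}'$ already gives $M_{j-1}\ge 1$, contradicting $M=0$ directly.

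\textbf{Main obstacle.} The delicate step is the bookkeeping in the color count. The preceding lemma only confines $N^+_D(v)$ to $V_{j-2}\cup V_{j}$, so I must check that no out-neighbour lands in $V_{j+1}''$; the lemma excludes this. I must also make sure the edge of color $c_v$ is counted exactly once: Definition~\ref{def-ext-D} guarantees its endpoint lies in $N^m(v)$, so it is not special. If the constant comes out one short, I would add the observation that for $n\equiv 3 \pmod 5$ we have $(n+7)/5=\lfloor n/5\rfloor+2$ exactly. I would also invoke Corollary~\ref{ext-cor-size} to rule out overly large neighbouring classes, though the count above does not seem to need it.
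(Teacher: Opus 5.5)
Your proof is correct and is essentially the paper's argument. The paper also counts out-neighbours of $v\in V_{j-1}'$: it uses $d^s_1(v)\le 1$, the single edge of color $c_v$, and the at most $|V_j|\le\lfloor n/5\rfloor-1$ forward neighbours to force $d^s_2(v)\ge 1$ on all of $V_{j-1}'$, hence $M\ge 1$, contradicting $M=0$. As a side remark, $\delta^c(H)\ge\lceil (n+7)/5\rceil\ge\lfloor n/5\rfloor+2$ holds for every residue of $n$ by integrality, so your contingency about $n\equiv 3\pmod 5$ is unnecessary.
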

\begin{proof}
  Since $\delta^c(H)\geq \lfloor n/5\rfloor+2$, if there is $j$ such that 
$|V_j|\leq \lfloor n/5\rfloor-1$, then for every $v\in V_{j-1}'$, $d^s_2(v)\geq \frac{n+7}{5}- (\lfloor n/5\rfloor-1) -1 -d^s_1(v)> 0$, since $d^s_1(v)\leq 1$. Thus, $M \geq 1$, a contradiction.
\end{proof}
By Lemma \ref{ext-lem-path}, we may assume that there is no rainbow connector.
\begin{lemma}\label{ext-plus-edges}
Let $i\in \{0, \dots, 4\}$, let $w\in V(H)$ be such that $|N^+_D(w)\cap V_i'|\geq 4$, and suppose there is no rainbow connector in $H$. Then all but at most three vertices $u\in N^+_D(w)\cap V_i'$ satisfy $c(wu)=c_u$. 
\end{lemma}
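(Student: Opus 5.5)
The plan is to argue by contradiction using Fact \ref{ext-fact4.5}(a). Let $T$ be the set of vertices $u\in N^+_D(w)\cap V_i'$ with $c(wu)\neq c_u$, and suppose $|T|\geq 4$. I want four vertices $x_1,\dots,x_4\in T$ such that the colors $c(wx_1),\dots,c(wx_4)$ are pairwise different and $c(wx_j)\neq c_{x_j}$ for every $j$. Once I have them, Fact \ref{ext-fact4.5}(a) with $z=w$ gives a rainbow $x,y$-connector for some $x,y\in V_i'$, which contradicts the standing assumption.

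The main step is to check that these hypotheses hold for every choice of four vertices of $T$.

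\begin{itemize}
\item \textbf{Distinct colors.} By the construction of $D_H$, the out-edges of $w$ in $D_H$ have pairwise distinct colors. The modification in Definition \ref{def-ext-D} only replaces one out-edge $ww_w$ by an edge $wu_w$ of the same color $c_w$, so it preserves this property. Hence all edges $wu$ with $u\in N^+_D(w)$ have pairwise distinct colors, and in particular so do the edges from $w$ to any four vertices of $T$.
\item \textbf{Avoiding $c_{x_j}$.} The condition $c(wx_j)\neq c_{x_j}$ holds by the definition of $T$.
\end{itemize}

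The only thing that needs care is making sure Fact \ref{ext-fact4.5}(a) applies verbatim. The edges $wx_j$ are edges of $H$, and the connector produced there is $x_1wx_2$ with both endpoints in $V_i'$. Its proof uses only that the four edge colors at $z$ are distinct, that they differ from the corresponding $c_{x_j}$, and that $c_{x_1}\neq c_{x_2}$, which is Fact \ref{ext-fact3}. Nothing in it depends on where $z$ lies, so $w$ may be any vertex of $H$.

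It follows that $|T|\leq 3$, which is the statement. I expect no real obstacle here; the argument is a direct application of Fact \ref{ext-fact4.5}(a) together with the distinct-colors property of out-edges in $D$.
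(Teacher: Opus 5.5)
Your proof is correct and is essentially the paper's argument. The paper inlines the reasoning of Fact \ref{ext-fact4.5}(a) instead of citing it: it picks a bad $u_1$, then two further vertices $u_2,u_3$ whose edge colors to $w$ avoid $c_{u_1}$, and uses $c_{u_2}\neq c_{u_3}$ to obtain a rainbow $u_1,u_2$-connector. Your check that the modification in Definition \ref{def-ext-D} keeps the out-edge colors at $w$ pairwise distinct is a valid justification of the fact that the paper simply recalls.
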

\begin{proof}
Suppose otherwise and recall that all edges in $D$ from $w$ to $N^+_D(w)$ have different colors. Let $u_1\in N^+_D(w)\cap V_i'$ be such that $c(wu_1)\neq c_{u_1}$. Then there exist $u_2, u_3 \in (N^+_D(w)\cap V_i') \setminus \{u_1\}$ such that $c_{u_1}\notin \{c(wu_2), c(wu_3)\}$. Without loss of generality, $c_{u_2}\neq c(wu_1)$. Then $u_1wu_2$ is a rainbow $u_1,u_2$-connector.
\end{proof}
We are now ready to prove the main lemma.
\begin{lemma}\label{lem-C4-extremal}
There is $n_0$  and $0<\eta<1$ such that for every $n\geq n_0$ the following holds. If  $H$ is an edge-colored graph on $n$ vertices with $\delta^c(H)\geq (n+7)/5$ that has no monochromatic path of length three and $D_H$ is $\eta$-extremal, then $H$ contains a rainbow cycle of length four.
\end{lemma}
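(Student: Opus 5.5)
We argue by contradiction, assuming $H$ has no rainbow $C_4$, and reduce to a counting contradiction as in the proof of Lemma \ref{thm-general}. The margin of $+3$ there is now unavailable: we only have $\delta^+(D)=\lfloor n/5\rfloor+2$. So the count must be made exact, using the structure of Example \ref{ex-C4}.

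\textbf{Step 1 (global structure).} By Lemma \ref{ext-lem-path} there are no rainbow connectors. By Corollary \ref{ext-cor-C4} we have $M=0$. By Lemma \ref{ext-set-S}, Lemma \ref{ext-lem-B_i} and Lemma \ref{ext-lem-setZ}, the sets $Z_{i+2}$ are empty, since a vertex of $Z_{i+2}$ immediately yields an extendable $C_4$. Hence for $v\in V_i'$ we get $N^+_D(v)\subseteq V_{i-1}\cup V_{i+1}$ by the preceding lemma. Since $M=0$, almost every $v\in V_i'$ has $d^s_2(v)=0$. For such $v$, the out-neighbours in $V_{i-1}$ consist of the unique vertex $w$ with $c(vw)=c_v$, possibly one special edge of type 1, and nothing else.

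\textbf{Step 2 (sizes).} We have $|N^+_D(v)|\geq \lfloor n/5\rfloor+2$, and $N^+_D(v)\cap V_{i-1}$ has size at most $2$. Therefore $|V_{i+1}|\geq \lfloor n/5\rfloor$, consistent with Lemma \ref{lem}. Moreover, if $|V_{i+1}|=\lfloor n/5\rfloor$, then a typical $v\in V_i'$ sees all of $V_{i+1}$ and has a type-1 special edge, so the type-1 matching between $V_{i-1}'$ and $V_i'$ is almost perfect. Write $n=5\lfloor n/5\rfloor+r$ with $r\leq 4$. Since $(n+7)/5$ must be at most the color degree, $r$ is forced to a specific residue, and at least one index has $|V_{j}|=\lfloor n/5\rfloor$. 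In fact, the assumption $\delta^c\geq (n+7)/5$ forces $n\equiv 3 \pmod 5$, so $r=3$. We then have at least two indices $j$ with $|V_j|=\lfloor n/5\rfloor$, hence at least two almost perfect type-1 matchings, say between $V_{j-2},V_{j-1}$ and between $V_{j'-2},V_{j'-1}$.

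\textbf{Step 3 (building the $C_4$).} This is the main obstacle. We take a type-1 edge $uv$ with $u\in V_{j-2}'$, $v\in V_{j-1}'$ and $c_u=c_v=:\alpha$, and $c(uv)=\beta\neq\alpha$. We then look for a path $v\,x\,y\,u$ of length three with colors avoiding $\alpha,\beta$. Mimicking Example \ref{ex-C4}, the natural candidates are $x\in N^m(\cdot)$-type vertices in $V_{j-2}'$ and $y\in V_{j-1}'$ with $c(vx)=c_x$ and $c(yu)=c_y$; these are distinct colors by Fact \ref{ext-fact3}. The edge $xy$ must then avoid $\{\alpha,\beta,c_x,c_y\}$, so $xy$ must be a special or type-1 edge with its own color. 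In the example this fails only because there is a single matching. With two matchings, or with the extra unit of color degree, we will find a vertex whose degree count from Step 2 forces either a special edge of type 2 (contradicting $M=0$ for many vertices, or giving a connector via Fact \ref{ext-fact4.5}) or a second type-1 edge $xy$ with $c(xy)\notin\{\alpha,\beta\}$, which closes the rainbow $C_4$ $uvxy$ (after rechecking adjacency via Fact \ref{ext-fact2}).

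The delicate point is handling the $O(\beta n)$ exceptional vertices outside the "typical" sets, together with the vertices of $V_i''$. We plan to control these with Lemma \ref{ext-lem-beta-gamma}, Lemma \ref{ext-lem-Mdegs} and Lemma \ref{ext-plus-edges}. The aim is that every vertex, including the vertices of $V_i''$, obeys the same exact degree bound, so that summing $|N^+_D(v_i)|-2$ over representatives $v_i$ of the five classes, as in the proof of Lemma \ref{thm-general}, exceeds $n$ unless the two-matching configuration above occurs. That configuration then produces the rainbow $C_4$.
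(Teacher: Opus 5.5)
Your proposal has a genuine gap; in fact there are two.

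\textbf{The residues other than $3$.} The assertion that $\delta^c(H)\geq (n+7)/5$ forces $n\equiv 3 \pmod 5$ is false. The hypothesis only gives $\delta^c(H)\geq \lfloor n/5\rfloor+2$, for every residue, and the lemma must hold for all $n\geq n_0$. So the residues $0,1,2,4$ are dismissed without argument.

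Your summation fallback cannot handle them either. For a typical $v_i\in V_i'$ you only know $|N^+_D(v_i)|\leq 2+|V_{i+1}|$: one edge of colour $c_{v_i}$ plus at most one type-1 special edge. Summing gives $5\lfloor n/5\rfloor\leq n$, which is never a contradiction.

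The missing idea is the paper's Observation~\ref{obs1}. Suppose $xy$ is special of type 1 with $x\in V_j'$ and $y\in V_{j-1}'$. Then every edge $zy$ with $z\in V_{j-2}$ has a colour in $c(E_H(y,V_j))\cup\{c_y\}$. Otherwise $xyzw$, with $w\in N^+_D(z)\cap N^m(x)$, is a rainbow $C_4$. Hence $d^c(y)\leq |V_j|+1$, i.e. $|V_j|\geq (n+2)/5$.

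Combine this with Observation~\ref{obs2}. If $|V_{i+1}|\leq (n+1)/5$, every vertex of $V_i'$ has a special edge, and by Corollary~\ref{ext-cor-C4} some of these are of type 1. Therefore no two cyclically consecutive classes can both have size at most $(n+1)/5$. Since every $|V_i|\geq\lfloor n/5\rfloor$, this disposes of every residue except $3$. For $n\equiv 3$ it pins the sizes to $(n+2)/5,(n-3)/5,(n+2)/5,(n-3)/5,(n+2)/5$.

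\textbf{Step 3 is not an argument.} The mechanism you suggest does not work as stated. Closing $uvxy$ from two type-1 edges needs their colours to differ. In Example~\ref{ex-C4}, however, all type-1 edges share the single colour $\alpha$, and you give no reason why one extra unit of colour degree changes this. Also, for $v\in V_{j-1}'$ and $x\in V_{j-2}'$ the typical colour of $vx$ is $c_v$, not $c_x$.

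In the paper, the contradiction in the residual configuration comes from the size mismatch $|V_2|=(n+2)/5>|V_1|=(n-3)/5$. This needs exact information about every vertex; an almost perfect matching is not enough. The argument runs as follows.
\begin{enumerate}
\item One first shows that every $z\in V_i$, including $z\in V_i''$, satisfies $N_H(z)\subseteq V_{i-1}\cup V_{i+1}$ and has a dominant in-colour $c_z$. These colours are distinct within a class.
\item Then every $x\in V_2$ has an edge $xu_x$ to $V_1$ with $c(xu_x)\neq c_x$, because $|V_3|=(n-3)/5$ leaves at least two colours on edges to $V_1$.
\item Pigeonhole gives $w\in V_1$ with $w=u_x=u_y$ for distinct $x,y\in V_2$.
\item A short case analysis on $c_w$, $c(wx)$, $c(wy)$ and an edge $wz$ with $z\in V_0$ then yields a rainbow $C_4$.
\end{enumerate}
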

\begin{proof}
Suppose $H$ has no rainbow cycle of length four.
We start with two  simple observations.
\begin{obs}\label{obs1}
   Suppose $xy$ is a special of type 1 with $x\in V_{j}'$ and $y\in V_{j-1}'$. Then for every edge $zy$ with $z\in V_{j-2}$, $c(zy)\in c(E_H(y, V_j))\cup \{c_y\}$. In particular, if $|V_{j}|\leq (n+1)/5$, then there cannot be a special edge $xy$ of type 1 with  $x\in V_j'$ and $y\in V_{j-1}'.$ 
\end{obs}
Otherwise,  for some $w\in N^+_D(z)\cap N^m(x)$,  $xyzw$ is a  rainbow $C_4$. 
\begin{obs}\label{obs2}
    Suppose there is a set $U\subseteq V_j'$ such that $|U|\geq |V_j'|/10$ and  for each vertex $u\in U$, $d^s_1(u)+d^s_2(u)\geq 1$. Then there is a special edge $xy$ of type 1 with $x\in U$ and $y\in V_{j-1}'$.
\end{obs}
Indeed, otherwise, since $\gamma \ll 1$, by Corollary \ref{ext-cor-C4}, there is  a rainbow $C_4$. \\

If there is an $i\in \{0,\dots, 4\}$ such that $|V_i|= |V_{i+1}|\leq  \frac{n+1}{5}$, then for every $u\in V_i'$, $d^s_1(u)+d^s_2(u)\geq 1$. Thus, by Obs. \ref{obs2}, there is a special edge $xy$  of type 1 with $x\in V_i'$, $y\in V_{i-1}'$ contradicting  Obs. \ref{obs1}.

Let $a:= n \bmod 5$. If  $a \in \{0, 1, 2, 4\}$, then since by Lemma \ref{lem}, for every $j$, $|V_j|\geq \lfloor n/5 \rfloor$  there is an $i$ such that $|V_i|= |V_{i+1}|\leq  \frac{n+1}{5}$. 

Suppose $a= 3$ and there is no index $i$ such that $|V_i|= |V_{i+1}|\leq  \frac{n+1}{5}$. 
Then, without loss of generality, $|V_0|= |V_2|= |V_4|= \frac{n+2}{5}$ and $|V_1|=|V_3|= \frac{n-3}{5}$.  

\begin{claim}\label{extC4-cl1}
    Let $z\in V_i$. Then $|N^-_D(z)\cap V_{i-1}'|\geq 3|V_{i-1}'|/4.$
\end{claim}
\begin{proof}
    We may assume $z\in V_i''$.  Let $U:= V_{i-1}'\setminus N^-_D(z)$ and suppose $|U|\geq |V_{i-1}'|/4.$ If $u\in U$, then either $uz\notin E(H)$ or $u\in N^+_D(z).$ By Lemma \ref{ext-plus-edges}, deleting at most three vertices, gives a set $U$ such that for every $u\in U$ either $uz\notin H$ or $c(uz)=c_u$. 
    
    If $|V_i|=\frac{n-3}{5}$ then for every $u\in U$, $d_s^1(u)+d_s^2(u)\geq 2$, contradicting $M=0$. Otherwise, $|V_i|=\frac{n+2}{5}$, $i\in \{0,2,4 \}$, and for every $u\in U,$ $d_s^1(u)+d_s^2(u)\geq 1$. Thus by Obs. \ref{obs2} there is a special edge $xy$ of type 1 with $x\in U$ and $y\in V_{i-2}'$.
    If $i=2,4$, then $|V_{i-1}|=\frac{n-3}{5}$ which contradicts Obs. \ref{obs1} (See Figure \ref{fig:Claim4-45}$(i)$). 
    
    If $i=0$, then $U\subseteq V_4'$. By Corollary \ref{ext-cor-C4}, for all but at most $500\gamma |V_0'|$ vertices $v\in V_0'$, $vx$ is special of type 1 for some $x\in V_4'$. Since special edges of type 1 form a matching, there is a special edge $vu$ of types 1 with $u\in U$ and $v\in V_0'$. Since $N^+_D(u)\cap V_0$ does not contain $z$, there is a vertex $y\in V_3$ such that $yu\in E(H)$ and $c(yu)\notin c(E_H(u, V_0)\cup \{c_u\}$, contradicting Obs. \ref{obs1}  (See Figure \ref{fig:Claim4-45}$(ii)$).
    \begin{figure}[h] 
    \centering
    \includegraphics[width=0.8\textwidth]{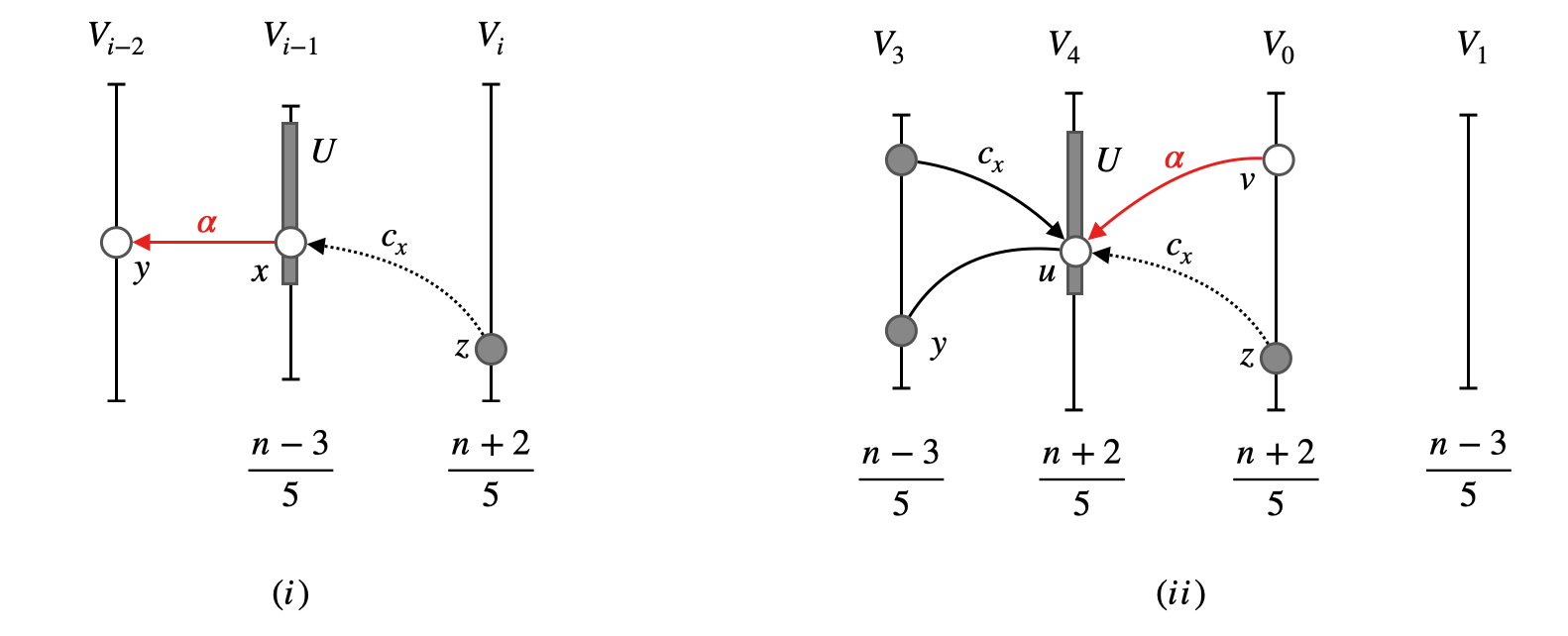} 
    \caption{Two cases in the proof of Claim \ref{extC4-cl1}}
    \label{fig:Claim4-45}
\end{figure}
\end{proof}
\begin{claim}\label{extC4-cl2}
    Let $z\in V_i$. Then $|N^+_D(z)\cap V_{i+1}'|>|V_{i+1}'|/2.$
\end{claim}
\begin{proof}
We may assume $z\in V_{i}''$. Since $H$ is triangle-free, and $\beta \ll \gamma$, $N_H(z)\cap (V_i'\cup V_{i+2}')=\emptyset$. By Claim \ref{extC4-cl1}, $N_H(z)\cap V_{i+3}'=\emptyset$ and $|N^+_D(z)\cap V_{i-1}'|<|V_{i-1}'|/4$. Finally,  by (\ref{eq-V_i''}), we have $|\bigcup V_j''|\leq 6\beta n$.
\end{proof}
\begin{claim}\label{extC4-cl3}
    Let $z\in V_i$. Then $N_H(z)\subseteq V_{i-1}\cup V_{i+1}.$
\end{claim}
\begin{proof}
If $u\in V_j$ for $j\notin \{i-1,i+1\}$, then by Claim \ref{extC4-cl1}  and Claim \ref{extC4-cl2} applied to both, $z$ and $u$, $u,z$ have a common neighbor.
\end{proof}
\begin{claim}\label{extC4-cl3}
    Let $z\in V_i''$. Then there is a color $c_z$ such that for all but at most one vertex $x\in N^-_D(z)\cap V_{i-1}'$, $c(xz)=c_z$.
\end{claim}
\begin{proof}
Suppose there exist two vertices  $x_1, x_2\in  N^-_D(z)\cap V_{i-1}'$  such that $c(x_1z)=c(x_2z)$. By the definition of $D$, we have  $c(x_iz)\neq c_{x_i}$. There is at most one vertex $y \in N^-_D(z)\cap V_{i-1}'$ such that $c_y= c(x_1z)$. For any other $v \in N^-_D(z)\cap V_{i-1}'$, $c_v\neq c(x_1z)$. Thus either there is a rainbow $x_1,v$-connector, or $c(vz)\in \{c_{x_1}, c(x_1z)\}$. If $c(vz)=c_{x_1}$, then there is a rainbow $x_2,v$-connector.  Since $v$ is any other vertex, there exist two vertices $x_1, x_2$ such that  $c(x_1z)=c(x_2z)$, and so, we must have for every $v$ but at most one exception, $c(vz)= c(x_1z).$
\end{proof}
Since there is no monochromatic path of length three, for any $x,y\in V_i$, $c_x\neq c_y.$

Recall that $|V_2|=\frac{n+2}{5}$ and $|V_1|=|V_3|=\frac{n-3}{5}$. Thus for every vertex $x\in V_2$, $x$ is incident to an edge $xu_x$ such that $c(xu_x)\neq c_x$ and $u_x\in V_1$. Since $|V_2|>|V_1|$, there is $w\in V_1$ such that $w=u_x$ and $w=u_y$ for two different vertices $x, y\in V_2$.  Without loss of generality, $c_w\neq c_x$. Then $c(wx)=c_w$ or there is a rainbow $C_4$. If $c(wy)=c_w$, then let $z\in V_0$ be such that $c(wz)\neq c_w$. Without loss of generality $c(wz)\neq c_x$ and we can select a vertex in  $N^m(x)\cap N^+_D(z)$ to get a rainbow $C_4$. If $c(wy)\neq c_w$, then we have $c_w=c_y$, since otherwise we get a rainbow $C_4$. Since $c(wx)=c_w$, there is $z\in V_0$ such that $c(wz)\notin \{c_y, c(wy)\}$. Now select a vertex in $N^+_D(z)\cap N^m(y)$ to get a rainbow $C_4$.
\end{proof}
{\it Proof of Theorem \ref{rainbowC4-thm}.}
Suppose $n$ is large enough and $H$ is an edge-colored graph of order $n$ that satisfies $\delta^c(H)\geq \frac{n+7}{5}$, has no rainbow $C_4$, and is edge-minimal. Then $H$ has no monochromatic path of length three and $\delta^+(D_H)\geq \frac{n+7}{5}$.
Set $0<\eta< 1$ so that Lemma \ref{lem-C4-extremal} holds if $D_H$ is $\eta$-extremal. Since $H$ has no rainbow $C_4$, by  Fact \ref{fact-about-C4}, the reduced digraph $\mathcal{R}_D$ has a directed cycle of length four, and so $D_H$ contains $\Omega(n^4)$ directed cycles of length four. Thus, by Fact \ref{fact-di-cycles}, $H$ has a rainbow $C_4$.$\Box$
\bibliographystyle{siam}
\bibliography{references}

\end{document}